\newif\iffull\fulltrue

\iffull
\documentclass[11pt]{article}
\else
\documentclass[cleveref,english]{socg-lipics-v2021}
\fi

\usepackage[T1]{fontenc}
\usepackage[utf8]{inputenc}

\iffull
\usepackage[margin=1in]{geometry}
\fi

\usepackage{amsfonts,amsmath,amssymb,amsthm}
\usepackage{cite,microtype,graphicx,hyperref}

\iffull
\usepackage[capitalize,nameinlink]{cleveref}

\newtheorem{theorem}{Theorem}
\newtheorem{lemma}{Lemma}
\newtheorem{corollary}[lemma]{Corollary}
\newtheorem{observation}[lemma]{Observation}

\theoremstyle{definition}
\newtheorem{definition}{Definition}
\newtheorem{example}{Example}

\fi

\title{Non-Euclidean Erd\H{o}s--Anning Theorems}

\iffull
\author{David Eppstein\thanks{Department of Computer Science, University of California, Irvine. Donald Bren Hall, Irvine, CA 92697, USA. eppstein@uci.edu.}}
\else
\author{David Eppstein}{University of California, Irvine \and\url{https://www.ics.uci.edu/~eppstein/}}{eppstein@uci.edu}{}{Research supported in part by NSF grant CCF-2212129.}
\authorrunning{D. Eppstein}
\Copyright{David Eppstein}

\begin{CCSXML}
<ccs2012>
<concept>
<concept_id>10003752.10010061.10010063</concept_id>
<concept_desc>Theory of computation~Computational geometry</concept_desc>
<concept_significance>500</concept_significance>
</concept>
</ccs2012>
\end{CCSXML}

\ccsdesc[500]{Theory of computation~Computational geometry}

\keywords{integer distances, additively weighted Voronoi diagrams, convex distance functions, Riemannian manifolds, geodesic distance}

\relatedversiondetails{Full
Version}{https://arxiv.org/2401.06328}

\EventEditors{Oswin Aichholzer and Haitao Wang}
\EventNoEds{2}
\EventLongTitle{41st International Symposium on Computational Geometry
(SoCG 2025)}
\EventShortTitle{SoCG 2025}
\EventAcronym{SoCG}
\EventYear{2025}
\EventDate{June 23--27, 2025}
\EventLocation{Kanazawa, Japan}
\EventLogo{socg-logo.pdf}
\SeriesVolume{332}
\ArticleNo{XX}
\fi

\begin{document}
\maketitle
\begin{abstract}
The Erd\H{o}s--Anning theorem states that every point set in the Euclidean plane with integer distances must be either collinear or finite. More strongly, for any (non-degenerate) triangle of diameter~$\delta$, at most $O(\delta^2)$ points can have integer distances from all three triangle vertices. We prove the same results for any strictly convex distance function on the plane, and analogous results for every two-dimensional complete Riemannian manifold of bounded genus and for geodesic distance on the boundary of every three-dimensional Euclidean convex set. As a consequence, we resolve a 1983 question of Richard Guy on the equilateral dimension of Riemannian manifolds. Our proofs are based on the properties of additively weighted Voronoi diagrams of these distances.
\end{abstract}

\section{Introduction}

Many incidence properties of lines or curves, seemingly mysterious on their own, become obvious when viewed through the lens of computational geometry, where the curves may be features of Voronoi diagrams or related constructions. Thus, for instance, to see that the three perpendicular bisectors of a triangle's sides meet at a single point, observe that these bisectors contain the edges of the Voronoi diagram of the triangle vertices, and these Voronoi edges meet at the Voronoi vertex. In this work we apply the same principle to the Erd\H{o}s--Anning theorem on integer distances in the plane, conventionally proven using the algebraic intersection properties of hyperbolae. By reinterpreting these hyperbolae and their intersection points as the cell boundaries and vertices of additively-weighted Voronoi diagrams, and by analyzing these diagrams geometrically and topologically rather than algebraically, we greatly generalize this theorem, to several wide classes of two-dimensional metric spaces. In doing so, we resolve a 1983 question of Richard Guy on the equilateral dimension of Riemannian manifolds, the maximum number of points that can be placed at equal distances from each other in these spaces.

The Erd\H{o}s--Anning theorem states that every point set in the Euclidean plane with integer distances must be either collinear or finite~\cite{AnnErd-BAMS-45,Erd-BAMS-45}. More strongly, for any (non-degenerate) triangle of diameter $\delta$, at most $O(\delta^2)$ points can have integer distances from all three triangle vertices. When the whole point set has diameter $D$, its number of points is $O(D)$~\cite{Sol-DCG-03}, and if in addition it is non-collinear its number of points is $D^{O(1/\log\log D)}$~\cite{GreIliPel-24}. In contrast, as Euler already observed, there exist infinite non-collinear point sets of bounded diameter with all distances rational, for instance, dense subsets of a unit circle~\cite{Eul-FxA-62,Har-CH-98}. 

There are many metrics other than Euclidean distance for which we might ask similar questions. One obstacle to generalizing the Erd\H{o}s--Anning theorem to other metrics is that its usual proof is heavily based on algebraic geometry. In the Euclidean plane, the locus of points whose distances to two given points differ by a fixed integer is a branch of a hyperbola. Therefore, the points with integer distances to three non-collinear points lie at the intersection points of two finite families of hyperbolas. Hyperbolas are algebraic curves of degree two, and by Bezout's theorem two hyperbolas have at most four crossings, so these two finite families have finitely many intersection points. But it is not obvious how to extend this argument to metrics for which the corresponding curves are more complicated or non-algebraic. Erd\H{o}s and Anning originally used a different argument using trigonometric inequalities~\cite{AnnErd-BAMS-45}, but its generalization is also non-obvious.

A second obstacle is that analogous statements are untrue for some metrics. For the $L_1$ or $L_\infty$ distances in the plane, the integer lattice provides a familiar example of an infinite set that is not collinear and has only integer distances. Thus, generalizations of the theorem to other families of metrics must avoid families that include $L_1$ or $L_\infty$.

In this paper we prove the following generalized versions of the Erd\H{o}s--Anning theorem:

\begin{quotation}
\noindent\textit{For any strictly convex distance function on $\mathbb{R}^2$, every point set with integer distances must be either collinear or finite. For any (non-degenerate) triangle of diameter $\delta$, at most $O(\delta^2)$ points can have integer distances from all three triangle vertices. If a set with integer distances has diameter $D$, it has at most $O(D)$ points.} (\cref{sec:dist-fun})

\smallskip
\noindent\textit{For any two-dimensional complete Riemannian manifold of finite genus $g$, every point set with integer distances must either be finite or be contained in a geodesic (a locally-shortest curve) that contains every shortest curve between pairs of the points. If some three points do not lie on a single shortest curve, and have diameter $\delta$, at most $O((g+1)\delta^2)$ points can have integer distances from these three points.} (\cref{sec:riemann})

\smallskip
\noindent\textit{For geodesic distance on the boundary of a convex set in $\mathbb{R}^3$, every point set with integer distances must either be finite or be contained in a geodesic that contains every shortest curve between pairs of the points. If some three points do not lie on a single shortest curve, and have diameter $\delta$, at most $O(\delta^2)$ points can have integer distances from these three points. If a set with integer distances has diameter $D$, it has at most $O(D^{4/3})$ points.} (\cref{sec:convex})
\end{quotation}

Our proofs replace the intersection properties of low-degree algebraic curves with the geometric and topological properties of additively weighted Voronoi diagrams. Our results for Riemannian manifolds resolve a question posed in 1983 by Richard Guy~\cite{Guy-AMM-83}, on the \emph{equilateral dimension} of Riemannian manifolds, the maximum number of points at unit distance from each other in these manifolds. Guy asked whether the equilateral dimension could be bounded by a function of dimension. For complete Riemannian 2-manifolds of bounded genus, we bound  the equilateral dimension by a function of the genus (\cref{cor:equilateral-dimension}) but for locally Euclidean incomplete Riemannian 2-manifolds,  for complete Riemannian 2-manifolds of unbounded genus, and for complete Riemannian metrics on~$\mathbb{R}^3$, we show that the equilateral dimension is unbounded (\cref{ex:infinite-cone} and \cref{ex:high-genus}).

\iffull
\else
For space reasons we defer many lemmas and other material to the full version at \url{https://arxiv.org/2401.06328}.
\fi

\section{Preliminaries: Voronoi diagrams and star-shaped cells}

A \emph{Voronoi diagram} of a finite set of point \emph{sites} in a metric space $(X,d)$ subdivides the space according to which site is closest. Each site $s_i$ has a (closed) cell $V_i$ associated with it, where
\[
V_i=\left\{ p\in X \biggm\vert d(p,s_i)\le \min_{j\ne i} d(p,s_j)\right\}
\]
is the set of points whose distance to site $s_i$ is less than or equal to the distance to any other site $s_j$. We will only use Voronoi diagrams of finite sets (in fact, of at most three sites), so we only define and discuss the properties of these diagrams for finite sets of sites.

The intersection of any collection of Voronoi cells is the locus of points having equal distances to their sites. When $X$ has the topology of the plane, the \emph{bisector} of a pair of sites (the intersection of their Voronoi cells) is often (but not always) a curve, and a non-empty intersection of three or more sites is often (but not always) an isolated point or points, called \emph{Voronoi vertices}. We will use \emph{additively weighted} Voronoi diagrams, in which each site $s_i$ has a weight $w_i$ associated with it, and in which the distances in the definition of cells are modified by adding these weights:
\[
V_i=\left\{ p\in X \biggm\vert d(p,s_i)+w_i\le \min_{j\ne i} d(p,s_j)+w_j\right\}.
\]
Voronoi diagrams have been the object of intensive study, including Voronoi diagrams for convex distance functions~\cite{CheDry-SoCG-85,IckKleLe-FI-95,Le-IPL-97}, for points on a sphere~\cite{Mil-San-71,AugPes-JCP-85,NaLeeChe-CGTA-02}, for points in the hyperbolic plane~\cite{DevMeiTei-CCCG-92,NieNoc-ICCSA-10,BogDevTei-JoCG-14}, orbifolds~\cite{MazRec-CGTA-97}, and for more general Riemannian manifolds~\cite{LeiLet-SoCG-00}. Additively weighted Voronoi diagrams are also a standard concept~\cite{CheDry-SICOMP-81,MenEmi-SODA-03}, and confusingly, are sometimes called \emph{hyperbolic Dirichlet tessellations} despite their use of Euclidean distance~\cite{AshBol-GD-86}, because their cells are bounded by hyperbolas (the same hyperbolas from the Erd\H{o}s--Anning proof). We did not find works combining additive weights and non-Euclidean distances.

\begin{figure}[t]
\centering\includegraphics[width=0.6\textwidth]{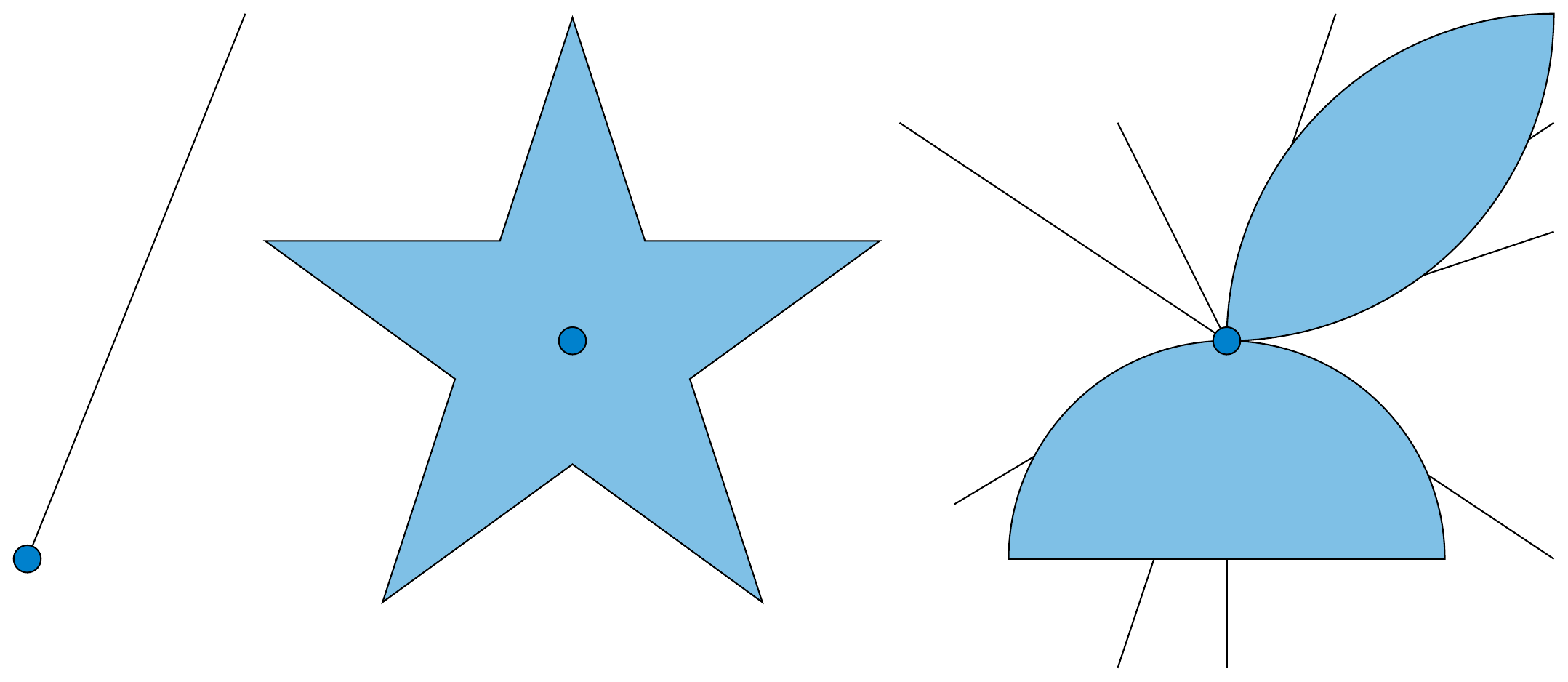}
\caption{Three star-shaped sets, each shown with a point in its kernel.}
\label{fig:star-shaped}
\end{figure}

The \emph{kernel} of a subset $S$ of the plane is defined as the set of points $p$ such that, for every point~$q$ in $S$, the line segment $pq$ belongs to $S$. $S$ is convex if and only if its kernel coincides with $S$. Weakening this formulation of convexity, $S$ is defined to be \emph{star-shaped} if its kernel is non-empty. (See \cref{fig:star-shaped}. Note that, according to this definition, star-shaped sets are not required to form topological disks, and the empty set is not star-shaped.) It is known that, for instance, the Voronoi cells of unweighted convex distance functions are star-shaped topological disks~\cite{CheDry-SoCG-85}. For additively weighted Voronoi diagrams, we should be a little more careful, because of the possibility that a Voronoi cell can degenerate to a line segment, a single point, or the empty set, but in the Euclidean case they are again known to be star-shaped when non-empty~\cite{AshBol-GD-86}.

\section{Convex distance functions}
\label{sec:dist-fun}

A \emph{convex distance function} on $\mathbb{R}^d$ can be defined from any centrally symmetric compact convex body~$K$. The resulting function $d_K(p,q)$ is the smallest scale factor $s$ such that a copy of $K$, scaled by~$s$ and centered at~$p$, contains~$q$. Any convex distance function defines a norm on $\mathbb{R}^d$, $||p||_K=d_K(0,p)$. In the reverse direction, the closed unit ball $\{p\mid||p||\le 1\}$ of any norm $||\cdot||$ can be used to define a convex distance function. A convex set is \emph{strictly convex} if its boundary does not contain any line segment, and a \emph{strictly convex distance function} is a convex distance function defined from a strictly convex set. Convex distance functions obey all the requirements of a metric space, including the triangle inequality. For strictly convex distance functions and non-collinear triples of points $p$, $q$, $r$, they obey a strict form of the triangle inequality~\cite{CheDry-SoCG-85}:
\[
d_K(p,r)<d_K(p,q)+d_K(q,r).
\]
Examples of strictly convex distance functions include the Minkowski $L_p$ distances for $1<p<\infty$, obtained from the convex body
\[
K_p = \left\{ (x_1,x_2,\dots x_d) \biggm\vert \sum_{i=1}^d x_i^p \le 1 \right\}.
\]
For $p=1$ and for the limiting case $p=\infty$ one obtains distance functions that are convex but not strictly convex. For $p=2$ we recover the familiar Euclidean distance.
\iffull

A \emph{curve}, in one of these spaces, is just a continuous function from the unit interval $[0,1]$ to the space. Its \emph{endpoints} are the values of this function at $0$ and $1$. If $C$ is a curve for a space with distance $d$, we may define its length as
\[
\lim_{n\to \infty} \sum_{i=1}^n d\left(C\left(\frac{i-1}{n}\right),C\left(\frac{i}{n}\right)\right),
\]
when this limit exists.
\fi
Convex distance functions define \emph{geodesic metric spaces} in which the distance between any two points equals the length of a shortest curve between the points. For strictly convex distance functions, this shortest curve is unique: it is just the \emph{line segment} between its two points.
\iffull
It is convenient to combine these observations about shortest curves with the triangle inequality to obtain the following observation about distances:

\begin{observation}[Lipschitz property of distances]
\label{obs:lipschitz}
Let $p$, $q$, and $r$ be points of a geodesic space with distance $d$, and let $C$ be a curve with endpoints $q$ and $r$, of length $\ell$. Then
\[
-\ell\le d(p,q)-d(p,r)\le\ell,
\]
with equality on the left side only when $C$ traces without backtracking a subset of a shortest curve from $p$ to $r$, and with
equality on the right side only when $C$ traces without backtracking a subset of a shortest curve from $q$ to $p$.
\end{observation}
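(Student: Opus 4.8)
The plan is to derive the two-sided inequality from the triangle inequality together with the fact that the length of any curve is at least the distance between its endpoints, and then to extract the equality conditions by tracking which of the intermediate inequalities must be tight.

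First I would record the elementary length bound: for any $n$, repeated application of the triangle inequality gives
\[
\sum_{i=1}^n d\Bigl(C\bigl(\tfrac{i-1}{n}\bigr),\, C\bigl(\tfrac{i}{n}\bigr)\Bigr) \ge d(q,r),
\]
so passing to the limit yields $\ell \ge d(q,r)$. Combined with the two forms of the triangle inequality, $d(p,q) \le d(p,r) + d(r,q)$ and $d(p,r) \le d(p,q) + d(q,r)$, this produces the chain
\[
-\ell \le -d(q,r) \le d(p,q) - d(p,r) \le d(q,r) \le \ell,
\]
which is exactly the asserted inequality. This is the sense in which $x \mapsto d(p,x)$ is $1$-Lipschitz along curves.

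For the right-hand equality $d(p,q) - d(p,r) = \ell$, every inequality in the chain above must be an equation. In particular $\ell = d(q,r)$, so $C$ is a shortest curve from $q$ to $r$ and hence traverses it without backtracking; and $d(p,q) = d(p,r) + d(r,q)$. Because the space is geodesic I can choose a shortest curve $C'$ from $r$ to $p$; concatenating $C$ with $C'$ gives a curve from $q$ to $p$ whose length is $\ell + d(r,p) = d(q,r) + d(r,p) = d(p,q)$, so this concatenation is itself a shortest curve from $q$ to $p$, of which $C$ is the initial sub-curve. Thus $C$ traces a subset of a shortest curve from $q$ to $p$ without backtracking, as required. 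The left-hand equality is handled by the symmetric argument with the roles of $q$ and $r$ exchanged: equality forces $\ell = d(q,r)$ and $d(p,r) = d(p,q) + d(q,r)$, and prepending a shortest curve from $p$ to $q$ to $C$ realizes $C$ as a terminal sub-curve of a shortest curve from $p$ to $r$.

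The only point requiring care is the precise meaning of the phrase ``traces without backtracking a subset of a shortest curve,'' together with the verification that concatenating two shortest curves whose lengths add up to the distance between the extreme endpoints indeed yields a shortest curve. For this I would use that length is additive under concatenation, and that a backtrack-free traversal is forced, since any backtracking would make the length strictly exceed the endpoint distance. I do not expect any serious obstacle here: the content of the observation is essentially careful bookkeeping around the equality case of the triangle inequality, and the geodesic hypothesis is exactly what supplies the extending shortest curve needed to promote $C$ to a sub-curve of a longer shortest curve.
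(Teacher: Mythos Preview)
Your proposal is correct. The paper states this as an observation without proof, treating it as an immediate consequence of the triangle inequality in a geodesic space; your write-up is precisely the natural unpacking of that implicit argument, so there is nothing to compare.
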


Thus, when the space is given by a strictly convex distance function, and when the difference of distances equals the length of the curve $C$, the three points $p$, $q$, and $r$ must be collinear, and curve~$C$ must form a segment of their line.
\fi

\iffull
\subsection{Examples}
In this section we provide examples of integer distance sets for different convex distance functions.

\begin{figure}[t]
\centering\includegraphics[scale=0.42]{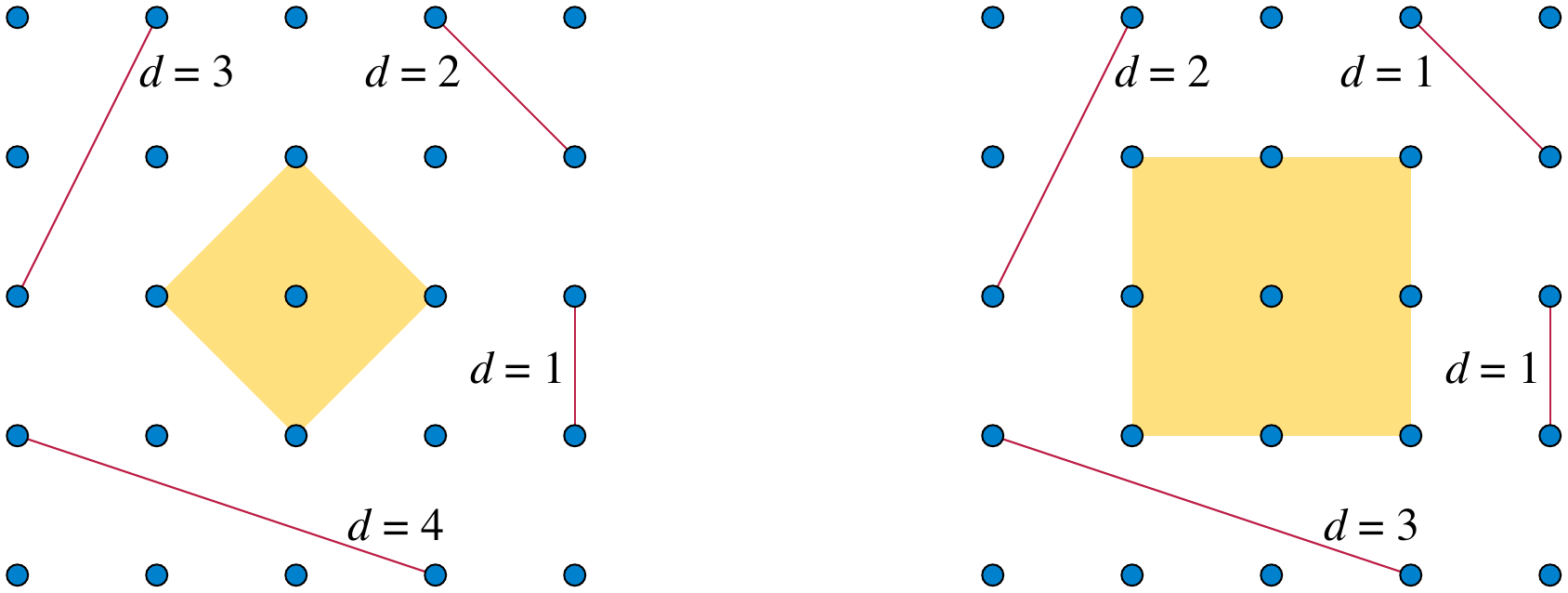}
\caption{Integer $L_1$ (left) and $L_\infty$ distances in the integer grid (\cref{ex:grid-distances}). The yellow shaded regions are the unit balls for these two distances.}
\label{fig:grid-distances}
\end{figure}

\begin{example}
\label{ex:grid-distances}
The integer grid $\{(x,y)\mid x,y\in\mathbb{Z}\}$ has integer distances for the $L_1$ distance (the sum of absolute differences of coordinates) and $L_\infty$ distance (maximum absolute difference of coordinates); see \cref{fig:grid-distances}. Both the $L_1$ and $L_\infty$ distance are convex distance functions, but are not strictly convex distance functions. The unit ball of the $L_1$ distance is a $45^\circ$-rotated square with vertices $(0,\pm 1)$ and $(\pm 1,0)$, while the unit ball of $L_\infty$ distance is an axis-parallel square with vertices $(\pm 1,\pm 1)$.
\end{example}

\begin{example}
\label{ex:complete}
Let $S$ be any finite set of points in the plane for which all $\tbinom{n}{2}$ lines determined by $S$ have distinct slopes. Then there exists a strictly convex distance function $d_K$ that assigns integer distances to each pair of points in $S$. To construct $d_K$, consider the system of unit vectors obtained by normalizing the vector differences of each pair of points; these form a centrally symmetric set on the unit circle, and by assumption they are distinct. Let $\varepsilon$ be a sufficiently small number that expanding any subset of these unit vectors by a factor of $1+\varepsilon$ would preserve the strict convex position of all the vectors. Scale the unit circle and the vectors on it so that, when using the result as the unit ball for a convex distance function, all distances in $S$ become larger than $1/\varepsilon$. Round each of these distances down to an integer, and expand each of the corresponding vectors by the same factor (at most $1+\varepsilon$) by which its distance was decreased. Construct $K$ as any strictly convex set through the resulting vectors, for instance by connecting them in cyclic order using circular arcs of sufficiently large radius. Then for this choice of $K$, each distance in $S$ becomes its rounded value, which by construction is an integer.
\end{example}

A similar construction shows that any countable set of points for which all lines have distinct slopes can be made to have rational (but not integer) distances for a strictly convex distance function, obtained by adding the points one at a time and perturbing the distance function for each added point without changing the previous distances.

\begin{figure}[t]
\centering\includegraphics[scale=0.333]{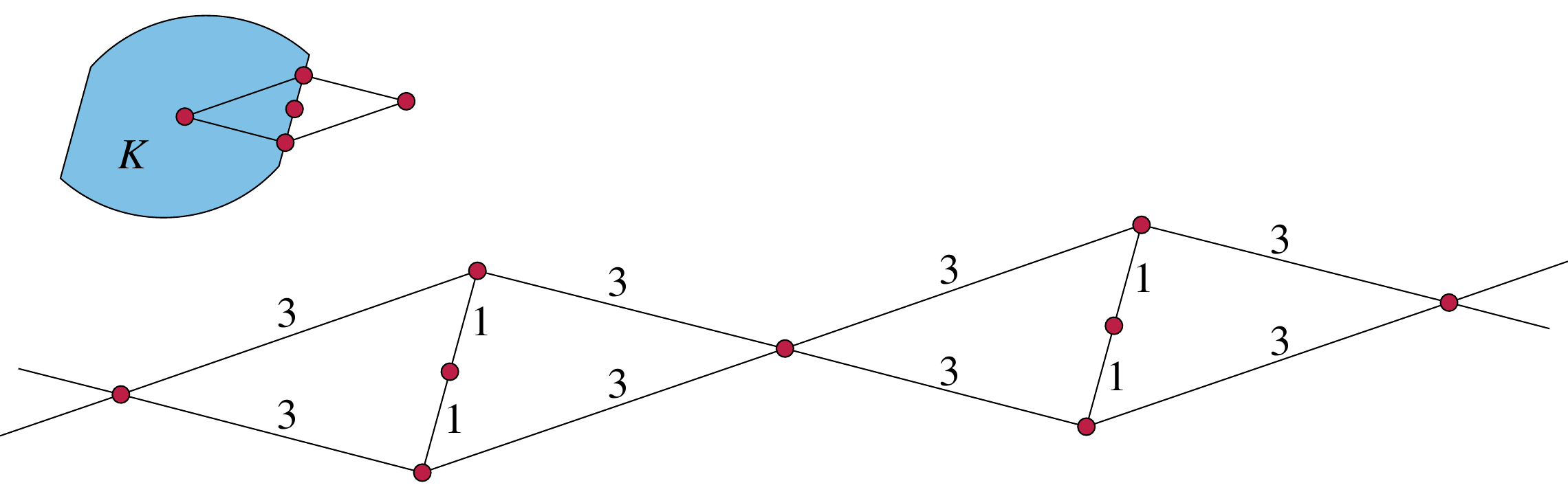}
\caption{Construction for an infinite non-geodesic set with integer distances for a convex but not strictly convex distance function $d_K$ (\cref{ex:non-geodesic}).}
\label{fig:non-geodesic}
\end{figure}

\begin{example}
\label{ex:non-geodesic}
For any convex distance functions $d_K$ that is not strict, there exist infinite sets that have integer distances but that do not belong to a single shortest curve. To construct such a set, note that if $d_K$ is convex but not strictly convex, then the boundary of $K$ contains a line segment. Place two or more points along this segment, at equal rational distances~$\delta$ according to $d_K$, and add two more points at the origin and at its reflection across this line segment, each having unit distance to the entire line segment according to $d_K$. Scale this system of points by the denominator of $\delta$ to produce a finite set of points with integer distances, having a parallelogram as its convex hull. Connecting multiple translated copies of this parallelogram by identifying their copies of the origin and its reflection (\cref{fig:non-geodesic}) produces the desired infinite non-geodesic set with integer distances.
\end{example}

\begin{example}
Euclidean distance is a convex distance function, and there exist infinite dense sets of points on the unit circle with rational Euclidean distances~\cite{Eul-FxA-62,Har-CH-98}. One construction for such a system of points uses complex number coordinates for the points of the plane, and consists of the complex numbers $\{q^{2j}\mid j\in\mathbb{Z}\}$ where $q=\tfrac{a}{c}+i\tfrac{b}{c}$ is a rational point on the unit circle derived from an integer right triangle with side lengths $a$, $b$, and $c$. Geometrically, this point set is obtained from a single point by repeatedly rotating by twice the angle of the triangle.\footnote{This construction is essentially due to Harborth~\cite{Har-CH-98} but he states it incorrectly as using all integer multiples of the angle rather than only the even multiples.} \cref{fig:345-rotations} shows an example derived from the 3-4-5 right triangle.
Arbitrarily large finite non-collinear point sets at integer distances can be obtained by taking a finite subset of this infinite set (possibly including also the center of the circle) and scaling
it to clear the denominators in its coordinates.
\end{example}

\begin{figure}[t]
\centering\includegraphics[width=0.35\textwidth]{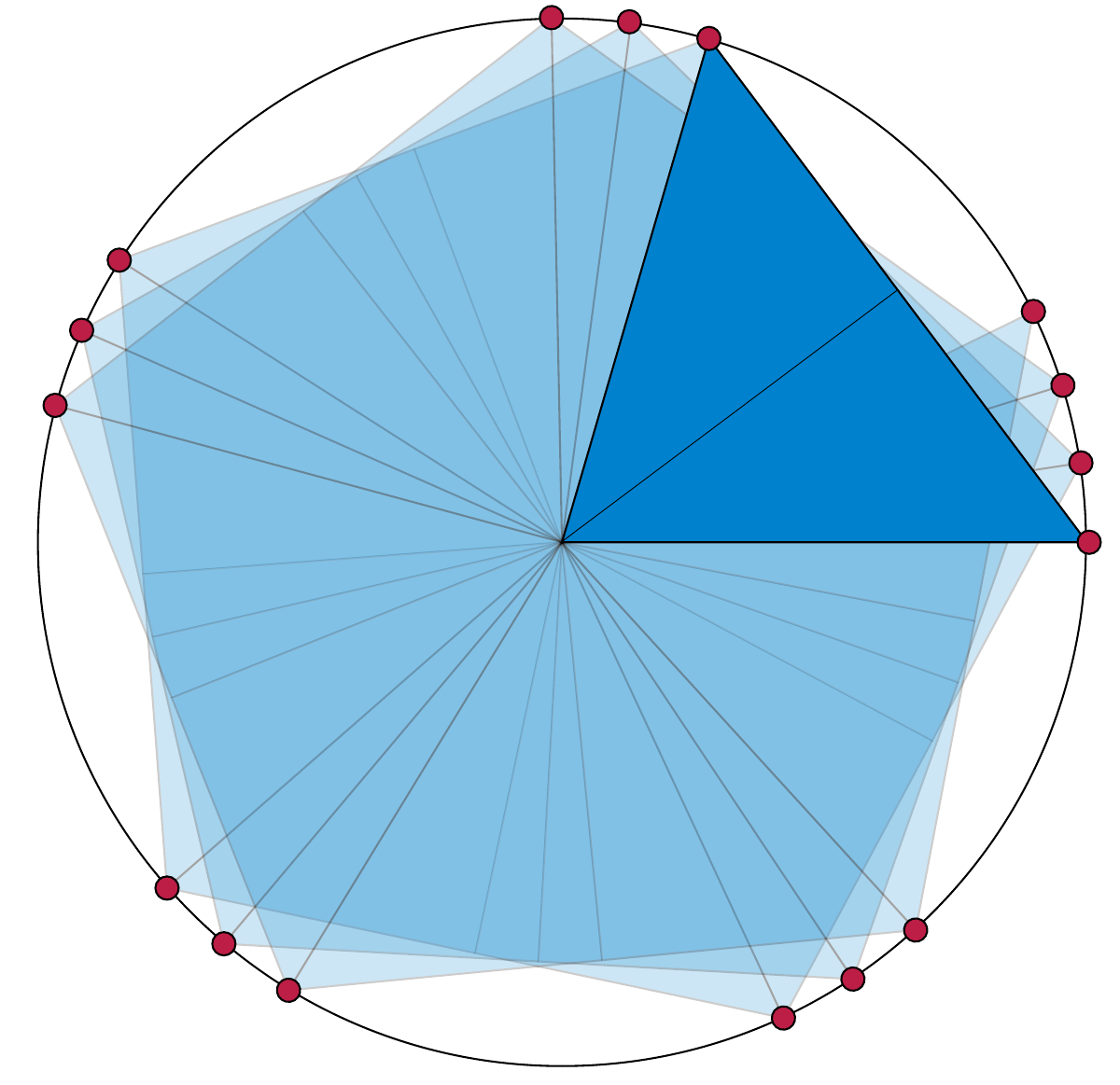}
\caption{The points on the unit sphere whose angles are even multiples of the angles of a 3-4-5 right triangle have pairwise rational distances.}
\label{fig:345-rotations}
\end{figure}

It is an open problem whether there exist arbitrarily-large sets of points at integer Euclidean distances that have no three points in a line and no four points on a circle~\cite{Epp-18}. The largest such sets known have only seven points~\cite{Kle-MI-08,KreKur-DCG-08}.
\fi

\iffull
\else
In unweighted Voronoi diagrams of strictly convex distance functions, every cell is a topological disk (or the whole space) with its site in its interior. However, additively weighted Voronoi diagrams may have additional complications, which we examine more carefully in the full version. In brief: A site may have zero or negative distance to another site, and therefore belong to the cell of another site; we call such sites \emph{degenerate}. The Voronoi cell of a degenerate site may be empty, a  single point, or a line segment or ray extending away from the site in the direction opposite its nearest non-degenerate neighbor. The cells of non-degenerate sites are strictly star-shaped with respect to their sites, interior-disjoint, and cover the plane. From these properties we prove:

\begin{lemma}
\label{lem:two}
For the additively weighted Voronoi diagram of three non-collinear points, the three cells have at most two points of common intersection.
\end{lemma}

The proof is a case analysis that considers how many sites are degenerate. The two degenerate cells of two degenerate sites lie on rays diverging from the non-degenerate site, and cannot intersect. With one degenerate site, there can be only one point of common intersection, the degenerate site (if its cell is a single point) or the opposite endpoint of its line segment cell. The main case has no degenerate sites. If there could be three points of common intersection, the line segments connecting these points to the three sites would lie entirely within the cells of their sites, and could not cross. Therefore, these segments would form a planar drawing of the nonplanar graph $K_{3,3}$, a contradiction.
\fi

\iffull
\subsection{Degenerate and non-degenerate sites}
\label{sec:degenerate}

In unweighted Voronoi diagrams, every cell is a topological disk (or the whole space), having its site in its interior. That is not true for  additively weighted Voronoi diagrams, but the cells that do not have their site in their interior have a very special structure, which we examine in this section.

\begin{lemma}
\label{lem:star}
For additively weighted Voronoi diagrams of 2-dimensional strictly convex distance functions, every non-empty Voronoi cell is star-shaped with its site in its kernel.
\end{lemma}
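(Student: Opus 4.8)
The plan is to prove the kernel condition directly: for any non-empty cell $V_i$ with site $s_i$, I will show that the segment from $s_i$ to an arbitrary point $p\in V_i$ lies entirely within $V_i$. By the definition of the kernel, showing this for every such $p$ places $s_i$ in the kernel. So I fix $p\in V_i$, pick an arbitrary point $q$ on the segment $s_i p$, and aim to verify the defining cell inequality $d(q,s_i)+w_i\le d(q,s_j)+w_j$ for every rival site $s_j$ with $j\ne i$.

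Two ingredients drive the verification, and the key is that they treat the home site $s_i$ and the rival sites $s_j$ asymmetrically. First, because the distance function is strictly convex, the unique shortest curve from $s_i$ to $p$ is the segment $s_i p$, and $q$ lies on it; this gives the exact additive identity $d(q,s_i)=d(s_i,p)-d(q,p)$. Second, for a rival site $s_j$ I only need a one-sided bound, which I obtain from the Lipschitz property (\cref{obs:lipschitz}) applied to the point $s_j$ and the subsegment from $q$ to $p$, of length $d(q,p)$: this yields $d(q,s_j)\ge d(s_j,p)-d(q,p)$.

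Chaining these together with the hypothesis $p\in V_i$ (that is, $d(s_i,p)+w_i\le d(s_j,p)+w_j$) gives
\[
d(q,s_i)+w_i = d(s_i,p)+w_i-d(q,p) \le d(s_j,p)+w_j-d(q,p) \le d(q,s_j)+w_j,
\]
so $q\in V_i$ for every $q$ on the segment and every rival $j$, which is exactly the claim. A pleasant by-product is that taking $q=s_i$ (so that $d(q,p)=d(s_i,p)$) collapses the chain to $w_i\le d(s_i,s_j)+w_j$, which shows that a non-empty cell always contains its own site; this is what lets me assert that $s_i$ itself, and not merely some interior witness, lies in the kernel.

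I expect the only real subtlety to be bookkeeping around degenerate cells. The argument never assumes $V_i$ is a topological disk, so it applies verbatim when a cell degenerates to a segment or a single point; in the single-point case the previous observation forces that point to be $s_i$, and the kernel condition holds trivially. The conceptual crux — and the step I would flag as doing the genuine work — is the asymmetry noted above: strict convexity buys an \emph{equality} for the site that owns the cell, while the triangle (Lipschitz) inequality supplies exactly the matching \emph{inequality} for each competitor, and the two fit together with no slack.
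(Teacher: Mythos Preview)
Your proof is correct and follows essentially the same approach as the paper's: both arguments use the Lipschitz property of distances to show that along the segment $s_ip$ the weighted distance to $s_i$ decreases by exactly $d(q,p)$ while the weighted distance to any rival site decreases by at most that amount, so the minimum is preserved. Your version is more explicit (writing out the chain of inequalities and noting the by-product that $s_i\in V_i$), but the underlying idea is identical.
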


\begin{proof}
The star-shaped property is an immediate consequence of the Lipschitz property of distances. If site $s_i$ has the minimum weighted distance to a given point $p$, then for any other point $q$ on line segment $ps_i$, the weighted distance from $s_i$ to $q$ is less by $d(p,q)$, while other weighted distances decrease by at most the same amount, so the weighted distance from $s_i$ remains minimum. Thus, if~$p$ belongs to the Voronoi cell of $s_i$, so does every point $q$ on line segment $ps_i$.
\end{proof}

\begin{definition}
Define a \emph{degenerate site} of an additively weighted Voronoi diagram of a 2-dimensional strictly convex distance function to be a site that belongs to the Voronoi cell of another site. Define a \emph{non-degenerate site} to be a site that is not degenerate.
\end{definition}

\begin{figure}[t]
\centering\includegraphics[scale=0.42]{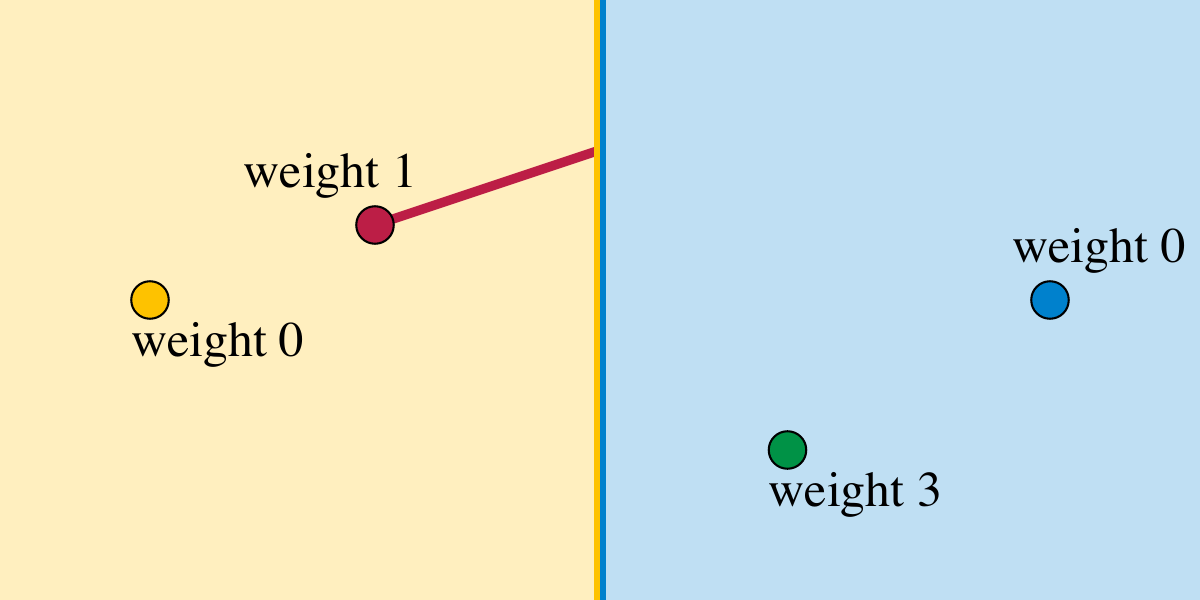}
\caption{An additively weighted Voronoi diagram with two degenerate sites (red and green) and two non-degenerate sites (yellow and blue). The red site has equal weighted distance to itself and to the yellow site; its Voronoi cell (shown as a red line segment) extends along a ray, away from the yellow site, to the boundary of the yellow Voronoi cell. The green site has larger weighted distance to itself than to the blue site, so its Voronoi cell is empty. The Voronoi cells for the yellow and blue sites together cover the plane, separated from each other by the central vertical line. The distance function is unspecified; it could be Euclidean, or any other strictly convex distance function whose unit disk has vertical reflection symmetry (so that the bisector of the two horizontally-aligned yellow and blue points is a vertical line).}
\label{fig:degenerate}
\end{figure}

For examples of degenerate and non-degenerate sites, see \cref{fig:degenerate}. The next lemma explains the shapes of the cells in this example:

\begin{lemma}
\label{lem:ray}
Let $s_i$ be a degenerate site of an additively weighted Voronoi diagram of a 2-dimensional strictly convex distance function, with $s_i$ belonging to the Voronoi cell $V_j$ of another site $s_i$. Let $R$ be a ray on line $s_is_j$ with $s_i$ as its initial point, extending away from~$s_j$.
 Then either $s_i$ is empty, or $V_i=R\cap V_j$.
\end{lemma}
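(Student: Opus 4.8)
The plan is to extract from the degeneracy hypothesis a single inequality between the weights, and then to use the equality case of the triangle inequality (which is exactly where strict convexity enters) to show that the only points that can possibly lie in $V_i$ are those forced onto the ray $R$. First I would record what degeneracy means quantitatively: since $s_i\in V_j$, the weighted distance from $s_i$ to $s_j$ is at most the weighted distance from $s_i$ to itself, and as $d(s_i,s_i)=0$ this reads $d(s_i,s_j)+w_j\le w_i$, i.e.\ $w_i\ge w_j+d(s_i,s_j)$.

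Next I would establish the forward inclusion together with the stated dichotomy. Let $p$ be any point of $V_i$, so that $d(p,s_i)+w_i\le d(p,s_j)+w_j$. Substituting the weight inequality gives $d(p,s_i)+d(s_i,s_j)\le d(p,s_j)$, while the triangle inequality gives the reverse, so in fact $d(p,s_j)=d(p,s_i)+d(s_i,s_j)$ and every inequality along the way is tight. Tightness of the weight inequality forces $w_i=w_j+d(s_i,s_j)$, and tightness of the cell inequality gives $d(p,s_i)+w_i=d(p,s_j)+w_j$, which shows that $p$ also attains the minimum weighted distance at $s_j$, so $p\in V_j$. Meanwhile the tight triangle inequality, read through \cref{obs:lipschitz} with $C$ the segment $s_js_i$, forces $p$, $s_i$, $s_j$ to be collinear with $s_i$ between $p$ and $s_j$; hence $p\in R$. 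This proves $V_i\subseteq R\cap V_j$, and shows that $V_i$ can be non-empty only when $w_i=w_j+d(s_i,s_j)$. Equivalently, if $w_i>w_j+d(s_i,s_j)$ then no such $p$ exists and $V_i=\emptyset$, which is the first alternative of the dichotomy.

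Finally I would prove the reverse inclusion in the remaining case $w_i=w_j+d(s_i,s_j)$. Take $p\in R\cap V_j$. Being on $R$ means $p$, $s_i$, $s_j$ are collinear with $s_i$ between $p$ and $s_j$, so distances are additive along this line and $d(p,s_j)=d(p,s_i)+d(s_i,s_j)$ (the case $p=s_i$ holds trivially). Combining with the weight equation yields $d(p,s_i)+w_i=d(p,s_j)+w_j$, and since $p\in V_j$ realizes the global minimum weighted distance at $s_j$, it realizes the same minimum at $s_i$; hence $p\in V_i$. This gives $R\cap V_j\subseteq V_i$ and completes the identity $V_i=R\cap V_j$.

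I do not expect a genuine obstacle here, since the content is essentially bookkeeping around one weight inequality. The single step that must be handled with care, and the only place strict convexity is used, is the passage from the tight triangle inequality to collinearity with $s_i$ strictly between $p$ and $s_j$: it is precisely the failure of this step for non-strict distance functions (as in the $L_1$ and $L_\infty$ examples) that would let $V_i$ be two-dimensional rather than a subsegment of a ray. I would lean on \cref{obs:lipschitz} and the remark following it to supply this step cleanly.
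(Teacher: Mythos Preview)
Your proposal is correct and follows essentially the same route as the paper: both arguments reduce to the weight inequality $w_i\ge w_j+d(s_i,s_j)$ together with the equality case of the triangle inequality (via \cref{obs:lipschitz}) to pin $V_i$ onto the ray $R$. The only cosmetic difference is that the paper handles the strict case $w_i>w_j+d(s_i,s_j)$ by invoking \cref{lem:star} (if $s_i\notin V_i$ then $V_i=\emptyset$), whereas you recover the same dichotomy directly from the chain of tight inequalities; the underlying content is identical.
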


\begin{proof}
If the weighted distance from $s_i$ to $s_j$ is less than the weighted distance from $s_i$ to itself, then $s_i$ does not belong to its own Voronoi cell, and the only possibility according to \cref{lem:star} is that $V_i$ is empty. Otherwise, at $s_i$, the weighted distances to $s_i$ and $s_j$ are equal. Let $R$ be as described in the statement of the lemma. By the Lipschitz property of distances, every point of $R$ has equal weighted distance to $s_i$ and to $s_j$, and every other point in the plane has smaller weighted distance to $s_j$ than to $s_i$. Therefore, the set of points whose weighted distance to $s_i$ is minimum (among weighted distances to all sites) is exactly $R\cap V_j$.
\end{proof}

By repeatedly using \cref{lem:ray} to replace the Voronoi cells of degenerate sites with supersets that are also Voronoi cells, until no more such replacements can be performed, we obtain:

\begin{corollary}
\label{cor:non-degenerate-cover}
In an additively weighted Voronoi diagram of a 2-dimensional strictly convex distance function, every point belongs to the Voronoi cell of at least one non-degenerate site.
\end{corollary}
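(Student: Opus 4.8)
The plan is to make the replacement scheme foreshadowed just before the statement precise, with \cref{lem:ray} as its engine: whenever a site $s_i$ is degenerate, lying in the cell $V_j$ of another site $s_j$, that lemma gives $V_i\subseteq V_j$, so every point covered by the degenerate cell $V_i$ is already covered by $V_j$. First I would record the trivial but essential fact that the Voronoi cells cover the whole plane, since every point $p$ attains its minimum weighted distance at some site and hence lies in that site's cell. It therefore suffices to show that the cells of the \emph{non-degenerate} sites alone still cover the plane.

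The key quantitative observation, which is what makes the replacement terminate, is that degeneracy forces a strict drop in weight. If $s_i$ is degenerate with $s_i\in V_j$, then evaluating the defining inequality of $V_j$ at the point $s_i$, and comparing with the term for site $s_i$ itself, gives $d(s_i,s_j)+w_j\le d(s_i,s_i)+w_i=w_i$; since $s_i\ne s_j$ are distinct points we have $d(s_i,s_j)>0$, and therefore $w_j<w_i$. In words, a degenerate site can only be absorbed into a site of strictly smaller weight.

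With these two facts in hand I would argue as follows. Fix an arbitrary point $p$, and among all sites whose cell contains $p$ --- a nonempty collection, by the covering remark --- choose a site $s_i$ of minimum weight. I claim $s_i$ is non-degenerate, which proves the corollary. Indeed, if $s_i$ were degenerate, say $s_i\in V_j$, then \cref{lem:ray} would give $p\in V_i\subseteq V_j$, so $s_j$ would also be a site whose cell contains $p$; but $w_j<w_i$ by the weight observation, contradicting the minimality of $w_i$. Equivalently, one can phrase this as the iterative deletion of degenerate sites: removing a degenerate site only enlarges the remaining cells and, because $V_i\subseteq V_j$, loses no coverage, while the strict weight decrease guarantees that the process cannot cycle and must halt at a diagram with no degenerate sites.

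I expect the only genuine subtlety to be the termination (acyclicity) issue: a degenerate site may be absorbed into a site that is itself degenerate, so one cannot simply point each degenerate site directly at a non-degenerate one in a single step. The strict monotonicity $w_j<w_i$ along every absorption is exactly what forbids cycles and forces each chain of absorptions to end at a non-degenerate site; packaging this as the minimum-weight selection above lets me avoid an explicit induction on the number of sites and sidestep any bookkeeping about how deletions reshape the rest of the diagram.
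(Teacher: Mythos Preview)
Your proposal is correct and follows essentially the same route as the paper: both arguments use \cref{lem:ray} to get $V_i\subseteq V_j$ whenever $s_i$ is degenerate with $s_i\in V_j$, and then pass from a covering by all cells to a covering by non-degenerate cells via iterated replacement. Your contribution is to make the termination step rigorous through the observation $w_j<w_i$, which the paper's one-line proof leaves implicit; packaging this as ``choose a minimum-weight site among those whose cell contains $p$'' is a clean way to avoid tracking chains of absorptions.
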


Note that \cref{cor:non-degenerate-cover} would not be true if we allowed infinite sets of sites. Instead of defining degeneracy of a site by its membership in other cells, we have an equivalent characterization by its position within its own cell:

\begin{lemma}
\label{lem:non-degenerate-interior}
A site $s_i$ of an additively weighted Voronoi diagram of a 2-dimensional strictly convex distance function is non-degenerate if and only if it is an interior point of its cell $V_i$.
\end{lemma}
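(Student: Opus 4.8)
The plan is to prove the two directions separately, in each case linking the combinatorial definition of (non-)degeneracy to the local behavior near $s_i$ of the weighted distance functions $f_k(p)=d(p,s_k)+w_k$. The key preliminary step, which I would establish first, is to reformulate degeneracy in terms of these functions evaluated at $s_i$. Since $f_i(s_i)=w_i$, the quantity $\min_k f_k(s_i)$ is always at most $f_i(s_i)$; and by definition $s_i$ is degenerate exactly when $s_i\in V_j$ for some $j\ne i$, i.e.\ when the overall minimum $\min_k f_k(s_i)$ is attained at some index $j\ne i$, which happens precisely when $f_j(s_i)\le f_i(s_i)$ for some $j\ne i$. Equivalently, $s_i$ is non-degenerate exactly when $f_i(s_i)<f_j(s_i)$ (strict inequality) for every other site $s_j$.

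For the forward direction (non-degenerate $\implies$ interior), I would use that each $f_k$ is continuous---indeed Lipschitz, by \cref{obs:lipschitz}. If $s_i$ is non-degenerate then $f_i(s_i)<f_j(s_i)$ strictly for each of the finitely many other sites $s_j$, so each such strict inequality persists on an open neighborhood of $s_i$. Intersecting these finitely many neighborhoods yields an open set on which $f_i(p)<f_j(p)$ for all $j\ne i$, that is, an open neighborhood of $s_i$ contained in $V_i$. Hence $s_i$ is an interior point of $V_i$.

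For the reverse direction I would argue the contrapositive: a degenerate site cannot be an interior point of its cell. Here I would simply invoke \cref{lem:ray}, which says that if $s_i$ is degenerate then $V_i$ is either empty or equal to $R\cap V_j$ for a ray $R$ on the line $s_is_j$. In either case $V_i$ is contained in a line and therefore has empty interior as a subset of the plane, so $s_i$ cannot be an interior point of it.

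I expect the only real content to be the opening reformulation of degeneracy, together with the observation that finiteness of the site set is exactly what makes the neighborhood intersection in the forward direction open; once \cref{lem:ray} is available the reverse direction is immediate. The one point to get right is not to conflate the closed condition ``$s_i$ lies in some other cell'' (a non-strict $\le$ allowing ties) with strict separation, which is precisely why the interior characterization hinges on the strict inequality $f_i(s_i)<f_j(s_i)$.
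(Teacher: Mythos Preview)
Your proof is correct and closely parallels the paper's. Both arguments use \cref{lem:ray} for one direction (you phrase it as ``degenerate $\Rightarrow$ not interior'', the paper as its contrapositive ``interior $\Rightarrow$ non-degenerate''); for the remaining direction you propagate the strict inequalities $f_i(s_i)<f_j(s_i)$ to a neighborhood by continuity, whereas the paper argues the contrapositive by noting that if every neighborhood of $s_i$ meets some other cell then, by finiteness of the site set and closedness of cells, a single $V_j$ must contain $s_i$. These are two equally short routes to the same implication, and your explicit reformulation of non-degeneracy as a strict inequality is a clean way to set it up.
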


\begin{proof}
If $s_i$ is an interior point of $V_i$, $V_i$ cannot be a subset of a ray (which has no interior points), and it follows from \cref{lem:ray} that $s_i$ is non-degenerate. If $s_i$ is not an interior point of $V_i$, then every neighborhood of $s_i$ has points that are not in $V_i$, and therefore there exists some other cell~$V_j$ intersecting this neighborhood. Because there are only finitely many sites, we can reverse the quantifiers: there exists some other cell $V_j$ that intersects every neighborhood of $s_i$. But because cells are closed sets, this implies that $s_i\in V_j$ and $s_i$ is degenerate.
\end{proof}

As the next lemma shows, each ray with a non-degenerate site as its initial point passes through at most one boundary point of its Voronoi cell.

\begin{figure}[t]
\centering\includegraphics[scale=0.42]{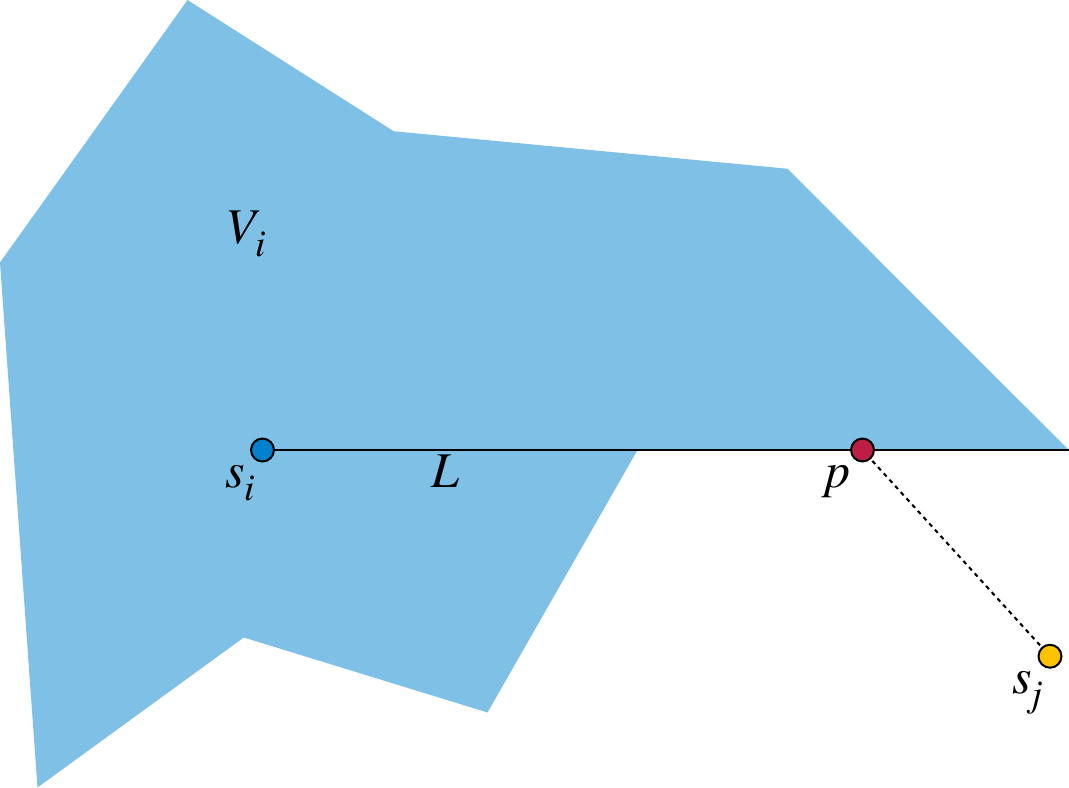}
\caption{Figure for the proof of \cref{lem:interior}: a site $s_i$, a segment $L$ within Voronoi cell $V_i$, and a point $p$ in the relative interior of $L$ that is not interior to $V_i$. The lemma proves that this situation cannot exist by considering cases for the location of $s_j$, another site with the same distance to $p$ as $s_i$.}
\label{fig:interior}
\end{figure}

\begin{lemma}
\label{lem:interior}
Let $s_i$ be a non-degenerate site of an additively weighted Voronoi diagram of a 2-dimensional strictly convex distance function, let $L$ be a line segment in $V_i$ having $s_i$ as an endpoint, and let $p$ be a point in the relative interior of $L$. Then $p$ is interior to $V_i$.
\end{lemma}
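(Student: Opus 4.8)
The plan is to argue by contradiction, showing that a point $p$ in the relative interior of $L$ that fails to be interior to $V_i$ would force a second site to sit on the line through $L$ in a way incompatible with $s_i$ being non-degenerate. Write $f_k(x)=d(x,s_k)+w_k$ for the weighted distance to site $s_k$. If $p$ is not interior to $V_i$, then since there are only finitely many sites there is a single other site $s_j$ and a sequence of points approaching $p$ at each of which $f_j<f_i$; taking limits and using $p\in V_i$ gives $f_i(p)=f_j(p)$, so $p$ lies on the bisector of $s_i$ and $s_j$. I would also record that, because $s_i$ is non-degenerate, \cref{lem:non-degenerate-interior} places $s_i$ in the interior of $V_i$, which forces the \emph{strict} inequality $f_i(s_i)<f_j(s_i)$: an equality $f_i(s_i)=f_j(s_i)$ would place $s_i$ in $V_j$ and make it degenerate.

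Next I would parametrize $L$ by arclength, writing $x(t)$ for the point at distance $t$ from $s_i$ along $L$, so that $x(0)=s_i$ and $f_i(x(t))=t+w_i$ grows at unit rate, and set $h(t)=f_j(x(t))-f_i(x(t))$. The Lipschitz bound of \cref{obs:lipschitz} on $f_j$ shows that $h$ is non-increasing in $t$. The three facts $h(0)>0$ (non-degeneracy), $h=0$ at the parameter of $p$ (bisector), and $h\ge 0$ everywhere on $L$ (since $L\subseteq V_i$) then combine: a non-increasing, nonnegative function that vanishes at the parameter of $p$ must vanish on the entire portion of $L$ beyond $p$. This is precisely the step that uses the hypothesis that $p$ is in the \emph{relative interior} of $L$, since it guarantees a nondegenerate sub-segment of $L$ beyond $p$ on which $h\equiv 0$.

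On that sub-segment, $h\equiv 0$ means $f_j$ grows at exactly unit rate, i.e.\ equality holds in the Lipschitz bound of \cref{obs:lipschitz}; its equality clause then forces $s_j$ onto the line through $L$, at the specific parameter $t_j=w_j-w_i$, with points of the sub-segment lying on shortest segments to $s_j$. I expect the main obstacle to be the case analysis that follows, because the two cases are genuinely asymmetric and only one contradicts $s_i$ directly. If $t_j\le 0$, then $d(s_i,s_j)=|t_j|=-t_j$ yields $f_j(s_i)=f_i(s_i)$, contradicting the strict inequality at $s_i$. If instead $t_j>0$, then $s_j$ lies on $L$ strictly between $s_i$ and $p$ — a degenerate site — and $h(0)$ can remain positive, so no contradiction comes from $s_i$; here I would instead use $d(s_i,s_j)=t_j=w_j-w_i$ together with the triangle inequality to deduce $f_i(x)\le f_j(x)$ for \emph{every} point $x$ of the plane, contradicting the existence of the approaching points at which $f_j<f_i$. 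Either way the assumption fails, so $p$ is interior to $V_i$.
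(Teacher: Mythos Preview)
Your proof is correct and takes essentially the same approach as the paper: argue by contradiction, locate a second site $s_j$ with equal weighted distance at $p$, and then perform a case analysis on the position of $s_j$ relative to the line through $L$ using the Lipschitz property and its equality clause. The only difference is organizational---you first force $s_j$ onto that line (via the equality clause on the sub-segment of $L$ beyond $p$) and then split on the sign of $t_j$, whereas the paper runs three parallel cases (``$s_j$ off the line or past $p$'', ``$s_j$ on segment $ps_i$'', ``$s_i$ on segment $ps_j$''); your case $t_j>0$ is exactly the paper's second case (where the paper cites \cref{lem:ray} for the inclusion you obtain directly from the triangle inequality), and your case $t_j\le 0$ is the paper's third.
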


\begin{proof}
Suppose for a contradiction that $p$ is not interior to $V_i$. Then by finiteness of the number of sites, there must exist another site $s_j$ such that every neighborhood of $p$ contains points of $V_j\setminus V_i$. Because Voronoi cells are closed, $p$ belongs to $V_j$ as well as to $V_i$, and has equal weighted distance to $s_i$ and $s_j$.
We now distinguish three cases for the possible location of $s_j$:
\begin{itemize}
\item Suppose that $s_j$ does not lie on the line containing $L$, as shown in \cref{fig:interior}, or lies on the opposite side from $p$ as $s_i$ on this line or great circle. Then by the Lipschitz property of distances, all points of $L$ that are on the opposite side from $p$ as $S_i$ would have smaller weighted distance to $s_j$ than to $s_i$, because the distance from $s_i$ increases at the same rate as distance from $p$ along $L$, while the distance from $s_j$ cannot increase as quickly. But this contradicts the assumption that some of these points belong to $V_i$.
\item Suppose that $s_j$ lies on segment or arc $ps_i$. Then $s_j\in V_i$, and by \cref{lem:ray} its Voronoi cell $V_j$ is a subset of $V_i$, contradicting the assumption that $N$ contains a point of $V_j\setminus V_i$.
\item The only remaining case is that $s_i$ lies on line segment $ps_j$. But this lies within $V_j$ by \cref{lem:star}, so $s_i\in V_j$, contradicting the assumption that $s_i$ is non-degenerate.
\end{itemize}
As all cases lead to a contradiction, the assumption that $p$ is not interior to $V_i$ must be false.
\end{proof}
\fi

\iffull
\subsection{Intersections of non-degenerate cells}
\label{sec:intersection} 

In \cref{sec:degenerate}, we studied the shapes of individual cells of additively weighted Voronoi diagrams. In this section we instead consider the intersection patterns of cells.

\begin{figure}
\centering\includegraphics[scale=0.42]{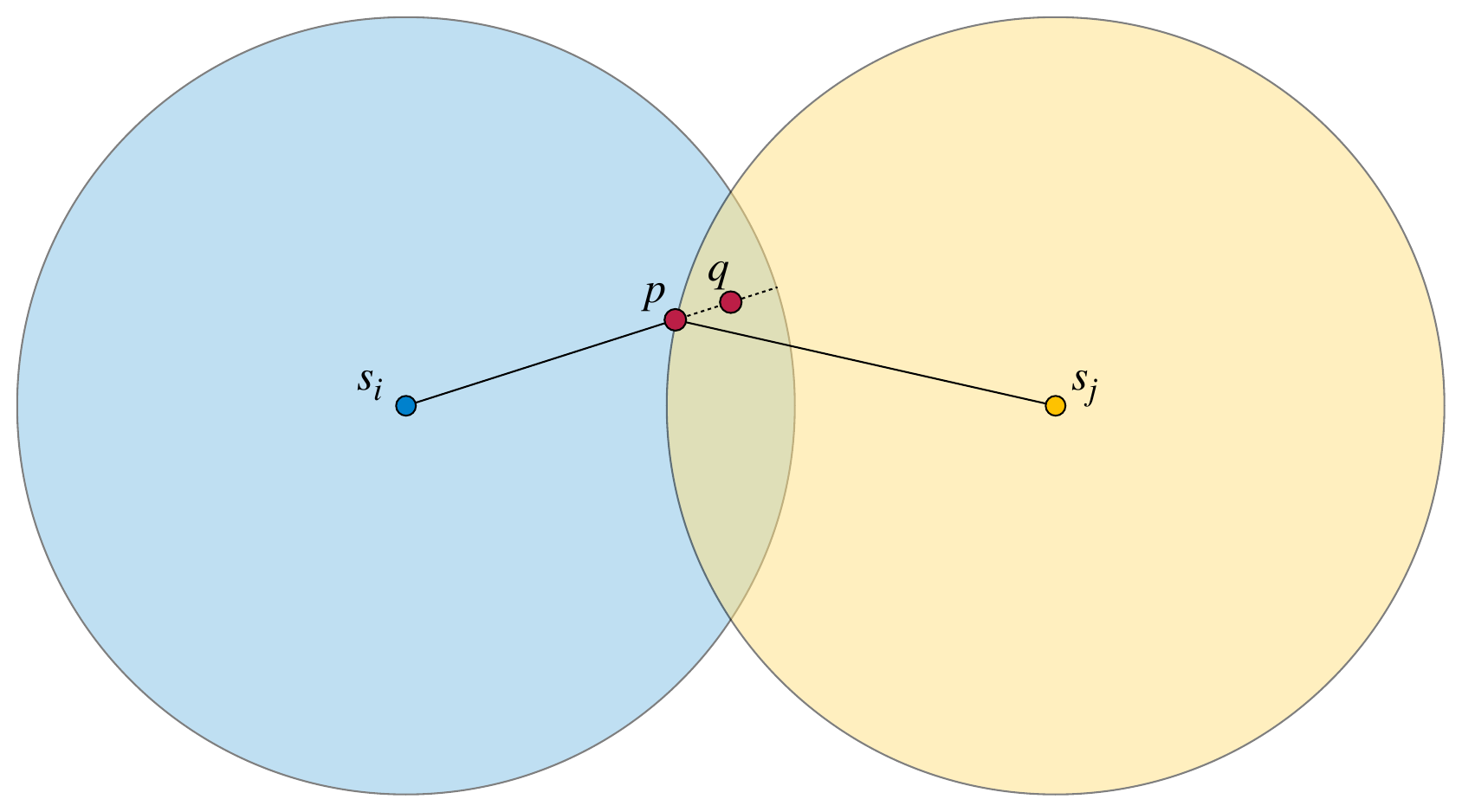}
\caption{Figure for the proof of \cref{lem:non-overlapping}: if a point $p$ could belong to the interior of the Voronoi cell $V_i$ for non-degenerate site $s_i$, and also belong to the Voronoi cell for $s_j$, then some point $q$ past $p$ on the same ray would also belong to $V_i$, contradicting the Lipschitz property of distances.}
\label{fig:non-overlapping}
\end{figure}

\begin{lemma}
\label{lem:non-overlapping}
Let $s_i$ and $s_j$ be non-degenerate sites of an additively weighted Voronoi diagram for a 2-dimensional strictly convex distance function. Then the interior of $V_i$ is disjoint from $V_j$, and vice versa.
\end{lemma}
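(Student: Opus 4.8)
The plan is to argue by contradiction, exploiting the symmetry of the two claims: interchanging the roles of $i$ and $j$ turns the assertion that the interior of $V_i$ is disjoint from $V_j$ into the assertion that the interior of $V_j$ is disjoint from $V_i$, so it suffices to rule out a single point $p$ lying both in the interior of $V_i$ and in $V_j$, with $i\ne j$. Because $p$ lies in both cells, it attains the minimum weighted distance for each of $s_i$ and $s_j$, so $d(p,s_i)+w_i=d(p,s_j)+w_j$. Since $p$ is interior to $V_i$, I would then pick a point $q$ slightly beyond $p$ on the ray from $s_i$ through $p$, taking $q$ close enough to $p$ that it stays inside a ball contained in $V_i$; in particular $q\in V_i$.

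The next step is to compare how the two weighted distances change as we move from $p$ to $q$. Along this ray the weighted distance from $s_i$ grows at the maximal rate: the points $s_i$, $p$, $q$ are collinear with $p$ between $s_i$ and $q$, and since the segment is the unique shortest curve of a strictly convex distance, $d(q,s_i)=d(p,s_i)+d(p,q)$. Substituting this into the membership condition $d(q,s_i)+w_i\le d(q,s_j)+w_j$ for $q\in V_i$, and then using the equality of weighted distances at $p$, I would obtain the reverse triangle inequality $d(q,s_j)\ge d(p,q)+d(p,s_j)$. In the typical situation, where $s_j$ does not lie on the line through $s_i$ and $p$, the triple $q,p,s_j$ is non-collinear, so \cref{obs:lipschitz} makes the ordinary triangle inequality $d(q,s_j)\le d(q,p)+d(p,s_j)$ strict; this is flatly incompatible with the reverse inequality just derived, yielding the contradiction illustrated in \cref{fig:non-overlapping}.

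The step I expect to be the main obstacle is the degenerate configuration in which $s_j$ \emph{does} lie on the line through $s_i$ and $p$, so that the triangle inequality can be tight. There the reverse inequality forces $d(q,s_j)=d(q,p)+d(p,s_j)$, and the equality clause of \cref{obs:lipschitz} (which relies on strict convexity) places $p$ on the segment $qs_j$. Combined with $p$ lying on the segment $s_iq$ by construction, this puts $s_i$ and $s_j$ on a common ray emanating from $p$, opposite to $q$. Whichever of the two sites is nearer to $p$ then lies on a segment joining $p$ to the other site, which is contained in that other site's cell by its star-shapedness (\cref{lem:star}); hence one of $s_i$, $s_j$ lies in the cell of the other, contradicting the non-degeneracy of the swallowed site. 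As every configuration ends in a contradiction, no point $p$ can lie in the interior of $V_i$ and in $V_j$ simultaneously.
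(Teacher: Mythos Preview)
Your proposal is correct and follows essentially the same approach as the paper's proof: pick a point $q$ just past $p$ on the ray from $s_i$, use that the weighted distance to $s_i$ increases by exactly $d(p,q)$ while the weighted distance to $s_j$ cannot keep pace, and derive a contradiction with $q\in V_i$. The only organizational difference is that the paper dispatches the collinear configuration up front (observing that the segments $ps_i$ and $ps_j$ must be disjoint except at $p$, else one site lies in the other's cell by star-shapedness), whereas you run the Lipschitz argument first and handle the collinear configuration afterward via the equality clause; both routes use the same ingredients.
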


\begin{proof}
Suppose for a contradiction that there existed a point $p$ interior to $V_i$ that also belonged to~$V_j$.
Then segments $ps_i$ and $ps_j$ must be disjoint except for their shared endpoint, for otherwise one would be a subset of the other, and one of the two sites would belong to the Voronoi cell of the other, contradicting the assumption that both are non-degenerate. Because~$p$ belongs to both $V_i$ and $V_j$, it has equal weighted distance to both sites.
Because~$p$ is interior to $V_i$, we can extend line segment $ps_i$ within a neighborhood of $p$ inside $V_i$, to obtain
a line segment $qs_i$ that still remains entirely interior to $V_i$ (\cref{fig:non-overlapping}). By the Lipschitz property of distances, $q$ is farther than $p$ from $s_i$ by an amount equal to the distance from $p$ to $q$, while the distance from $q$ to $s_j$ cannot increase as quickly.
Therefore, $q$ belongs to the Voronoi diagram of $V_j$ but not of $V_i$, contradicting our construction of $q$ as a point in~$V_i$. This contradiction shows that our assumption of a point interior to $V_i$ and in $V_j$ cannot hold: no such point exists. Symmetrically, no point interior to $V_j$ can also belong to $V_i$.
\end{proof}

\begin{lemma}
\label{lem:cross}
Let $s_i$ and $s_j$ be distinct non-degenerate sites of an additively weighted Voronoi diagram of a 2-dimensional strictly convex distance function, and let $p$ and $q$ be distinct points of $V_i$ and $V_j$ respectively. Then line segments (or arcs)  $ps_i$ and $qs_j$ are disjoint.
\end{lemma}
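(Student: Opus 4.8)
The plan is to argue by contradiction, combining the non-overlapping property of non-degenerate cells (\cref{lem:non-overlapping}) with the fact that relative interior points of a site-anchored segment are interior to the cell (\cref{lem:interior}). First I would record the basic containments: since $p\in V_i$ and $s_i$ lies in the kernel of $V_i$ by \cref{lem:star}, the whole segment $ps_i$ is contained in $V_i$, and symmetrically $qs_j\subseteq V_j$. Hence if the two segments met at a point $x$, that point would lie in $V_i\cap V_j$, and the goal becomes to show that no such $x$ can exist.

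Next I would pin down where $x$ is allowed to sit in each cell. Because $x\in V_j$, \cref{lem:non-overlapping} forbids $x$ from being interior to $V_i$, so $x$ lies on the boundary of $V_i$; symmetrically $x$ lies on the boundary of $V_j$. Now I would apply \cref{lem:interior} to the segment $ps_i$, which has the non-degenerate site $s_i$ as an endpoint: every point in its relative interior is interior to $V_i$. Since $x$ is on the boundary of $V_i$, it cannot lie in the relative interior of $ps_i$, so $x$ must be one of the endpoints $p$ or $s_i$. The same reasoning applied to $qs_j$ forces $x\in\{q,s_j\}$.

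Finally I would dispatch the four resulting cases. If $x=p=q$ this contradicts $p\neq q$; if $x=s_i=s_j$ this contradicts that the two sites are distinct; if $x=p=s_j$ then $s_j=p\in V_i$ makes $s_j$ a degenerate site, and symmetrically $x=s_i=q$ gives $s_i=q\in V_j$ and makes $s_i$ degenerate, each contradicting the hypothesis that both sites are non-degenerate. As every case is impossible, the segments share no point.

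I do not expect a genuine obstacle here: the real content has already been absorbed into the two earlier lemmas, and the only care needed is to treat both cells symmetrically and to remember that an intersection in the relative interior of a segment is ruled out by \cref{lem:interior} rather than directly by \cref{lem:non-overlapping}. A minor point to watch is the degenerate possibility $p=s_i$ (or $q=s_j$), where the ``segment'' collapses to a single point; but then $x$ simply equals that point and the case analysis already covers it, so no separate treatment is required.
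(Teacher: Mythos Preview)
Your argument is correct and follows the same route as the paper: both proofs combine \cref{lem:interior} with \cref{lem:non-overlapping} to derive a contradiction from a supposed intersection point. The paper's version is terser---it simply asserts that an intersection would place a point in the relative interior of one segment, hence (by \cref{lem:interior}) in the interior of that cell, contradicting \cref{lem:non-overlapping}---while you unpack the endpoint cases explicitly; your extra case analysis is harmless and in fact fills in the shared-endpoint situation that the paper's one-line proof glosses over.
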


\begin{proof}
Otherwise, one of these two segments would contain a point in the relative interior of the other segment or arc. By \cref{lem:interior} this point would be interior to its Voronoi cell, contradicting \cref{lem:non-overlapping}.
\end{proof}

Finally, we consider intersections of three Voronoi cells, in the main lemma needed for our proof of our non-Euclidean Erd\H{o}s--Anning theorem:

\begin{figure}[t]
\centering\includegraphics[scale=0.21]{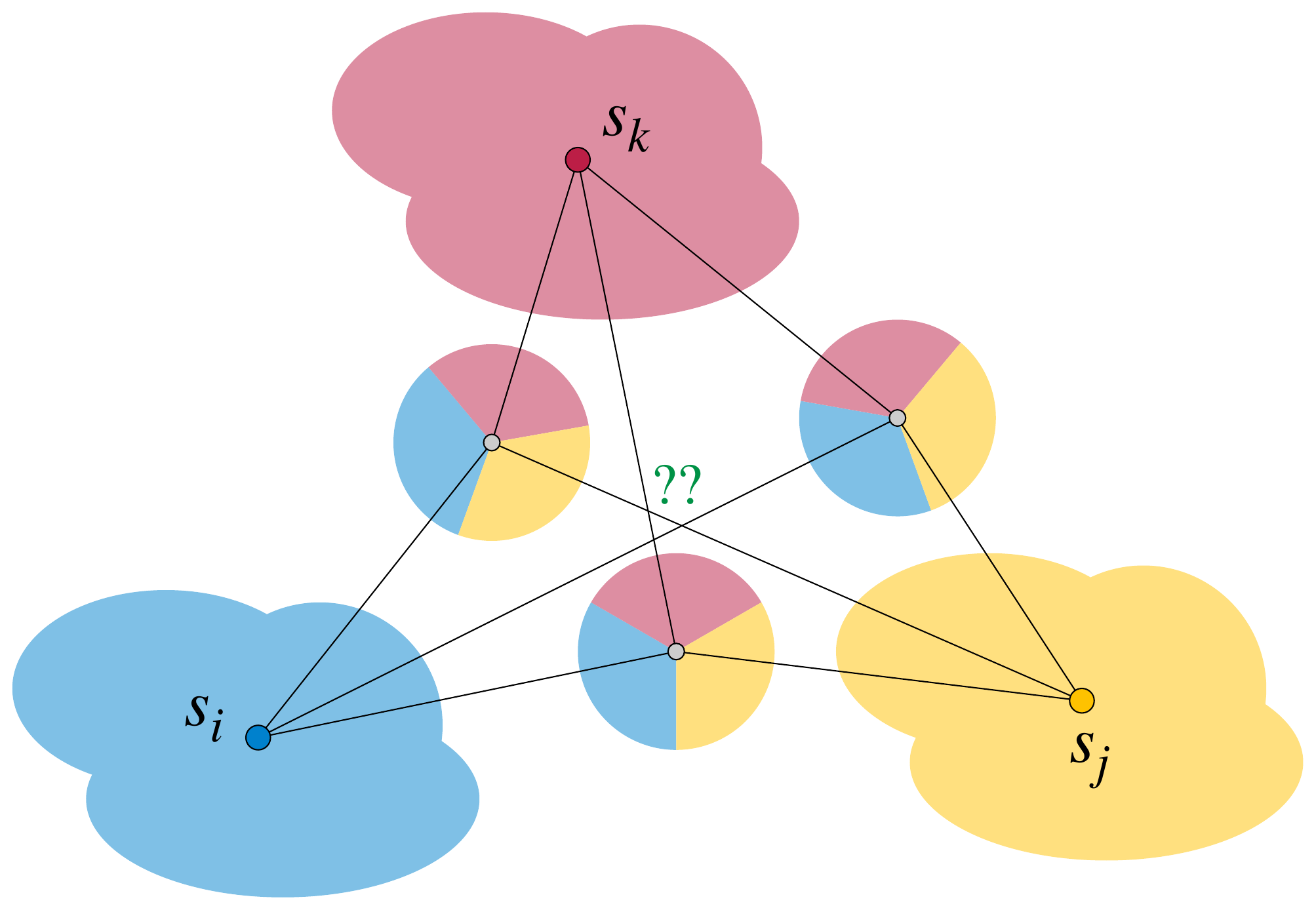}
\caption{Figure for the proof of \cref{lem:two}: Three non-degenerate sites $s_i$, $s_j$, and $s_k$, and three points belonging to the intersection of Voronoi cells $V_i\cap V_j\cap V_k$, would form a drawing of the non-planar graph $K_{3,3}$.}
\label{fig:utility}
\end{figure}

\begin{lemma}
\label{lem:two}
Let $s_i$, $s_j$, and $s_k$ be non-collinear sites of an additively weighted Voronoi diagram of a 2-dimensional strictly convex distance function. Then $V_i\cap V_j\cap V_k$ consists of at most two points.
\end{lemma}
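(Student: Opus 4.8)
The plan is to argue by contradiction from the non-planarity of $K_{3,3}$, using a hypothetical third intersection point together with the three sites as the two sides of the bipartite graph. Suppose first that all three sites are non-degenerate, and suppose for contradiction that $V_i\cap V_j\cap V_k$ contained three distinct points $p_1$, $p_2$, $p_3$. Since each $p_m$ lies in all three cells, none of them can equal a site (a point equal to a site would place that site in another cell, making it degenerate), so the six points $p_1,p_2,p_3,s_i,s_j,s_k$ are distinct. For each point $p_m$ and each site $s_\alpha\in\{s_i,s_j,s_k\}$ I would draw the line segment $p_ms_\alpha$; by \cref{lem:star} this segment lies in $V_\alpha$. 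These nine segments form a drawing of the complete bipartite graph $K_{3,3}$ in the plane, and I would obtain the contradiction by showing the drawing is a proper planar embedding, in which any two edges meet only at a shared endpoint.

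The core of the argument is to verify properness across the three types of edge pairs. For two edges $p_ms_\alpha$ and $p_ns_\beta$ sharing neither endpoint (so $m\ne n$ and $\alpha\ne\beta$), \cref{lem:cross} applies directly, with $p_m\in V_\alpha$ and $p_n\in V_\beta$, and shows the two segments are disjoint. For two edges sharing the site endpoint $s_\alpha$, the only way they can meet away from $s_\alpha$ is to overlap along a common subsegment, which forces $s_\alpha,p_m,p_n$ to be collinear with $p_m,p_n$ on a common ray from $s_\alpha$; but then the nearer of $p_m,p_n$ lies in the relative interior of the segment from $s_\alpha$ to the farther one, so by \cref{lem:interior} it is interior to $V_\alpha$, contradicting \cref{lem:non-overlapping} since it also lies in another cell. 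The case of two edges sharing the point endpoint $p_m$ is symmetric: an overlap would force a site into the relative interior of a segment to another site, hence interior to that other site's cell by \cref{lem:interior}, making it degenerate. Thus the drawing is a genuine planar embedding of $K_{3,3}$, which is impossible, so $V_i\cap V_j\cap V_k$ has at most two points when all sites are non-degenerate.

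It remains to treat the case that some site, say $s_i$, is degenerate. By \cref{lem:ray}, $V_i$ is then either empty (and the intersection is empty) or a subset of a ray $R$ emanating from $s_i$ along the line through $s_i$ and the site whose cell contains it, so every point of $V_i\cap V_j\cap V_k$ lies on $R$ and is therefore collinear with the others. Parametrizing $R$ by $K$-arclength $t\ge 0$ from $s_i$ as $x(t)$, I would note that because the sites are non-collinear the remaining site $s_k$ lies off the line of $R$, and then study $t\mapsto d(x(t),s_k)-t$. Along this line $d(x(t),s_k)=\lVert a+tu\rVert_K$ for a direction $u$ not parallel to $a$, which is a strictly convex function of $t$ by strict convexity of the norm; its slope approaches $1$ from below, so subtracting the linear term $t$ yields a strictly decreasing function. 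Membership of $x(t)$ in $V_k$ pins this function to a constant, which happens for at most one value of $t$, so the intersection has at most one point in this case.

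I expect the main obstacle to be the second paragraph: ruling out \emph{every} non-transversal coincidence, namely overlaps of collinear edges and meetings at relative-interior points, rather than only transversal crossings, since it is exactly these degenerate configurations that a naive ``two bisector curves cross few times'' argument would miss. The earlier structural lemmas (\cref{lem:interior,lem:non-overlapping}) together with the non-degeneracy hypothesis are precisely what exclude these coincidences, and invoking them correctly for each edge-pair type is the delicate part of the proof.
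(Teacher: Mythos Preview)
Your main case—all three sites non-degenerate—matches the paper's argument exactly: draw the nine segments and appeal to the non-planarity of $K_{3,3}$. You are in fact more careful than the paper, which cites only \cref{lem:cross}; that lemma literally covers only pairs of edges with no shared endpoint, and your explicit handling of the two ``shared endpoint'' cases via \cref{lem:interior} and \cref{lem:non-overlapping} fills a gap the paper leaves implicit.

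Your degenerate case is handled differently. The paper first reduces to the diagram with only the three given sites (adding sites can only shrink $V_i\cap V_j\cap V_k$) and then splits combinatorially on how many of the three are non-degenerate, using \cref{lem:ray}, \cref{lem:interior}, and \cref{lem:non-overlapping} to get $|V_i\cap V_j\cap V_k|\le 1$ in each degenerate subcase. You instead give a direct analytic argument: once $V_i$ is contained in a ray $R$, strict convexity of the norm makes $t\mapsto\lVert a+tu\rVert_K-t$ strictly monotone along $R$, so the equation pinning membership in $V_k$ has at most one solution. This is a valid alternative, and it avoids the paper's three-way case split.

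One small repair is needed. You write that ``the remaining site $s_k$ lies off the line of $R$'' because the three sites are non-collinear, but without the reduction to three sites the degenerate $s_i$ may lie in the cell of some \emph{other} site $s_\ell\notin\{s_j,s_k\}$, so $R$ lies along the line $s_is_\ell$, not $s_is_j$. Non-collinearity of $s_i,s_j,s_k$ still guarantees that at least one of $s_j,s_k$ is off that line (both cannot lie on a line through $s_i$), and that is the site to which your convexity argument should be applied. Alternatively, insert the paper's reduction to three sites at the start; then the site whose cell contains $s_i$ is necessarily $s_j$ or $s_k$, and your sentence becomes correct as written.
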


\begin{proof}
Without loss of generality we may consider that these are the only three sites of the Voronoi diagram, because adding more sites can only reduce the points in $V_i\cap V_j\cap V_k$ by causing some of those points to belong to Voronoi cells of other sites. By \cref{lem:interior} and \cref{lem:non-overlapping}, the whole space is covered by the cells of non-degenerate sites, with disjoint interiors. We distinguish cases by how many of the three given sites are non-degenerate:
\begin{itemize}
\item Suppose that there is only one non-degenerate site, without loss of generality $s_i$. Then, $V_i$ is the whole space, and $V_j$ and $V_k$ lie on rays extending away from $s_i$. These rays diverge from each other (by non-collinearity) and must be disjoint, so $|V_i\cap V_j\cap V_k|=0$.
\item In the next case, there are two non-degenerate sites, without loss of generality $s_i$ and $s_j$. Because it is degenerate, $s_k$ belongs to at least one of their Voronoi cells, without loss of generality $V_i$. By \cref{lem:ray}, $V_k$ has the form $R\cap V_i$, where $R$ is a ray collinear with $s_i$. By \cref{lem:interior}, only the farthest point from $s_i$ on $R\cap V_i$ can be a boundary point of $V_i$; all the other points are interior to $V_i$. By \cref{lem:non-overlapping}, only the one boundary point can also belong to $V_j$. Thus, in this case, $|V_i\cap V_j\cap V_k|\le 1$ again.

\item In the remaining case, all three sites are non-degenerate. In particular, none of them belong to $V_i\cap V_j\cap V_k$. Consider the system of line segments connecting each site to each point of $V_i\cap V_j\cap V_k$. By \cref{lem:cross} these line segments cannot intersect except at shared endpoints, so they form a planar drawing of a complete bipartite graph with the three sites on one side of its bipartition and the points of $V_i\cap V_j\cap V_k$ on the other side. If there could be three points in $V_i\cap V_j\cap V_k$, this would give a planar drawing of $K_{3,3}$, a non-planar graph (\cref{fig:utility}). Therefore, $|V_i\cap V_j\cap V_k|\le 2$.\qedhere
\end{itemize}
\end{proof}
\fi

\subsection{Erd\H{o}s--Anning theorems for strict convex distance functions}

\begin{theorem}
\label{thm:triangle}
Let $s_1$, $s_2$, and $s_3$ be three non-collinear points of a 2-dimensional strictly convex distance function $d$. Then the number of points that can have integer distances to all three of $s_1$, $s_2$, and $s_3$ is $O\bigl(d(s_1,s_2)\cdot d(s_1,s_3)\bigr)$.
\end{theorem}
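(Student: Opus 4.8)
The plan is to follow the overall skeleton of the classical Erd\H{o}s--Anning argument---group the candidate points into classes according to their differences of distances to the three sites, bound the number of classes, and bound the number of points per class---but to replace the algebraic-geometry input (Bezout's theorem for hyperbolas) with \cref{lem:two}. Concretely, suppose $p$ has integer distances to all three sites, and set $\alpha = d(p,s_1)-d(p,s_2)$ and $\beta = d(p,s_1)-d(p,s_3)$. Both are integers, and because a convex distance function is symmetric and satisfies the triangle inequality, $|\alpha|\le d(s_1,s_2)$ and $|\beta|\le d(s_1,s_3)$. Hence $\alpha$ ranges over at most $2d(s_1,s_2)+1=O\bigl(d(s_1,s_2)\bigr)$ integer values and $\beta$ over at most $O\bigl(d(s_1,s_3)\bigr)$ integer values, so there are only $O\bigl(d(s_1,s_2)\cdot d(s_1,s_3)\bigr)$ distinct pairs $(\alpha,\beta)$ that can arise.

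First I would show that the map sending each candidate point $p$ to its pair $(\alpha,\beta)$ has fibers of size at most two. Fix a pair $(\alpha,\beta)$ and form the additively weighted Voronoi diagram of the three sites with weights $w_1=0$, $w_2=\alpha$, and $w_3=\beta$. Any point $p$ whose difference values equal $(\alpha,\beta)$ satisfies $d(p,s_1)+w_1=d(p,s_2)+w_2=d(p,s_3)+w_3$, so it has equal---and therefore minimum, since there are only three sites---weighted distance to all three; that is, $p\in V_1\cap V_2\cap V_3$. Because $s_1$, $s_2$, $s_3$ are non-collinear by hypothesis, \cref{lem:two} applies and gives $|V_1\cap V_2\cap V_3|\le 2$. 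Thus every pair $(\alpha,\beta)$ is realized by at most two candidate points.

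Combining the two counts then finishes the proof: the number of candidate points is at most twice the number of realizable pairs, namely $O\bigl(d(s_1,s_2)\cdot d(s_1,s_3)\bigr)$. Since both factors are bounded by the diameter $\delta$ of the triangle, this also yields the $O(\delta^2)$ form quoted in the introduction.

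I do not expect a serious obstacle at this stage: the hard work has already been done in establishing \cref{lem:two}, and what remains is a pigeonhole count. The points that need care are the two halves of the encoding---verifying that having equal weighted distance to all three sites really does place a point in $V_1\cap V_2\cap V_3$ (immediate once one observes there is no fourth site to be closer to), and checking that the additive weights can be chosen, independently of the particular distance function, to target any prescribed pair $(\alpha,\beta)$. It is also worth stating explicitly that symmetry of the convex distance function is what makes the triangle-inequality bounds on $|\alpha|$ and $|\beta|$ valid, and that the exclusion of the $L_1$ and $L_\infty$ counterexamples enters not here but through the strict-convexity hypothesis required by \cref{lem:two}.
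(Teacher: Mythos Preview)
Your proposal is correct and is essentially the same argument as the paper's: encode each candidate point by the integer pair $(\alpha,\beta)=(d(p,s_1)-d(p,s_2),\,d(p,s_1)-d(p,s_3))$, bound the number of such pairs via the triangle inequality, and use \cref{lem:two} to show each pair is realized by at most two points. The paper phrases this by setting the additive weights $w_i=d(s_1,p)-d(s_i,p)$ directly and counting weight combinations, but the content is identical.
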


\begin{proof}
Let $p$ be a point with integer distances to all three of $s_1$, $s_2$, and $s_3$. Consider the additively weighted Voronoi diagram of these three sites, with weights $w_i=d(s_1,p)-d(s_i,p)$. Then at $p$, the three additively weighted distances all equal $d(s_1,p)$, so all three sites are equal nearest neighbors. This means that $p$ must belong to the triple intersection of Voronoi cells $V_1\cap V_2\cap V_3$.

Weight $w_1$ is identically zero, and by the triangle inequality the remaining two weights have absolute values $|d(s_1,p)-d(s_2,p)|$ and $|d(s_1,p)-d(s_3,p)|$ that are at most $d(s_1,s_2)$ and $d(s_1,s_3)$ respectively. Therefore, there are at most $\bigl(2d(s_1,s_2)+1\bigr)\cdot \bigl(2(d(s_1,s_3)+1\bigr)$ combinations of weights defining an additively weighted Voronoi diagram for which $p$ might be a triple intersection point, and at most two triple intersection points per diagram by \cref{lem:two}, giving a total of at most $2\bigl(2d(s_1,s_2)+1\bigr)\cdot \bigl(2(d(s_1,s_3)+1\bigr)$ possible points that can have integer distances to all three of $s_1$, $s_2$, and $s_3$.
\end{proof}

The following generalization of the Erd\H{o}s--Anning theorem follows immediately:

\begin{theorem}
\label{thm:finite}
If a set $S$ of points of a 2-dimensional strictly convex distance function has the property that all distances between points of $S$ are integers, then $S$ is finite or collinear.
\end{theorem}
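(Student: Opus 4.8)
The plan is to obtain this statement as a direct consequence of \cref{thm:triangle}, arguing by contraposition: assuming $S$ is infinite, I would show it must be collinear. So suppose instead, for a contradiction, that $S$ is both infinite and non-collinear. The two cases in the conclusion are disjoint in the sense that ruling out collinearity forces me to produce a witnessing non-collinear triple, and that is the only part of the argument requiring any care.

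First I would extract three non-collinear points $s_1$, $s_2$, $s_3\in S$. Because $S$ is not collinear, no single line contains all of it; concretely, I would choose any two distinct points of $S$, which determine a line $\ell$, and then invoke non-collinearity to find a third point of $S$ lying off $\ell$. These three points are then non-collinear by construction, so they satisfy the hypothesis of \cref{thm:triangle}.

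Next I would use the integrality hypothesis in its full strength: since \emph{all} pairwise distances in $S$ are integers, every point $p\in S$ has integer distances to each of $s_1$, $s_2$, and $s_3$ simultaneously. \cref{thm:triangle} then caps the number of points with this property at $O\bigl(d(s_1,s_2)\cdot d(s_1,s_3)\bigr)$, which is a fixed finite quantity once the triple of sites is chosen. Every point of $S$ is counted among those with integer distances to the triple, so $|S|$ is finite, contradicting the assumption that $S$ is infinite. Hence a non-collinear $S$ must be finite, which is exactly the desired dichotomy.

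I do not expect any substantive obstacle here: the entire content of the theorem is already packaged inside \cref{thm:triangle}, and this final step is purely a matter of quantifying over all points of $S$ and noting that the bound does not depend on the point $p$ being tested. The only place demanding even minor attention is the routine observation that a set not contained in any line must contain three points in general position, which supplies the non-collinear triple needed to apply the quantitative bound.
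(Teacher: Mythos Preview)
Your proposal is correct and matches the paper's own proof essentially verbatim: the paper simply notes that if $S$ is not collinear it contains three non-collinear points, to which \cref{thm:triangle} applies. Your contrapositive framing and explicit extraction of the triple are just an elaboration of that one-line argument.
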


\begin{proof}
If $S$ is not collinear, it has three non-collinear points to which we apply \cref{thm:triangle}.
\end{proof}

\subsection{Diameter bounds}

The number of points in a set with integer distances can also be bounded linearly in the diameter of the set, generalizing a result of Solymosi~\cite{Sol-DCG-03} for Euclidean distance.

\begin{theorem}
\label{thm:dist-fun-diam}
If a set $S$ of points of a 2-dimensional strictly convex distance function has diameter $D$ (measured according to that distance function), and all distances in $S$ are integers, then $|S|=O(D)$, with a constant of proportionality that depends on the distance function.
\end{theorem}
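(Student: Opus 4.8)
The plan is to first remove the degenerate case and then reduce the problem to counting the integer distance pairs to a well-chosen pair of far-apart sites. By \cref{thm:finite}, $S$ is finite or collinear. If $S$ is collinear, then the distance function restricted to the line through $S$ is a constant multiple of Euclidean length along that line, so the points of $S$ occupy integer positions within an interval of length $D$, and there are at most $D+1=O(D)$ of them. I may therefore assume that $S$ is finite and non-collinear, and fix a diameter pair $s_1,s_2\in S$ with $d(s_1,s_2)=D$, which exists because $S$ is finite. Every other point $p\in S$ then satisfies $d(s_1,p),d(s_2,p)\in\{1,\dots,D\}$ together with $d(s_1,p)+d(s_2,p)\ge D$, by the triangle inequality and the definition of diameter.

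The first structural ingredient I would establish is that the map $p\mapsto\bigl(d(s_1,p),d(s_2,p)\bigr)$ from $S$ to $\mathbb{Z}^2$ is at most two-to-one. A fiber of this map consists of the points lying simultaneously on a circle $s_1+r\,\partial K$ and a circle $s_2+r'\,\partial K$; since these are two distinct positive homothets of the strictly convex curve $\partial K$, their boundaries cross in at most two points, the two preimages being mirror images across the segment $s_1s_2$. This is the two-site analogue of \cref{lem:two} and can be proved by the same star-shaped-cell and Lipschitz machinery, since a third common point together with $s_1$, $s_2$, and the weight $w=r-r'$ would force a forbidden incidence of the two weighted bisectors. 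Consequently $|S|\le 2N_{\mathrm{pairs}}+O(1)$, where $N_{\mathrm{pairs}}$ is the number of integer pairs actually realized, and it suffices to prove $N_{\mathrm{pairs}}=O(D)$.

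The heart of the argument, and the step I expect to be the main obstacle, is improving the count of realized pairs from the easy bound $O(D^2)$ to $O(D)$. Any naive slicing yields only $O(D^2)$: grouping the points by the value of $d(s_1,\cdot)$ places them on $O(D)$ concentric circles, while grouping by $d(s_1,\cdot)+d(s_2,\cdot)$ places them on $O(D)$ convex level curves (boundaries of sublevel sets of the convex function $d(s_1,\cdot)+d(s_2,\cdot)$); but each such curve has $d_K$-length $O(D)$ and integrality only forces consecutive points to be at distance $\ge 1$, so a single slice can carry $\Omega(D)$ points. The improvement must therefore use strict convexity and integrality \emph{together}, following Solymosi. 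One clean half comes for free: the convex hull of $S$ has $d_K$-perimeter $O(D)$, with a constant depending on $K$, and its consecutive vertices are at integer distance $\ge 1$, so it has only $O(D)$ vertices. The difficulty lies with the interior points, for which a direct convex-layer peeling loses a polynomial factor, since removing a hull vertex can decrease the perimeter by as little as $\Theta(1/D)$ when a deeper point of $S$ lies just behind it. The plan is thus to organize the charging as Solymosi does, amortizing each point of $S$ against a constant-length portion of the boundary of a nested family of convex level sets and using the strict triangle inequality of \cref{obs:lipschitz} to guarantee that each realized pair consumes a positive integer amount of ``detour'' that no other point can share. Arranging this so that the charging loses only a constant factor, rather than the factor $D$ lost by naive peeling, is the crux of the proof.
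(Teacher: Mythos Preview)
Your approach diverges substantially from the paper's, and the part you yourself flag as ``the crux'' is a genuine gap rather than a routine step.

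The paper does \emph{not} fix a diametral pair and try to count realized distance-pairs. Instead it finds a \emph{small} triangle and feeds it into \cref{thm:triangle}. Concretely: after sandwiching $d_K$ between constant multiples of Euclidean distance, it encloses $S$ in a square $B$ of side $3D$, takes the ordinary Euclidean Voronoi diagram of $S$, and lets $s_1$ be the site whose cell meets $B$ in the smallest area, at most $9D^2/n$. Its nearest Euclidean neighbor $s_2$ and the midpoint $m$ give a right triangle $ps_1m\subset V_1\cap B$ of maximum area. Either $p$ lies on $\partial B$, forcing $d(s_1,s_2)=O(D/n)$, which is a nonzero integer and hence $n=O(D)$; or $p$ lies on $\partial V_1$, equidistant from $s_1$ and some third site $s_3$ not on line $s_1s_2$, and then the triangle area bound gives $d(s_1,s_2)\,d(s_1,s_3)=O(D^2/n)$, so \cref{thm:triangle} yields $n=O(D^2/n)$ and again $n=O(D)$. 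The whole thing is a pigeonhole on Voronoi cell areas plus one invocation of \cref{thm:triangle}; no charging, no convex layers.

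Your route is not wrong in spirit, but as written it is incomplete. The two-to-one claim for the map $p\mapsto(d(s_1,p),d(s_2,p))$ is fine, but after that you are left needing $N_{\mathrm{pairs}}=O(D)$, and the paragraph you offer is a description of difficulties, not an argument. The convex-hull vertices are indeed $O(D)$, but as you note, peeling or level-curve slicing loses a factor of $D$ on interior points; ``each realized pair consumes a positive integer amount of detour that no other point can share'' is not obviously true and you do not say which function is being decremented or why distinct points decrement disjoint integer amounts. Solymosi's Euclidean argument relies on specific features that do not transparently survive for an arbitrary strictly convex $K$, and you have not indicated which of his steps you can salvage. In short, the missing idea is precisely the one the paper supplies: instead of controlling \emph{all} points relative to a diametral pair, find three points forming a triangle whose two incident side-lengths multiply to $O(D^2/n)$, and let \cref{thm:triangle} do the work.
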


\begin{proof}
We may assume without loss of generality that the unit ball of the given distance function~$d$ is contained in a Euclidean unit circle. For, if it does not, let $\xi$ be the maximum Euclidean distance from the origin to any point of the unit $d$-ball, and scale both $S$ and $d$ by $1/\xi$, leaving distances between the scaled points unchanged and allowing the $d$-ball to fit within a Euclidean unit circle. Let $\rho$ be the minimum Euclidean distance from the origin of the unit circle of~$d$; note that, if we treat the distance function as fixed, then~$\rho$ is a constant. Then the function measured by~$d$ is lower bounded by the Euclidean distance, and upper bounded by $\tfrac{1}{\rho}$ times the Euclidean distance. From the lower bound on $d$, the Euclidean diameter of $S$ is at most its $d$-diameter, $D$.

\begin{figure}[t]
\centering\includegraphics[scale=0.21]{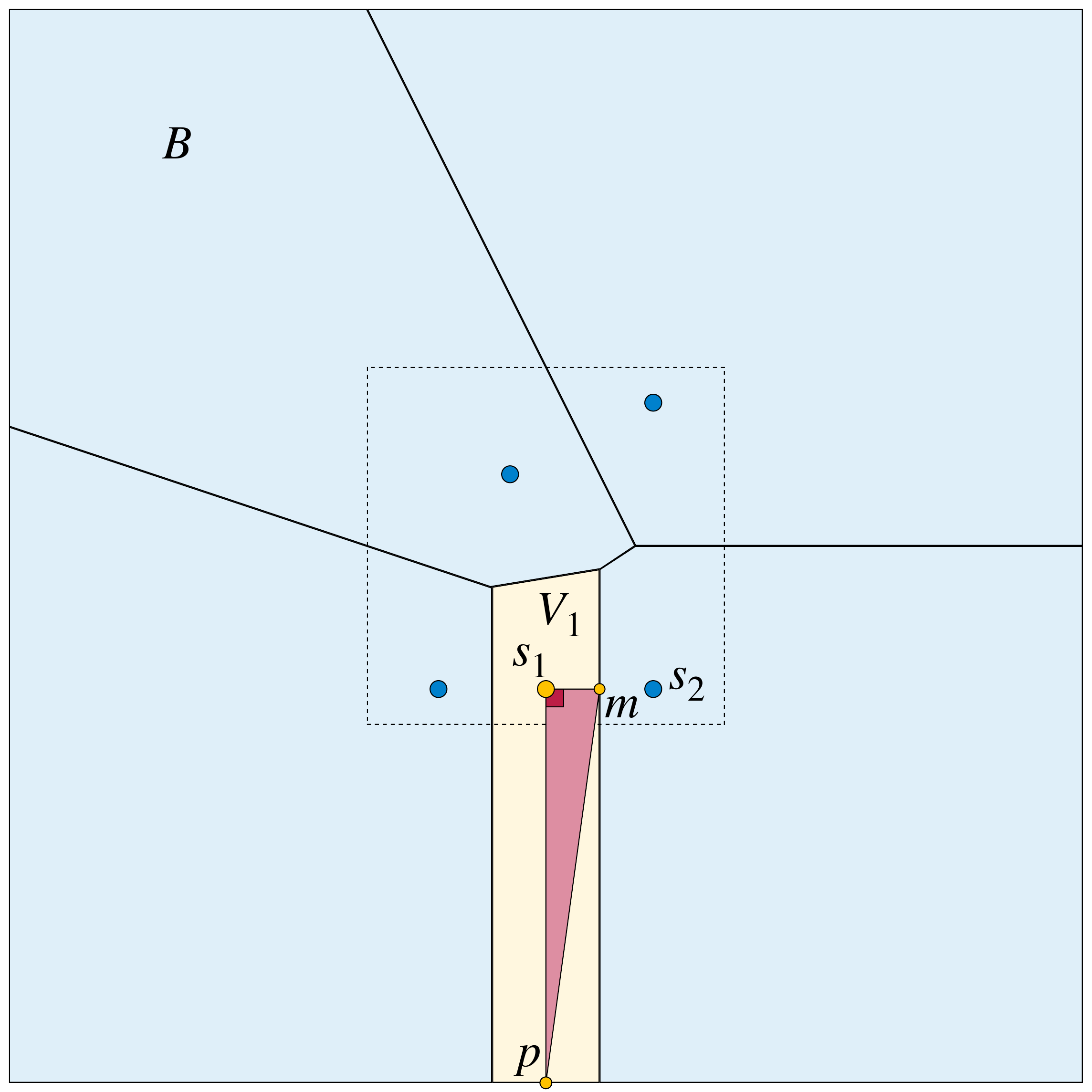}
\caption{Figure for the proof of \cref{thm:dist-fun-diam}: After enclosing the given points in a bounding box $B$ of side length three times the diameter $D$, $s_1$ is the point whose Euclidean Voronoi cell $V_1$ has the smallest intersection with $B$, $s_2$ is its nearest neighbor, $m$ is their midpoint, and right triangle $ps_1m$ is chosen to have as large an area as possible within $V_1\cap B$. In the case shown, $p$ is on the boundary of $B$, at distance greater than $D$ from $s_1$, from which it follows that $d(s_1,s_2)=O(D/n)$.}
\label{fig:small-triangle}
\end{figure}

Enclose $S$ in a square bounding box of (Euclidean) side length $D$, and center this within a larger square $B$ of side length $3D$, so that all points of $S$ are at Euclidean distance at least~$D$ from the boundary of~$B$. Construct the (Euclidean) Voronoi diagram of $S$, and let $s_1$ be the point of $S$ whose Voronoi cell $V_1$, intersected with $B$, has the smallest Euclidean area, $\le 9D^2/n=O(D^2/n)$.
Let $s_2$ be the Euclidean nearest neighbor of $s_1$ in $S$, let $m$ be the midpoint of $s_1$ and $s_2$, and construct a right triangle $ps_1m$ with right angle at $s_1$, contained in $V_1\cap B$, with the largest possible Euclidean area. (See \cref{fig:small-triangle} for an example.) We distinguish two cases:
\begin{itemize}
\item If $p$ is on the boundary of $B$, as shown in \cref{fig:small-triangle}, it is at Euclidean distance $\ge D$ from~$s_1$. In order for right triangle $ps_1s_2$ to have area $O(D^2/n)$, at most the area of the set $V_1\cap B$ that contains it, edge $s_1s_2$ must have Euclidean length $O(D/n)$, and therefore also $d(s_1,s_2)=O(D/n)$ (larger than the Euclidean length by at most the constant  factor~$\tfrac{1}{\rho}$). For this distance to be a nonzero integer, $n$ must be $O(D)$.
\item Otherwise, $p$ is on the boundary of $V_1$, and equidistant (in Euclidean distance) from $s_1$ and some site $s_3$ which cannot be collinear with $s_1s_2$. Therefore, the Euclidean distance from $s_1$ to $p$ is at least half the Euclidean distance from $s_1$ to $s_3$. Combining this inequality with the area of the triangle and the $\tfrac{1}{\rho}$ factor by which $d$ can increase over Euclidean distance gives the bound
$d(s_1,s_2)\cdot d(s_1,s_3)=O(D^2/n)$. It follows from \cref{thm:triangle} that $n=O(D^2/n)$ and therefore that $n=O(D)$.\qedhere
\end{itemize}
\end{proof}

The examples of $n\times n$ integer grids for the $L_1$ or $L_\infty$ distance show that the assumption of strict convexity is necessary for this result.

\section{Riemannian manifolds}
\label{sec:riemann}

In this section we prove a form of the Erd\H{o}s--Anning theorem for two-dimensional complete Riemannian manifolds. These can be defined intrinsically (without respect to an embedding) as smooth manifolds equipped with a smoothly varying inner product structure on their tangent space. ``Completeness'' in this context means that all Cauchy sequences converge; for instance, puncturing the Euclidean plane by removing the origin would produce a space that is not complete.
\iffull
By the Nash embedding theorem~\cite{Nas-AM-56} it is equivalent (and may be easier) to think of these as two-dimensional surfaces, smoothly embedded into a Euclidean space of bounded dimension, with distance measured by the Euclidean length of curves on the surface. Standard examples
\else
Examples
\fi
of such spaces include the Euclidean plane itself, the surface of any smooth convex body in three dimensions such as a sphere, and the hyperbolic plane.

One complication in these spaces is that their usual analogues of straight lines, \emph{geodesics}, are only locally shortest curves between their points: two or more points may belong to a common geodesic that does not contain the shortest curve between them. Indeed, in some cases a geodesic may cross itself or form a dense subset of its surface. For this reason, we will generally work with shortest curves (between some two points of the space) rather than geodesics. These curves may not be unique: for instance, for antipodal points on a sphere, there are infinitely many shortest curves, covering the whole sphere. By the Hopf--Rinow theorem~\cite{HopRin-CMH-31}, every complete Riemannian manifold has a shortest curve between every two points.  Thus, these are geodesic metric 
\iffull
spaces, and in particular the Lipschitz property of distances (\cref{obs:lipschitz}) applies to them. 
\else
spaces.
\fi

We will not assume that the surfaces we consider are orientable. One way to parameterize a two-dimensional surface, without assuming orientability, is by its \emph{Euler genus}, the number $2-\chi$ where $\chi$ is the Euler characteristic. It is possible for a Riemannian manifold to have infinite genus, but (except for \cref{ex:high-genus}, which motivates this limitation) we will only work with surfaces of bounded genus.

\subsection{Examples}

\iffull
We have the following analogue of \cref{ex:complete}:
\fi

\begin{example}
Consider any finite set of three or more points $S$ on a unit sphere, no three of which belong to a great circle. Then each two points of $S$ have a unique shortest curve which does not pass through other points of $S$. Perturb the sphere by placing a small smooth bump along each shortest curve, increasing the distance between its two points to a rational number without changing the other distances, and then scale the result to clear the denominators of these rational distances. The result is a complete Riemannian manifold of Euler genus zero with an arbitrary finite number of points at integer distances from each other.
\end{example}

It is necessary to assume that our Riemannian manifolds are complete, because of the following example:

\begin{example}
\label{ex:infinite-cone}
Consider the points coordinatized by two real numbers $r$ and $\theta$, with $r$ positive and $\theta$ arbitrary, and with the standard locally Euclidean geometry of the polar coordinate system. This can be thought of as a cone with an infinite angle at its removed apex~\cite{WebHofWol-PNAS-05}. Its Euler genus is zero (topologically it is just an open halfplane). When two of its points have $\theta$-coordinates that differ by at least $\pi$, their distance equals the sum of their $r$-coordinates (distances from the removed apex). Therefore, the points $\{(\tfrac12,i\pi)\mid i\in\mathbb{Z}\}$ form an infinite set at unit distance from each other. They have no shortest curves, because any such curve would pass through the missing apex point.
\end{example}

\begin{figure}[t]
\centering\includegraphics[width=0.35\textwidth]{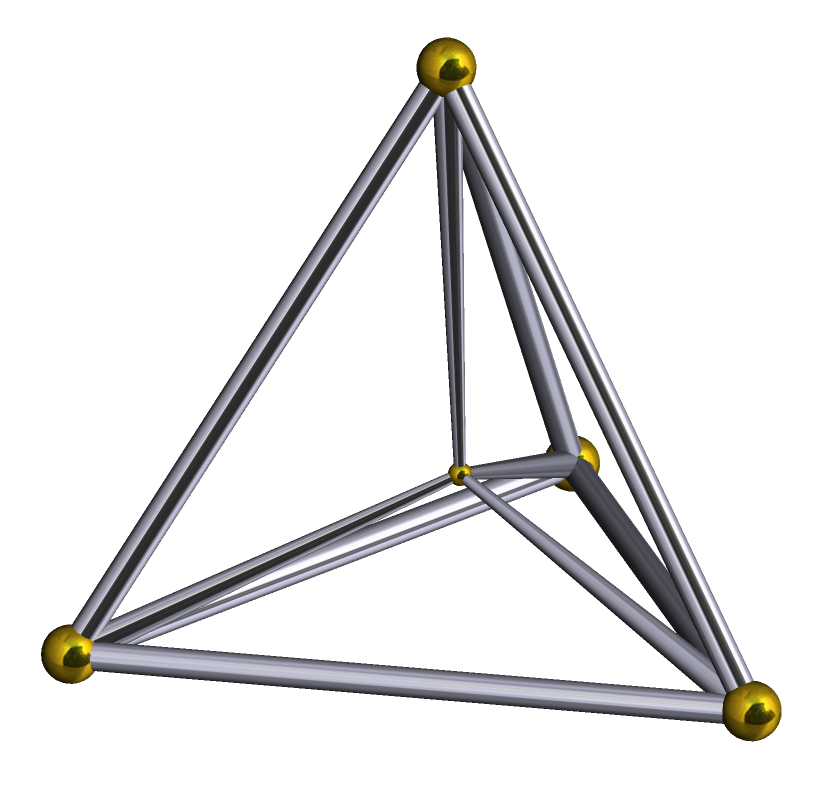}
\caption{Five small spheres connected in pairs by cylindrical tubes. Smoothing the sphere-cube edges produces a complete Riemannian manifold. Public domain image from \url{https://commons.wikimedia.org/wiki/File:Schlegel_wireframe_5-cell.png}.}
\label{fig:ball-and-stick}
\end{figure}

As the next example shows, to obtain a generalization of the Erdos--Anning theorem to complete Riemannian manifolds, we will also need to assume bounded genus, and our bounds on numbers of points in integer distance sets will need to depend on the genus.

\begin{example}
\label{ex:high-genus}
For any $n$, one may form a complete Riemannian manifold from $n$ spheres, connected by cutting out disks from pairs of spheres, attaching a cylindrical tube between the boundaries of the two disks, and smoothing the parts of the surface where the disks meet the tubes. This construction may be performed abstractly as a topological space; it is not necessary to embed the spheres and tubes into three-dimensional space. Let $S$ consist of one point per sphere, for spheres small enough that all distances within one sphere are less than~$\tfrac13$; then by adjusting the lengths of all the tubes, all distances between points of $S$ can be made equal to one. This construction produces arbitrarily large finite sets of points at unit distance from each other, with genus~$O(n^2)$. Because they all have the same distance, no three of these points belong to a shortest curve.

The connected sum of an infinite sequence of examples of this type for $n=4,5,6,\dots$, using tubes to connect the example for each $n$ to the examples for $n-1$ and $n+1$, forms a single complete Riemannian manifold of infinite genus in which one can find arbitrarily large (but nevertheless finite) sets of points that are all at unit distance from each other.

Although the topological structure of this example is complicated, it can be embedded in a distance-preserving way into a Riemannian metric on $\mathbb{R}^3$. For each $n$, choose the spheres and tubes of the $n$-point example to have a small enough radius that they can all be smoothly embedded within a unit ball of $\mathbb{R}^3$, coiling the tubes to cause them to have the desired lengths. Form the connected sum of these examples as before and embed it smoothly in $\mathbb{R}^3$ using disjoint unit balls for each term in the connected sum. Thicken the resulting smoothly embedded surface $S$ by a small amount (varying smoothly over the surface) chosen to be small enough that the thickened set remains smoothly embedded in $\mathbb{R}^3$, and give the resulting thickened set a Riemannian metric that agrees with the metric on $S$ (and the Euclidean distance of $\mathbb{R}^3$) in directions parallel to $S$, but that is longer in the direction perpendicular to $S$ by a factor that varies smoothly from very large for points on $S$ to one at the boundary of this thickened set. Then, for the remaining points of~$\mathbb{R}^3$, outside the thickened set, use the usual Euclidean distance. By choosing this perpendicular lengthening factor large enough, it is possible to ensure that escaping from $S$ to the boundary of the thickened set takes more than unit distance, and that distances within $S$ are preserved.
\end{example}

Richard Guy asked whether the \emph{equilateral dimension} of Riemannian manifolds, the maximum cardinality of a set of points in the manifold for which all distances are one, can be bounded by a function of the dimension of the manifold~\cite{Guy-AMM-83}. A positive answer was known only for the case of complete Riemannian manifolds of bounded curvature~\cite{Man-RH-13}. \cref{ex:infinite-cone} provides a counterexample for locally Euclidean and topologically simple but incomplete Riemannian 2-manifolds. \cref{ex:high-genus} provides a counterexample for complete Riemannian 2-manifolds of unbounded genus and for topologically simple complete Riemannian 3-manifolds. On the other hand, our generalization of the Erd\H{o}s--Anning theorem to complete Riemannian 2-manifolds of bounded genus also provides a bound on the equilateral dimension of these manifolds, as a function of their genus (\cref{cor:equilateral-dimension}).

Our proof of the Erd\H{o}s--Anning theorem for strictly convex distance functions relied on the non-planarity of $K_{3,3}$. For complete Riemannian manifolds of bounded genus we need to reach further for graphs that cannot be embedded, as the following example shows.

\iffull
\begin{figure}[t]
\centering\includegraphics[width=0.5\textwidth]{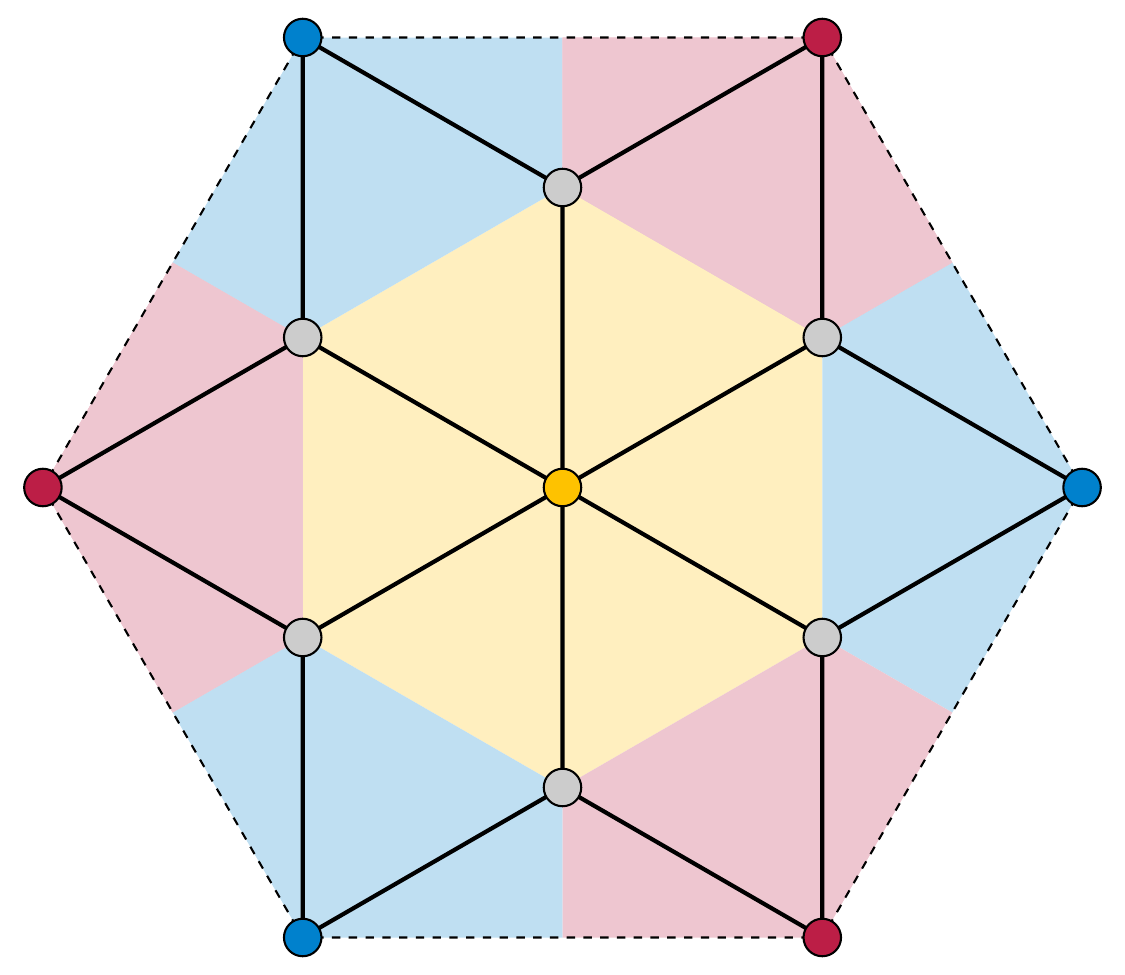}
\caption{An embedding of $K_{3,6}$ with shortest-curve edges on a hexagonal torus (\cref{ex:k36-hex-torus}). The colors indicate the Voronoi diagram of the points on the three-vertex side of the bipartition.}
\label{fig:k36-hex-torus}
\end{figure}

\begin{example}
\label{ex:k36-hex-torus}
Form a complete Riemannian manifold (the \emph{hexagonal torus}) by gluing opposite edges of a Euclidean regular hexagon.  The hexagon's six vertices become two points of the torus (red and blue in \cref{fig:k36-hex-torus}). Let $S$ consist of these two points and the center of the hexagon (yellow). Six points (gray) are equidistant from these three points. Connecting the three points of $S$ by shortest curves to these six points embeds $K_{3,6}$ on the torus.
\end{example}
\else
In the full version we prove properties of additively weighted Voronoi diagrams for Riemannian manifolds analogous to those of convex distance functions, culminating in a bound of $2g+2$ on the number of intersections of the cells of three sites with none of the three belonging to a geodesic between the other two.
\fi

\iffull
\subsection{Voronoi cells}
\label{sec:riemann-vor1} 

Our definitions of Voronoi diagrams for finite sets of sites, additively weighted Voronoi diagrams, and Voronoi cells apply to any metric space. We may define kernels and star-shaped sets, in complete Riemannian manifolds, by analogy to their Euclidean definitions. We define the \emph{kernel} of a set $S$ to be the set of points $p$ such that, for every point $q\in S$, every shortest curve between $p$ and $q$ belongs to $S$. We define $S$ to be \emph{star-shaped} if its kernel is non-empty. Then we have the following analogue of \cref{lem:star}. The proof, which depends only on the space being geodesic and on the Lipschitz property of distances, is unchanged.

\begin{lemma}
\label{lem:Riemann-star}
For additively weighted Voronoi diagrams of complete Riemannian manifolds, every non-empty Voronoi cell is star-shaped with its site in its kernel.
\end{lemma}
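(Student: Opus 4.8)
The plan is to reproduce the proof of \cref{lem:star} essentially verbatim, replacing the Euclidean line segment from a site to a point by an arbitrary shortest curve of the manifold. This adaptation works because that argument uses only two facts, both available here: that the space is geodesic, so that shortest curves between any two points exist (guaranteed by the Hopf--Rinow theorem), and that distances obey the Lipschitz property of \cref{obs:lipschitz}. Let $V_i$ be a non-empty cell with site $s_i$. I will prove the single claim that for every $q\in V_i$ and every point $x$ lying on a shortest curve between $s_i$ and $q$, we have $x\in V_i$; applied with $x=s_i$ this yields $s_i\in V_i$, and applied for all $q$ it shows that $s_i$ lies in the kernel of $V_i$, so that the (non-empty) kernel certifies $V_i$ as star-shaped.

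To prove the claim, I would fix $q\in V_i$, so that $d(q,s_i)+w_i\le d(q,s_j)+w_j$ for every other site $s_j$, and let $x$ be a point on a shortest curve from $s_i$ to $q$, so that $d(s_i,x)+d(x,q)=d(s_i,q)$. The heart of the argument is a short inequality chase. The triangle inequality (equivalently, the right half of \cref{obs:lipschitz} applied along the portion of the curve from $q$ to $x$) gives $d(x,s_j)\ge d(q,s_j)-d(q,x)$ for each $j$. Combining this with the membership of $q$ in $V_i$ and the shortest-curve identity yields
\[
d(x,s_j)+w_j \;\ge\; \bigl(d(q,s_i)+w_i\bigr)-d(q,x) \;=\; d(s_i,x)+w_i,
\]
which is exactly the condition for $x$ to lie in $V_i$.

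The only feature genuinely absent from the strictly convex setting is the possible non-uniqueness of shortest curves, together with the corresponding requirement in the Riemannian definition of the kernel that \emph{every} shortest curve between $s_i$ and $q$ remain in $V_i$. I expect this to pose no real obstacle: the inequality chase above invokes only the identity $d(s_i,x)+d(x,q)=d(s_i,q)$ for the particular point $x$, and never refers to which shortest curve $x$ lies on, so the conclusion holds uniformly over all such curves at once. Completeness enters solely through Hopf--Rinow, to guarantee that at least one shortest curve joins $s_i$ to each point of the non-empty cell, and hence that the kernel is genuinely non-empty rather than vacuous.
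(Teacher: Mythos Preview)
Your proposal is correct and follows exactly the approach the paper takes: the paper simply states that the proof ``depends only on the space being geodesic and on the Lipschitz property of distances'' and is unchanged from \cref{lem:star}, and your write-up spells out precisely that adaptation. Your explicit handling of the non-uniqueness of shortest curves is a welcome clarification that the paper leaves implicit.
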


As before, we define a \emph{degenerate site} to be a site that belongs to the Voronoi cell of another site, and a \emph{non-degenerate site} to be a site that is not degenerate. To formulate an analogue of \cref{lem:ray}, we also need a definition of a ray in a complete Riemannian manifold. To do this, we use the \emph{exponential map}. Given a point $p$ of the manifold, and a vector $v$ in the tangent space at $p$, the exponential map takes $v$ to $\exp_p v$, another point in the manifold obtained by following a geodesic from $p$ for a length and direction specified by $v$. For Riemannian manifolds in general, this is defined only for a neighborhood of the origin in the tangent space, but for complete Riemannian manifolds it is defined for the whole tangent space. For any point $p$ of the manifold, there is a number $r>0$, the \emph{injectivity radius}, such that the exponential map is an injection on vectors in the tangent space at $p$ with radius~$<r$, and not an injection for vectors with radius $\le r$. The exponential map is a smooth topological equivalence (a diffeomorphism) between the disk of vectors of radius~$<r$ and its image, a neighborhood of $p$ in the manifold. We define a \emph{ray} in a complete Riemannian manifold to be a set of the form
\[
\left\{ \exp_p cv \mid 0 < c \right\}
\]
for a fixed choice of $p$ (the starting point or \emph{apex} of the ray) and $v$ (a vector specifying the direction of the ray), and for all positive scalars $c$.

Another complication here is that, for convex distance functions, the intersection of a ray with a star-shaped set (having its apex in the kernel) is a line segment, and therefore a shortest curve. For  complete Riemannian manifolds, a ray may continue through a star-shaped set past the last point for which it forms a shortest curve, or may even have a disconnected intersection with the set. It may also pass through some points of the manifold more than once.  In a Voronoi diagram, we define the \emph{Voronoi segment} of a ray, having one of its sites~$s_i$ as an apex, to be the union of all shortest curves from points in $V_i$ to $s_i$ that lie along the same ray.

\begin{lemma}
\label{lem:Riemann-ray}
Let $s_i$ be a degenerate site of an additively weighted Voronoi diagram of a complete Riemannian manifold, with $s_i$ belonging to the Voronoi cell $V_j$ of another site $s_j$. Let $R_j$ be a ray with apex $s_j$ containing a shortest curve from $s_i$ to $s_j$, and let $R_i$ be a ray with apex $s_i$ continuing in the same direction, away from $s_i$. Then either $s_i$ is empty, or $V_i$ is the intersection of $R_i$ with the Voronoi segment of $R_j$.
\end{lemma}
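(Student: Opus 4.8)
The plan is to mirror the proof of \cref{lem:ray}: split into the case where $V_i$ is empty and the case where the two weighted distances coincide at $s_i$, and in the latter case prove the asserted set equality by a double inclusion. The guiding idea is that the Euclidean proof used only star-shapedness (here \cref{lem:Riemann-star}) and the equality clause of the Lipschitz property (\cref{obs:lipschitz}), both of which survive verbatim; the genuinely Riemannian phenomena---non-uniqueness of shortest curves and the possibility that a geodesic ray overshoots the range in which it minimizes or even revisits a point---are absorbed entirely by the Voronoi-segment construction.

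First I would dispose of the empty case. Since $s_i\in V_j$, evaluating the defining inequality of $V_j$ at the site $s_i$ gives $d(s_i,s_j)+w_j\le w_i$. If this is strict, then at $s_i$ the weighted distance to $s_j$ beats the weighted distance to $s_i$, so $s_i\notin V_i$; but by \cref{lem:Riemann-star} a nonempty $V_i$ would contain its site in its kernel, forcing $s_i\in V_i$, a contradiction, so $V_i=\emptyset$. In the remaining case $w_i=d(s_i,s_j)+w_j$, and this identity drives everything: for any point $p$, membership $p\in V_i$ yields in particular $d(p,s_i)+w_i\le d(p,s_j)+w_j$, which after substituting the identity and applying the triangle inequality collapses to the equality $d(p,s_j)=d(p,s_i)+d(s_i,s_j)$ while simultaneously showing $d(p,s_i)+w_i=d(p,s_j)+w_j$, so that $p\in V_i\cap V_j$.

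For the inclusion $V_i\subseteq R_i\cap(\text{Voronoi segment of }R_j)$ I would take $p\in V_i$ and apply the equality clause of \cref{obs:lipschitz} to $p$ together with the fixed shortest curve from $s_i$ to $s_j$ lying along $R_j$: the equality $d(p,s_i)-d(p,s_j)=-d(s_i,s_j)$ forces that shortest curve to be a sub-arc of a shortest curve from $p$ to $s_j$. Because a length-minimizing curve is a smooth geodesic, this shortest curve continues past $s_i$ along the continuation of $R_j$, namely $R_i$, so $p\in R_i$; and since the curve is a sub-arc of $R_j$ reaching from $s_j$ out to $p\in V_j$, the point $p$ lies in the Voronoi segment of $R_j$. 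Conversely, given $p\in R_i\cap(\text{Voronoi segment of }R_j)$, membership in the Voronoi segment places $p$ on a simple minimizing sub-arc of $R_j$ running from some $q\in V_j$ to $s_j$, while $p\in R_i$ places $p$ strictly beyond $s_i$ along $R_j$; as $s_i$ then lies on that sub-arc between $s_j$ and $p$, reading off arc lengths gives $d(p,s_j)=d(p,s_i)+d(s_i,s_j)$. Star-shapedness of $V_j$ (\cref{lem:Riemann-star}) shows the whole sub-arc, and hence $p$, lies in $V_j$, and combining $p\in V_j$ with the additive identity exactly as before gives $d(p,s_i)+w_i=d(p,s_j)+w_j\le d(p,s_k)+w_k$ for every site $s_k$, so $p\in V_i$.

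I expect the main obstacle to be ensuring that non-uniqueness of shortest curves and self-intersection of rays cause no harm: this is precisely the role of the Voronoi segment, which restricts $R_j$ to the arc on which it truly minimizes to points of $V_j$, so that arc length equals distance and the ray cannot re-enter from beyond its cut locus. A smaller point to record is the apex $s_i$ itself, which lies in $V_i$, in the Voronoi segment of $R_j$, and is the common endpoint of both rays; the intended reading of $R_i$ is therefore as a ray including its apex (as with the initial point of the ray in \cref{lem:ray}), and with that convention the two sides agree exactly.
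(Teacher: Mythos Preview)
Your proposal is correct and follows essentially the same approach as the paper: dispose of the empty case via \cref{lem:Riemann-star}, and in the equal-weight case exploit that along the common geodesic the weighted distances to $s_i$ and $s_j$ coincide, with the Voronoi segment marking exactly where that common value is the global minimum. The paper's own proof is terser, arguing directly that weighted curve lengths along $R_i$ and $R_j$ stay equal and then invoking the Voronoi-segment definition, whereas you unpack this into an explicit double inclusion driven by the equality clause of \cref{obs:lipschitz}; the underlying mechanism is the same.
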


\begin{proof}
As in the proof of \cref{lem:ray}, if the weighted additive distance from $s_i$ to $s_j$ is less than the weighted distance from $s_i$ to itself, then $s_i$ does not belong to its own Voronoi cell and by \cref{lem:Riemann-star} the cell must be empty. Otherwise, at $s_i$, the weighted distances to $s_i$ and~$s_j$ are equal. Because the lengths of curves along rays increase at the same rate as distance traveled along a ray, the weighted lengths of curves along $R_i$ and $R_j$ remain equal for all points of $R_i$. At every point of the Voronoi segment, this weighted distance is the minimum distance to a site, and at every point of the past the endpoint of the Voronoi segment, it is not.
\end{proof}

By repeatedly using \cref{lem:Riemann-ray} to replace the Voronoi cells of degenerate sites with supersets that are also Voronoi cells, until no more such replacements can be performed, we obtain an analogue of \cref{cor:non-degenerate-cover}:

\begin{corollary}
In an additively weighted Voronoi diagram of a complete Riemannian manifold, every point belongs to the Voronoi cell of at least one non-degenerate site.
\end{corollary}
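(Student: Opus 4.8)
The plan is to establish the covering property directly, by a descent argument that realizes the ``repeated replacement'' idea but accounts for the extra structure of the Riemannian ray lemma. First I would record the containment implicit in \cref{lem:Riemann-ray}: when $s_i$ is a degenerate site with $s_i\in V_j$ and $V_i$ is nonempty, the lemma describes $V_i$ as the intersection of the ray $R_i$ with the Voronoi segment of $R_j$. Since the Voronoi segment of $R_j$ is by definition a union of shortest curves from points of $V_j$ to $s_j$, and \cref{lem:Riemann-star} places $s_j$ in the kernel of the star-shaped cell $V_j$, every such shortest curve lies inside $V_j$. Hence the Voronoi segment of $R_j$, and therefore $V_i$, is contained in $V_j$. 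This is the one step where the Riemannian setting genuinely differs from the convex-distance case, in which $V_i\subseteq V_j$ was immediate because the cell of a degenerate site was simply an intersection $R\cap V_j$.

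Next I would attach a monotone quantity to the process so that it is guaranteed to terminate. For any degenerate site $s_i$ with $s_i\in V_j$, comparing the weighted distance from $s_i$ to $s_j$ against its weighted self-distance gives $d(s_i,s_j)+w_j\le w_i$; because $s_i$ and $s_j$ are distinct points of the manifold, $d(s_i,s_j)>0$ and hence $w_j<w_i$. Thus passing from $s_i$ to the site $s_j$ whose cell contains it strictly decreases the associated weight.

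The proof itself then runs as follows. Fix an arbitrary point $p$. The minimum of the finitely many weighted distances $d(p,s_k)+w_k$ is attained at some site $s_{i_0}$, so $p\in V_{i_0}$, and in particular $V_{i_0}$ is nonempty. If $s_{i_0}$ is non-degenerate we are done. Otherwise $s_{i_0}\in V_{i_1}$ for some site $s_{i_1}$, and since $V_{i_0}$ is nonempty we are in the second alternative of \cref{lem:Riemann-ray}, which by the paragraph above gives $p\in V_{i_0}\subseteq V_{i_1}$ while $w_{i_1}<w_{i_0}$. Iterating yields a chain $s_{i_0},s_{i_1},s_{i_2},\dots$ of sites, each of whose cells contains $p$, with strictly decreasing weights; the strict decrease forces the sites to be distinct, so by finiteness the chain must halt, and it can only halt at a non-degenerate site $s_{i_m}$ with $p\in V_{i_m}$. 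As $p$ was arbitrary, every point lies in the cell of a non-degenerate site.

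I expect the main obstacle to be the first paragraph, the verification that $V_i\subseteq V_j$, because the Riemannian ray lemma does not hand this to us as directly as the convex case does: a ray may leave its cell and re-enter it, or trace the cell only as far as the end of a Voronoi segment, so the containment must be extracted through the star-shapedness of $V_j$ rather than read off the geometry of a straight segment. Once this containment and the weight-descent are in place, termination and coverage follow mechanically.
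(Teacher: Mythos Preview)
Your argument is correct and follows the same ``repeated replacement'' route the paper sketches; you simply make explicit the containment $V_i\subseteq V_j$ (via the Voronoi-segment description and star-shapedness) and supply a clean termination device through the strict weight drop $w_j<w_i$, which the paper leaves implicit. There is nothing substantively different here from the paper's approach—just a more careful execution of it.
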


We also have the following analogue of \cref{lem:non-degenerate-interior}:

\begin{lemma}
\label{lem:Riemann-ndgi}
A site $s_i$ of an additively weighted Voronoi diagram of a 2-dimensional complete Riemannian manifold is non-degenerate if and only if it is an interior point of its cell $V_i$.
\end{lemma}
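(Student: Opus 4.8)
The plan is to transcribe the proof of \cref{lem:non-degenerate-interior} almost verbatim, substituting \cref{lem:Riemann-ray} for \cref{lem:ray}, and then to check the single place where the Riemannian geometry genuinely differs from the Euclidean setting. As in the Euclidean case, I would prove the two directions of the biconditional separately; the reverse direction is purely topological and carries over unchanged, while the forward direction is where the one new subtlety appears.

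For the forward direction I would assume that $s_i$ is an interior point of its cell $V_i$ and deduce non-degeneracy. Suppose to the contrary that $s_i$ is degenerate. Since an interior point lies in the set, $s_i\in V_i$ and so $V_i$ is nonempty; \cref{lem:Riemann-ray} then forces $V_i$ to equal the intersection of the ray $R_i$ with the Voronoi segment of the ray $R_j$, so in particular $V_i$ is contained in the single geodesic ray $R_i$ emanating from $s_i$. The key observation, which I expect to be the main obstacle and the only step departing from the Euclidean argument, is that this image still has empty interior in the $2$-manifold even though, unlike a Euclidean ray, it may self-intersect or be dense in its surface. I would establish this by writing $R_i=\bigcup_{n\ge 1}\gamma([1/n,n])$, where $\gamma(c)=\exp_{s_i}(cv)$ parameterizes the ray: each restriction of the smooth curve $\gamma$ to a compact interval is Lipschitz, and a Lipschitz image of a one-dimensional set into a $2$-manifold has two-dimensional measure zero, so $R_i$ is a countable union of measure-zero sets and hence has empty interior. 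A cell of empty interior cannot have $s_i$ as an interior point, which is the desired contradiction; therefore $s_i$ is non-degenerate.

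For the reverse direction I would show that a non-interior site is degenerate by exactly the quantifier-reversal argument of \cref{lem:non-degenerate-interior}, which uses nothing beyond three facts valid in any metric space with finitely many weighted sites: the cells are closed (being defined by non-strict inequalities between continuous distance functions), they cover the manifold (with finitely many sites the minimum weighted distance is attained at every point), and there are only finitely many of them. If $s_i$ is not interior to $V_i$, every neighborhood of $s_i$ contains a point outside $V_i$, and by covering that point lies in some other cell; by finiteness a single fixed cell $V_j$ meets every neighborhood of $s_i$, and closedness of $V_j$ then gives $s_i\in V_j$, so $s_i$ is degenerate.

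The whole difficulty is thus concentrated in the measure-zero verification of the forward direction: once one knows that a Riemannian ray, despite its possible topological wildness, is still a one-dimensional set with empty interior, both implications follow by copying the strictly-convex proof. I would flag explicitly in the write-up that this is the precise point at which the proof of \cref{lem:non-degenerate-interior} must be reexamined rather than merely restated.
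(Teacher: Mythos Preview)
Your proposal is correct and follows exactly the route the paper takes: the paper's entire proof of \cref{lem:Riemann-ndgi} is the single sentence ``The proof is the same as that for \cref{lem:non-degenerate-interior},'' and you have faithfully reproduced that proof, substituting \cref{lem:Riemann-ray} for \cref{lem:ray}.

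The one point you single out---that a Riemannian ray, unlike a Euclidean one, might be dense and so needs an argument to show it has empty interior---is a legitimate gap in the paper's ``same as before,'' and your measure-zero argument via Lipschitz images is a clean way to close it. A slightly lighter alternative, if you prefer to avoid measure theory: since the exponential map at $s_i$ is a diffeomorphism on a disk of radius equal to the injectivity radius, any sufficiently small neighborhood of $s_i$ is identified with a Euclidean disk in which $R_i$ is a single straight ray through the origin; hence $V_i\subseteq R_i$ already fails to contain a neighborhood of $s_i$, and no global control on the ray is needed. Either way, the structure of the argument is identical to the paper's.
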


The proof is the same as that for \cref{lem:non-degenerate-interior}.

We interpreted \cref{lem:interior} as stating that each ray with a non-degenerate site as its initial point passes through at most one boundary point of its Voronoi cell. This is not true for complete Riemannian manifolds, because rays can re-enter the Voronoi cell after the end of the Voronoi segment of the ray. On the other hand, a Voronoi segment of bounded length may have its endpoint in the interior of a cell, and not pass through any boundary points of the cell. Nevertheless, we have the following analogue of \cref{lem:interior}:

\begin{lemma}
\label{lem:Riemann-interior}
Let $s_i$ be a non-degenerate site of an additively weighted Voronoi diagram of a 2-dimensional complete Riemannian manifold, let $L$ be a Voronoi segment in $V_i$ having $s_i$ as an endpoint, and let $p$ be a point in the relative interior of $L$. Then $p$ is interior to $V_i$.
\end{lemma}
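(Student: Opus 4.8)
The plan is to follow the case analysis of \cref{lem:interior} as closely as possible, substituting the Riemannian ray and Voronoi-segment machinery for the straight line segments of the Euclidean argument. Suppose for contradiction that $p$ is not interior to $V_i$. As in the Euclidean proof, finiteness of the site set lets me fix a single other site $s_j$ such that every neighborhood of $p$ meets $V_j\setminus V_i$; since cells are closed, $p\in V_j$, so $p$ has equal weighted distance to $s_i$ and to $s_j$. Let $R$ be the ray with apex $s_i$ along which $L$ lies. Because $p$ lies in the relative interior of $L$, there is a point $p'$ of $L$ beyond $p$ (farther from $s_i$ along $R$); since $L$ is a Voronoi segment, the portion of $R$ from $s_i$ to $p'$ is a shortest curve, so $d(s_i,p')=d(s_i,p)+d(p,p')$, the last term being the arc length of the sub-curve from $p$ to $p'$. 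I would then apply the Lipschitz property (\cref{obs:lipschitz}) to the curve running along $R$ from $p'$ back to $p$, obtaining $d(s_j,p')-d(s_j,p)\le d(p,p')$.

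If this inequality is strict, then a direct computation using the equality of weighted distances at $p$ shows that the weighted distance from $p'$ to $s_j$ is strictly smaller than that to $s_i$, so $p'\notin V_i$; but $p'\in L\subseteq V_i$, a contradiction. This is the analogue of the first case of \cref{lem:interior}.

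The remaining case is equality in the Lipschitz bound, which by \cref{obs:lipschitz} happens only when the curve along $R$ traces without backtracking a subset of a shortest curve from $p'$ to $s_j$. Here the argument departs from the Euclidean one: I would invoke the fact that shortest curves in a complete Riemannian manifold are geodesics, and that a geodesic agreeing with the geodesic carrying $R$ along a nondegenerate arc must coincide with it. Hence the shortest curve from $p'$ to $s_j$ runs along $R$ (extended through $s_i$ if necessary), forcing $s_j$ to lie on this geodesic. This splits into two sub-cases mirroring the last two cases of \cref{lem:interior}: if $s_j$ lies on the shortest curve from $s_i$ to $p$ (a subset of $L\subseteq V_i$), then $s_j\in V_i$ is degenerate and \cref{lem:Riemann-ray} gives $V_j\subseteq V_i$, contradicting the presence of points of $V_j\setminus V_i$ near $p$; if instead $s_i$ lies between $p$ and $s_j$ along the geodesic, then the shortest curve from $p$ to $s_j$ passes through $s_i$, and star-shapedness of $V_j$ (\cref{lem:Riemann-star}, with $p\in V_j$) places $s_i\in V_j$, contradicting the non-degeneracy of $s_i$.

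I expect the main obstacle to be the equality case: unlike the Euclidean setting, rays may leave and re-enter a cell, geodesics may self-intersect, and \emph{collinearity} has no global meaning, so the crux is to convert the metric equality condition of \cref{obs:lipschitz} rigorously into the purely geometric statement that $s_j$ lies on the single geodesic carrying $R$. Once this localization is established, the two sub-cases reduce cleanly to \cref{lem:Riemann-ray} and \cref{lem:Riemann-star} exactly as in the Euclidean proof, and the shortest-curve (rather than straight-segment) formulation of the Voronoi segment supplies the rate-of-increase identity $d(s_i,p')=d(s_i,p)+d(p,p')$ needed for the strict-inequality case.
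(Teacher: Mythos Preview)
Your proposal is correct, and it faithfully transplants the three-case structure of the Euclidean \cref{lem:interior} to the Riemannian setting. The paper's own proof reaches the same contradiction but organizes the argument differently and more directly. Instead of applying the Lipschitz bound and then splitting on strict inequality versus equality, the paper first disposes of the collinear situation: it observes that the shortest curves from $p$ to $s_i$ and to $s_j$ cannot leave $p$ along the same ray, since then one site would lie on the shortest curve from $p$ to the other, and the equal-weighted-distance condition at $p$ would place one site in the other's Voronoi cell, forcing a degeneracy. With that excluded, for any point $q$ past $p$ on $L$ the paper compares two curves of equal length: the arc $q\to s_i$ along $L$, and the concatenation $q\to p$ along $L$ followed by the shortest curve $p\to s_j$. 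The latter is not geodesic at $p$ (it turns or backtracks there) and can be shortcut in a neighborhood of $p$, so $d(q,s_j)$ is strictly less than $d(q,s_i)$; after adding weights, $q\notin V_i$, contradicting $q\in L$.

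The payoff of the paper's ordering is exactly that it sidesteps the obstacle you identified: because collinearity is ruled out \emph{before} the comparison at $q$, the paper never needs to invoke uniqueness of geodesic continuation or to localize $s_j$ on the geodesic carrying $R$; it only needs that the two tangent directions at $p$ differ. Your route works too, but the equality case requires the extra step of arguing that the shortest curve from $p'$ to $s_j$, once it agrees with $R$ on a nondegenerate arc, coincides with the extension of $R$ through $s_i$, which is where self-intersection and global geodesic behavior could otherwise intrude.
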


\begin{proof}
As in the proof of \cref{lem:interior}, the only other possibility is that there is some other Voronoi cell $V_j$ also containing $p$, such that every neighborhood of $p$ contains points of $V_j\setminus V_j$. Then the shortest curves from $p$ to $s_i$ and to $s_j$ cannot follow the same ray from~$p$; otherwise,~$s_i$ would be degenerate (contradicting the assumption in the statement of the lemma) or $s_j$ would be degenerate (contradicting the assumption that $V_j\setminus V_i$ is nonempty). For any point~$q$ past~$p$ on~$L$, there would exist curves of equal length from~$q$ to~$s_i$ along~$L$, and from~$q$ to~$s_j$ along~$L$ t~ $p$ and then from~$p$ to~$s_j$; but because the curve to $s_j$ turns at $p$ it cannot be a shortest curve as it can be shortcut near $p$, so $q$ is closer to $s_j$ than $s_i$. But this would contradict the definition of $L$ as a Voronoi segment.
\end{proof}
\fi

\iffull
\subsection{Intersections of non-degenerate cells}
\label{sec:riemann-vor2} 

Continuing the analogy from convex distance functions to Riemannian manifolds, we have the following analogue of \cref{lem:non-overlapping}. The proof is somewhat different because of the possibility that a point interior to a Voronoi cell may nevertheless be an endpoint of a Voronoi segment.

\begin{lemma}
\label{lem:Riemann-non-overlapping}
Let $s_i$ and $s_j$ be non-degenerate sites of an additively weighted Voronoi diagram for a 2-dimensional complete Riemannian manifold. Then the interior of $V_i$ is disjoint from~$V_j$, and vice versa.
\end{lemma}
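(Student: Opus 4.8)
The plan is to argue by contradiction. Suppose some point $p$ is interior to $V_i$ and yet also belongs to $V_j$, with $s_i$ and $s_j$ both non-degenerate. Since $p\in V_i\cap V_j$, the two weighted distances agree at $p$, so $d(p,s_i)+w_i=d(p,s_j)+w_j$; and since $p$ is interior to $V_i$ there is a metric ball $B(p,\delta)\subseteq V_i$ throughout which $d(\cdot,s_i)+w_i\le d(\cdot,s_j)+w_j$. The goal is to show that a shortest curve from $p$ to $s_i$ and a shortest curve from $p$ to $s_j$ leave $p$ in the same direction. Such a coincidence puts the nearer of $s_i,s_j$ on the shortest curve from $p$ to the farther one, and the computation in the last paragraph then places that nearer site inside the farther site's Voronoi cell, contradicting non-degeneracy exactly as in the proof of \cref{lem:Riemann-interior}.

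The Euclidean proof of \cref{lem:non-overlapping} reached its contradiction by extending the segment $ps_i$ past $p$ away from $s_i$, staying in $V_i$ by interiority, so that distance to $s_i$ grew at unit rate while distance to $s_j$ grew more slowly. In the Riemannian setting this move fails precisely when $p$ is the endpoint of the Voronoi segment of the ray through $s_i$ and $p$, that is, a cut point of $s_i$: beyond $p$ the geodesic is no longer a shortest curve to $s_i$, so distance to $s_i$ grows too slowly and no contradiction appears. I would therefore probe in the opposite sense. Let $\gamma$ be a shortest curve from $s_j$ to $p$ (which exists by completeness), arriving at $p$ with unit tangent $b$, and set $c(t)=\exp_p(tb)$, so that for small $t<0$ the point $c(t)$ lies on $\gamma$ between $s_j$ and $p$ and, in particular, lies in $B(p,\delta)\subseteq V_i$. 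Three facts then combine: first, $d(c(t),s_j)+w_j=\bigl(d(p,s_j)+w_j\bigr)+t$ because $c(t)$ is on a shortest curve to $s_j$; second, $d(c(t),s_i)+w_i\le d(c(t),s_j)+w_j$ because $c(t)\in V_i$; and third, $d(c(t),s_i)\ge d(p,s_i)+t$ by the Lipschitz property (\cref{obs:lipschitz}). Using $d(p,s_i)+w_i=d(p,s_j)+w_j$, these sandwich $d(c(t),s_i)$ and force the equality $d(c(t),s_i)=d(p,s_i)+t=d(p,s_i)-d(p,c(t))$.

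This equality is the triangle equality $d(p,c(t))+d(c(t),s_i)=d(p,s_i)$, so $c(t)$ lies on a shortest curve from $p$ to $s_i$; hence the geodesic leaving $p$ in the direction $-b$ is, for a short distance, a shortest curve from $p$ toward $s_i$. But $-b$ is also the direction in which the reverse of $\gamma$, a shortest curve from $p$ to $s_j$, leaves $p$. The two shortest curves thus share the initial velocity $-b$, so they coincide as geodesics and $s_i,s_j$ both lie on a single ray from $p$. Writing the weighted-distance equality at $p$ for the nearer site, say $s_i$ with $d(p,s_j)=d(p,s_i)+d(s_i,s_j)$, gives $w_i=d(s_i,s_j)+w_j$, so the weighted distance from $s_i$ to $s_j$ equals its weighted self-distance and $s_i\in V_j$, contradicting non-degeneracy; the symmetric statement follows by exchanging $i$ and $j$. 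The main obstacle, and the reason this proof departs from the Euclidean one, is exactly the cut-point (Voronoi-segment-endpoint) phenomenon that blocks extension past $p$; the device that overcomes it is to move from $p$ toward $s_j$ rather than away from $s_i$, which converts the interiority hypothesis directly into the forced equality $d(c(t),s_i)=d(p,s_i)-d(p,c(t))$ without ever extending a geodesic past a cut point.
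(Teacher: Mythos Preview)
Your proof is correct and follows essentially the same route as the paper's: both argue by contradiction, take a point $q$ (your $c(t)$) on a shortest curve from $p$ toward $s_j$ that lies in the interior neighborhood $B(p,\delta)\subseteq V_i$, and use the Lipschitz property together with membership in $V_i$ to force the triangle equality, placing $q$ on a shortest curve from $p$ to $s_i$ and hence making one site degenerate. Your write-up is more explicit than the paper's---you spell out the sandwich inequality and the geodesic-uniqueness step via the exponential map, whereas the paper compresses this into the sentence ``the weighted distance to $s_j$ decreases more quickly along curve $pq$ than the weighted distance to $s_i$''---but the underlying idea is the same: move toward $s_j$, not away from $s_i$, precisely to avoid the cut-point obstruction you correctly identify.
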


\begin{proof}
Suppose for a contradiction that there existed a point $p$ interior to $V_i$ that also belonged to $V_j$. Then any shortest curves $ps_i$ and $ps_j$ must be disjoint except for their shared endpoint, for otherwise one would be a subset of the other, and one of the two sites would belong to the Voronoi cell of the other, contradicting the assumption that both are non-degenerate. Because $p$ is interior to $V_i$, there must be some point $q\ne p$ on segment $ps_j$ that also belongs to $V_i$. But by the Lipschitz property of distances, the weighted distance to to $s_j$ decreases more quickly along curve $pq$ than the weighted distance to $s_i$, a contradiction.
\end{proof}

Continuing the analogy, we have:

\begin{lemma}
\label{lem:Riemann-disjoint-segments}
Let $s_i$ and $s_j$ be distinct non-degenerate sites of an additively weighted Voronoi diagram of a 2-dimensional complete Riemannian manifold, and let $p$ and $q$ be distinct points of $V_i$ and $V_j$ respectively. Then line segments (or arcs)  $ps_i$ and $qs_j$ are disjoint.
\end{lemma}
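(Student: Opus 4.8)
The plan is to follow the proof of \cref{lem:cross} almost verbatim, substituting at each step the Riemannian analogue of the tool used there. I would argue by contradiction: suppose some point $x$ lies on both a shortest curve $ps_i$ and a shortest curve $qs_j$. The target configuration is one in which $x$ lies in the relative interior of at least one of the two curves, since then \cref{lem:Riemann-interior} places $x$ in the interior of the corresponding cell, while \cref{lem:Riemann-star} keeps $x$ inside the other cell, directly contradicting \cref{lem:Riemann-non-overlapping}. So the first job is to show that $x$ cannot fail to be a relative-interior point of either curve, and the second is to certify that \cref{lem:Riemann-interior} actually applies.

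First I would rule out the degenerate possibility that $x$ is a shared endpoint of the two curves. The endpoints of $ps_i$ are $p$ and $s_i$, and those of $qs_j$ are $q$ and $s_j$, so a common endpoint would force one of the coincidences $p=q$, $p=s_j$, $q=s_i$, or $s_i=s_j$. The first and last are excluded by hypothesis, and $p=s_j$ would put $s_j\in V_i$ while $q=s_i$ would put $s_i\in V_j$, each contradicting the non-degeneracy of the site involved. Hence $x$ is not a common endpoint, so it lies in the relative interior of at least one of the two curves; by the symmetry of the statement I may assume it lies in the relative interior of $ps_i$, and treat the reflected case identically.

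Next I would locate $x$ inside a Voronoi segment so that \cref{lem:Riemann-interior} can be invoked. The chosen shortest curve $ps_i$ runs along a single ray with apex $s_i$, and since $p\in V_i$ this curve is an initial piece of the Voronoi segment $L$ of that ray, which therefore has $s_i$ as one endpoint and a far endpoint at distance at least $d(s_i,p)$ from $s_i$. Because $x$ lies strictly between $s_i$ and $p$, we have $0<d(s_i,x)<d(s_i,p)$, so $x$ is neither endpoint of $L$ and hence lies in its relative interior. Then \cref{lem:Riemann-interior} gives that $x$ is interior to $V_i$. On the other hand, $x$ lies on the shortest curve $qs_j$, which is contained in $V_j$ by \cref{lem:Riemann-star} (the site $s_j$ lies in the kernel of $V_j$). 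Thus $x$ is a point of $V_j$ interior to $V_i$, contradicting \cref{lem:Riemann-non-overlapping}, and no such $x$ can exist.

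The main obstacle, and the only genuine departure from the Euclidean argument, is this middle step. In a complete Riemannian manifold a ray may leave and re-enter a cell, so I cannot simply say that a relative-interior point of the geodesic $ps_i$ is automatically interior to $V_i$; the correct object to pass to is the \emph{Voronoi segment} rather than the bare ray. The care required is exactly in verifying that an interior point of $ps_i$ lands in the relative interior of that Voronoi segment, which I expect to follow cleanly from the fact that $ps_i$ is itself a shortest curve from a point of $V_i$ to $s_i$ and so forms an initial subsegment of $L$ whose interior avoids both endpoints of $L$.
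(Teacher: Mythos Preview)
Your proposal is correct and follows essentially the same approach as the paper, which gives only a two-sentence proof invoking \cref{lem:Riemann-interior} and \cref{lem:Riemann-non-overlapping}. Your additional care in the endpoint analysis and in passing from the shortest curve $ps_i$ to the containing Voronoi segment before invoking \cref{lem:Riemann-interior} merely makes explicit what the paper leaves implicit.
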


\begin{proof}
Otherwise, one of these two segments would contain a point in the relative interior of the other segment or arc. By \cref{lem:Riemann-interior} this point would be interior to its Voronoi cell, contradicting \cref{lem:Riemann-non-overlapping}.
\end{proof}

To formulate an analogue of \cref{lem:two} we must first account for examples like \cref{ex:k36-hex-torus} in which triples of Voronoi cells have more than two points of intersection, corresponding to drawings of graphs $K_{3,a}$ with $a>2$ and with all edges drawn as shortest curves.

\begin{lemma}
\label{lem:genus-bipartite}
On a surface of Euler genus $g$, the maximum number $a$ such that $K_{3,a}$ can be drawn without crossings on the surface is at most $2g+2$.
\end{lemma}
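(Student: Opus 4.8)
The plan is to prove this via Euler's formula for graphs embedded on surfaces of Euler genus $g$. Recall that if a connected graph with $V$ vertices, $E$ edges, and $F$ faces is embedded without crossings on a surface of Euler genus $g$, then $V - E + F \ge 2 - g$ (with equality for cellular embeddings, and the inequality suffices for an upper bound on edges). The strategy is to assume $K_{3,a}$ embeds without crossings and derive an inequality that forces $a \le 2g+2$.

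First I would set up the count for $G = K_{3,a}$, which has $V = a+3$ vertices and $E = 3a$ edges. The key structural fact is that $K_{3,a}$ is bipartite and simple, so it contains no triangles; therefore every face of any embedding is bounded by a closed walk of length at least $4$. Since each edge borders at most two faces, summing face lengths gives $2E \ge 4F$, i.e. $F \le E/2 = 3a/2$. Substituting into the Euler bound $F \ge 2 - g - V + E$ (rearranged from $V - E + F \ge 2 - g$) yields
\[
2 - g - (a+3) + 3a \le F \le \frac{3a}{2},
\]
so $2a - 1 - g \le \tfrac{3a}{2}$, which simplifies to $\tfrac{a}{2} \le g + 1$, and hence $a \le 2g + 2$. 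This is exactly the claimed bound.

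One point that needs care is the handling of the genus-zero base case and the direction of the Euler inequality: for a possibly non-cellular or disconnected embedding one has $V - E + F \ge 2 - g$ rather than equality, and this is the direction we need (it makes $F$ large, pushing the lower bound on $F$ in the right place to contradict the girth bound). Another small check is that the girth argument $2E \ge 4F$ requires every face to have a boundary walk of length at least four; this holds because $K_{3,a}$ has girth at least $4$ for all $a \ge 2$, and for $a \le 1$ the statement is trivial. I expect the main (though still modest) obstacle to be stating the Euler-characteristic inequality in the correct generality for surfaces parameterized by Euler genus, including the non-orientable case — but since Euler genus is defined precisely so that $V - E + F \ge 2 - g$ holds uniformly across orientable and non-orientable surfaces, this bookkeeping goes through without incident.
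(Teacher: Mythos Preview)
Your argument is correct and essentially identical to the paper's own proof: both combine the Euler inequality $V-E+F\ge 2-g$ with the girth bound $2E\ge 4F$ (from bipartiteness) and substitute $V=a+3$, $E=3a$ to obtain $a\le 2g+2$. The paper likewise remarks that the inequality form of Euler's formula handles non-cellular embeddings, matching your caveat.
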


\begin{proof}
In any drawing of $K_{3,a}$ on any surface, all faces of the drawing have at least four sides, and each edge forms exactly two sides of faces (not necessarily distinct). Therefore, if $V$, $E$, and $F$ are the sets of vertices, edges, and faces of the drawing, we have  
\[
2|E|\ge 4|F|.\]
We also have
\[
|V|-|E|+|F|\ge 2-g
\]
by the definition of the Euler characteristic, where the inequality comes from the fact that we are not assuming that the faces of the drawing are disks. Using the first inequality to eliminate $|F|$ from the second gives
\[
|V|-\frac12 |E|\ge 2-g.
\]
or equivalently
\[
|E|\le 2|V| + 2g - 4.
\]
The graph $K_{3,a}$ has $|V|=3+a$ and $|E|=3a$. In order to have $3a\le 2(3+a)+2g-4$,
we must have $a\le 2g+2$.
\end{proof}

This is a known result; see, e.g., Bouchet~\cite{Bou-JCTB-78}, who credits the complete characterization of the genus of complete bipartite graphs to several works of Ringel from the mid-1960s. The Euler genus zero case of this lemma is the fact that $K_{3,3}$ is non-planar, while the Euler genus two case includes the fact that $K_{3,7}$ cannot be drawn on a torus. In these cases, the lemma is tight: $K_{3,2}$ is planar and \cref{ex:k36-hex-torus} shows that $K_{3,6}$ is toroidal.

Finally, this allows us to bound the number of triple intersections of Voronoi cells:

\begin{lemma}
\label{lem:Riemann-triples}
Let $s_i$, $s_j$, and $s_k$ be three sites of an additively weighted Voronoi diagram of a 2-dimensional complete Riemannian manifold of Euler genus $g$, with none of the three sites belonging to any shortest curve between the other two sites. Then $V_i\cap V_j\cap V_k$ consists of at most $2g+2$ points.
\end{lemma}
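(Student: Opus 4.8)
The plan is to follow the proof of \cref{lem:two} as closely as possible, substituting the genus bound of \cref{lem:genus-bipartite} for the non-planarity of $K_{3,3}$. As there, I would begin by observing that we may assume $s_i$, $s_j$, and $s_k$ are the only three sites of the diagram, since introducing further sites can only shrink $V_i\cap V_j\cap V_k$. Writing $a$ for the number of points in this triple intersection, the goal is to show $a\le 2g+2$, so it suffices to rule out any collection of $2g+3$ distinct points lying in all three cells. The principal case, treated first, is the one in which all three sites are non-degenerate; configurations with degenerate sites will be dealt with separately and will contribute only a small constant number of points.

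For the principal case, fix any finite subcollection of $V_i\cap V_j\cap V_k$. Each such point $p$ lies in all three cells, so for each site $s_m\in\{s_i,s_j,s_k\}$ there is a shortest curve from $s_m$ to $p$, which I draw as the edge joining $s_m$ to $p$ in a copy of $K_{3,a}$ whose parts are the three sites and the chosen triple points. These vertices are genuinely distinct: a non-degenerate site is interior to its own cell (\cref{lem:Riemann-ndgi}), whereas a triple point, lying in two other cells, is interior to none (\cref{lem:Riemann-non-overlapping}). I then verify that this is a valid embedding on the genus-$g$ surface. Each edge is a simple arc, since a self-crossing shortest curve could be shortcut at its crossing. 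Two non-adjacent edges join distinct sites to distinct points and are disjoint by \cref{lem:Riemann-disjoint-segments}. Two adjacent edges share exactly one endpoint, and I claim they meet nowhere else: if two shortest curves issuing from a common point $x$ met again at a point $y\ne x$, then splicing the initial part of one to the terminal part of the other would yield a shortest curve turning a corner at $y$, which can be strictly shortened near $y$ --- a contradiction. (The remaining possibility, that two adjacent edges overlap along a common initial geodesic, is excluded because it would place one triple point in the relative interior of a Voronoi segment, hence interior to a cell against \cref{lem:Riemann-interior} and \cref{lem:Riemann-non-overlapping}, or else make one of the sites degenerate.) With a genuine drawing of $K_{3,a}$ on the surface in hand, \cref{lem:genus-bipartite} forces $a\le 2g+2$.

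It remains to dispose of the configurations containing degenerate sites, handled as in \cref{lem:two}. If exactly two of the three sites are non-degenerate, the degenerate cell is, by \cref{lem:Riemann-ray}, a connected subsegment of a ray lying inside one non-degenerate cell, all of whose relative-interior points are interior to that cell by \cref{lem:Riemann-interior}; by \cref{lem:Riemann-non-overlapping} only the single extreme point of this subsegment can also lie in the other non-degenerate cell, so at most one triple point arises. If only one site is non-degenerate, its cell is the whole manifold and the other two cells are ray segments issuing from their own sites; \cref{lem:Riemann-ray} shows that any point common to both lies on a shortest curve from the non-degenerate site through each degenerate site, so whenever these two shortest curves coincide the hypothesis that no site lies on a shortest curve between the other two is immediately violated, and the residual possibility of two distinct such curves is controlled by the same ray structure. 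In every case the count is at most $2g+2$.

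The step I expect to be the main obstacle is verifying that the shortest-curve drawing is a genuine embedding. In the strictly convex setting of \cref{lem:two} this was immediate, since two segments out of a common point meet only there and \cref{lem:cross} handled the rest; here the adjacent-edge case genuinely requires the shortcutting argument, and one must also be careful about the non-uniqueness of shortest curves and about rays that re-enter a cell after leaving it, neither of which occurs for convex distance functions. The secondary delicate point is the bookkeeping for the degenerate cases, where the ``diverging rays are disjoint'' reasoning of the convex proof no longer applies and must be replaced by the ray structure of \cref{lem:Riemann-ray} together with the hypothesis on shortest curves.
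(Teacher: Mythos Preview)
Your proposal is correct and follows essentially the same approach as the paper's proof: reduce to three sites, case on the number of non-degenerate sites, and in the principal case draw $K_{3,a}$ via shortest curves and invoke \cref{lem:genus-bipartite}. You are somewhat more careful than the paper about verifying the embedding (adjacent edges, vertex distinctness, self-crossings), while the paper is more explicit about the one-non-degenerate case, where it argues directly that two distinct Voronoi segments of $s_i$ can meet only at a shared endpoint via the same shortcutting argument you use for adjacent edges.
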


\begin{proof}
Without loss of generality we may consider that these are the only three sites of the Voronoi diagram, because adding more sites can only reduce the points in $V_i\cap V_j\cap V_k$ by causing some of those points to belong to Voronoi cells of other sites. By \cref{lem:Riemann-interior} and \cref{lem:Riemann-non-overlapping}, the whole space is covered by the cells of non-degenerate sites, with disjoint interiors. We distinguish cases by how many of the three given sites are non-degenerate:
\begin{itemize}
\item Suppose that there is only one non-degenerate site, without loss of generality $s_i$. Then, $V_i$ is the whole space, and $V_j$ and $V_k$ lie on Voronoi segments of $s_i$. These segments are distinct (else one of $s_j$ and $s_k$ would lie on a shortest curve from $s_i$ to the other). A point~$p$ interior to one Voronoi segment cannot belong to another Voronoi segment, because if it did the points past $p$ on the first segment would have an equally short curve to $s_i$ that follows the first segment to $p$ and then the second curve to $s_i$, but such a non-geodesic curve cannot be a shortest curve (it could be made shorter by adding a small shortcut near $p$). Therefore, in this case the only point of intersection can be a shared endpoint of the Voronoi segments of $s_i$ that contain $s_j$ and $s_k$, giving $|V_i\cap V_j\cap V_k|\le 1$.
\item In the next case, there are two non-degenerate sites, without loss of generality $s_i$ and $s_j$. Because it is degenerate, $s_k$ belongs to at least one of their Voronoi cells, without loss of generality $V_i$. By \cref{lem:ray}, $V_k$ is the intersection of a ray from $s_k$ with a Voronoi segment of $V_i$. By \cref{lem:Riemann-interior} and \cref{lem:Riemann-non-overlapping}, only the endpoint of this Voronoi segment can belong to $V_j$. Thus, in this case, $|V_i\cap V_j\cap V_k|\le 1$ again.
\item In the remaining case, all three sites are non-degenerate. In particular, none of them belong to $V_i\cap V_j\cap V_k$. Consider the system of Voronoi segments connecting each site to each point of $V_i\cap V_j\cap V_k$. These line segments cannot intersect except at shared endpoints, so they form a planar drawing of a complete bipartite graph with the three sites on one side of its bipartition and the points of $V_i\cap V_j\cap V_k$ on the other side.  By \cref{lem:genus-bipartite}, $|V_i\cap V_j\cap V_k|\le 2g+2$.\qedhere
\end{itemize}
\end{proof}
\fi

\subsection{Erd\H{o}s--Anning theorems for Riemannian manifolds}

\begin{theorem}
\label{thm:Riemann-triangle}
Let $s_1$, $s_2$, and $s_3$ be three points of a 2-dimensional complete Riemannian manifold of Euler genus $g$, with distance function $d$. Suppose also that there is no shortest curve containing all three of these points. Then the number of points that can have integer distances to all three of $s_1$, $s_2$, and $s_3$ is $O\bigl((g+1)\cdot d(s_1,s_2)\cdot d(s_1,s_3)\bigr)$.
\end{theorem}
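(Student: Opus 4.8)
The plan is to follow the proof of \cref{thm:triangle} essentially verbatim, with the sole structural change being the replacement of the two-point bound from \cref{lem:two} by the genus-dependent $(2g+2)$-point bound from \cref{lem:Riemann-triples}. I would begin by taking an arbitrary point $p$ with integer distances to all three sites and forming the additively weighted Voronoi diagram of $s_1$, $s_2$, $s_3$ using the weights $w_i = d(s_1,p) - d(s_i,p)$. At $p$ every weighted distance equals $d(s_1,p)$, so all three sites are equal nearest neighbors and $p$ lies in the triple intersection $V_1\cap V_2\cap V_3$.

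I would then count the distinct diagrams in which such a $p$ can appear. Complete Riemannian manifolds are geodesic metric spaces (by Hopf--Rinow), so the triangle inequality and the Lipschitz property of distances (\cref{obs:lipschitz}) both hold. Hence $w_1 = 0$, and $|w_2| \le d(s_1,s_2)$ and $|w_3| \le d(s_1,s_3)$. As each weight is an integer lying in a bounded range, there are $O\bigl(d(s_1,s_2)\cdot d(s_1,s_3)\bigr)$ admissible integer weight vectors, and therefore at most that many distinct diagrams in which $p$ could be a triple-intersection point.

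The final step is to invoke \cref{lem:Riemann-triples}, which caps $|V_1\cap V_2\cap V_3|$ at $2g+2$ per diagram, and then multiply. Before applying the lemma I must verify its hypothesis, that no site lies on a shortest curve between the other two; this follows directly from the theorem's hypothesis, since if $s_1$, say, lay on a shortest curve joining $s_2$ and $s_3$, that curve would contain all three sites and contradict the assumption that no shortest curve passes through all three. Multiplying $2g+2$ points per diagram by the $O\bigl(d(s_1,s_2)\cdot d(s_1,s_3)\bigr)$ diagrams gives the claimed bound $O\bigl((g+1)\cdot d(s_1,s_2)\cdot d(s_1,s_3)\bigr)$. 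Since every supporting lemma has already been transported to the Riemannian setting, I foresee no genuine obstacle; the one delicate point is exactly this reconciliation of the two phrasings of non-degeneracy, which is the single place where care is warranted.
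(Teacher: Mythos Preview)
Your proposal is correct and mirrors the paper's own argument exactly: the paper merely states that the proof is the same as that of \cref{thm:triangle}, with the factor of $g$ arising from \cref{lem:Riemann-triples} in place of \cref{lem:two}. Your added remark verifying that the hypothesis of \cref{lem:Riemann-triples} follows from the theorem's ``no shortest curve through all three'' assumption is a sensible clarification that the paper leaves implicit.
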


Except for the factor of $g$ coming from
\iffull
\cref{lem:Riemann-triples},
\else
our bound on triple points of Voronoi diagrams from the full version,
\fi
the proof is the same as for \cref{thm:triangle}.
For this theorem, the possibility that all triples of points lie on shortest curves (preventing the theorem from being applied) corresponds to the possibility of all points being collinear in the Euclidean plane or for convex distance functions. It is a strengthening of the more obvious generalization of collinearity, that all points of $S$ lie on a common geodesic:

\begin{lemma}
\label{lem:Riemann-geodesic}
Let $S$ be a set of three or more points of a complete Riemannian manifold such that each three points of $S$ belong to a shortest curve.
Then $S$ is a subset of a geodesic, and every shortest curve between pairs of points in $S$ lies along this geodesic.
\end{lemma}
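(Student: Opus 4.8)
The plan is to promote the triple-wise betweenness supplied by the hypothesis to a single ambient geodesic, using repeatedly the principle that a shortest curve cannot turn a corner (the equality case of \cref{obs:lipschitz}, already exploited in the proofs of \cref{lem:Riemann-interior} and \cref{lem:Riemann-triples}). First I would fix three distinct points of $S$; by hypothesis they lie on a common shortest curve, so metrically one of them, call it $m$, lies strictly between the other two, which I call $a$ and $b$. Let $\gamma$ be the maximal geodesic extending the minimizing arc from $a$ to $b$. Since this arc is a smooth geodesic through the interior point $m$, it passes straight through $m$, and $\gamma$ is well defined with $a$ and $b$ on opposite sides of $m$. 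The goal is then to show that every other point $s\in S$ already lies on $\gamma$.

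The crucial ingredient is a standard cut-locus fact: because $m$ is an interior point of a minimizing geodesic from $a$, it lies strictly before the cut point of $a$ in that direction, so the minimizing geodesic from $a$ to $m$ is \emph{unique} (and likewise the one from $b$ to $m$). This is exactly what lets me pin down a direction at $m$ despite the general non-uniqueness of shortest curves. Now, for an arbitrary $s\in S$ I would apply the hypothesis to the triple $\{a,m,s\}$, obtaining a shortest curve through all three and hence a strict betweenness among them. If $m$ is the middle point, the curve runs $a$–$m$–$s$ and is straight at $m$; its $a$–$m$ portion must be the unique minimizer, namely the arc of $\gamma$, so the curve leaves $m$ in the $\gamma$-direction and $s\in\gamma$. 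If $a$ is the middle point, the same reasoning applied at $a$ (using uniqueness of the $m$–$a$ minimizer) forces the curve to continue straight along $\gamma$ past $a$, so again $s\in\gamma$. If $s$ is the middle point, then $s$ lies on a minimizer from $m$ to $a$, which is unique and equals the arc of $\gamma$, so $s\in\gamma$. In every case $s\in\gamma$, which proves $S\subseteq\gamma$.

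For the second assertion, the same argument shows that the shortest curves witnessing the triple conditions all run along $\gamma$: each such curve passes through a point ($m$, $a$, or an interior point of the triple) that is strictly interior to a minimizing segment between points of $S$, where the minimizer is unique and hence coincides with the arc of $\gamma$. The step I expect to be the main obstacle is precisely this non-uniqueness of shortest curves at conjugate (antipodal-type) pairs: when two points of $S$ are mutually conjugate, there can be further minimizing geodesics leaving $\gamma$, and the cut-locus fact only controls those minimizers that actually participate in a triple lying strictly interior to a minimizing segment. I would therefore take care to state the second clause as governing exactly the shortest curves that realize the collinearity of the triples — which is all that the Erd\H{o}s--Anning application requires — and flag the conjugate-pair case explicitly. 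The remaining points (that three distinct points exist, that the metric middle of a triple on a shortest curve is well defined, and that the no-corner step is the equality case of \cref{obs:lipschitz}) are routine.
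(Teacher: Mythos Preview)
Your first-part argument is correct but takes a different route from the paper. The paper fixes an arbitrary basepoint $s_1$, looks at the full family of rays from $s_1$ containing shortest curves to the other points of $S$, and argues directly from the no-corner principle that any two such rays must be equal or opposite; no uniqueness or cut-locus input is used. You instead anchor the geodesic $\gamma$ at an \emph{interior} point $m$ of a minimizing segment and exploit the standard fact that minimizers to interior points are unique. This buys you a clean three-case analysis for each new $s$ (and makes the ``straight-through'' step at $m$ or $a$ automatic), at the cost of importing a Riemannian-geometry lemma the paper avoids. Both approaches are sound for the containment $S\subseteq\gamma$.

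Your caution about the second assertion is not just prudent but necessary: read literally, that clause is false. Take $S=\{p,q,r\}$ on the round sphere with $p,q$ antipodal and $r$ on one great circle $\gamma$ through them. The lone triple lies on the semicircle through $r$, so the hypothesis holds and $S\subseteq\gamma$; yet $p$ and $q$ are joined by a continuum of meridians not on $\gamma$. The paper's proof of the second clause reruns the ray argument at $s_2$ and asserts a corner must appear, but in this example the concatenated curve $p$--$r$--$q$ runs straight along $\gamma$ and no contradiction arises (and the paper's subsequent remark about ``two shortest curves'' between opposite equatorial points understates the situation). Your proposed repair---restrict the conclusion to the shortest curves that actually witness the triple collinearity, and flag the conjugate-pair case---is the right one, and as you note it is all that \cref{thm:Riemann-finite} and \cref{thm:Riemann-diameter} require.
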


\begin{proof}
Let $s_1$ be any of the points, and consider the system of rays containing shortest curves to all the other points. Suppose for a contradiction that two rays $R_2$ and~$R_3$ in this system of rays contain shortest curves from $s_1$ to points $s_2$ and $s_3$ in $S$,
and that these two rays are distinct and non-opposite. Then if $s_1$ were on a shortest path from $s_2$ to $s_3$, then a shortest curve from $s_2$ to $s_3$ would follow $R_2$ from $s_2$ to $s_1$ and then follow~$R_3$ from $s_1$ to $s_3$. This cannot happen, because a shortest curve that turns from one ray to another at $s_1$ could be shortcut by a different curve in a neighborhood of $s_1$. Alternatively, if $s_2$ were on a shortest curve from $s_1$ to $s_3$, then we could get a shortest curve from $s_1$ to $s_3$ that follows $R_2$ from $s_1$ to $s_2$, turns at $s_2$, and then continues along the $R_3$ from $s_2$ to $s_3$, again an impossibility. The case that $s_3$ is on a shortest curve from $s_1$ to $s_2$ is symmetric. So none of the three points can be the middle point of a shortest curve containing all three points. This contradiction to the assumption of the lemma implies that rays $R_2$ and $R_3$ cannot exist, and that the system of rays contains either a single ray or two opposite rays. In either case, these two rays, and therefore all points of $S$, lie on a single geodesic.

If some two points $s_2$ and $s_3$ had a shortest curve that did not lie along this geodesic, then we could apply this same argument to the rays at $s_2$ containing the shortest curve to $s_1$ (along the geodesic through all points of $S$) and the ray containing the shortest curve from $s_2$ to $s_3$ (not along this geodesic), obtaining the same contradiction. Therefore all shortest curves lie on the same geodesic.
\end{proof}

The conclusion of this lemma cannot be strengthened to state that each pair of points in $S$ has a unique shortest curve. For instance, consider an even number of points equally spaced on the equator of a sphere. In this example, each point has two shortest curves to its opposite point. A set of points whose shortest curves all lie on a single geodesic might still not have the property that each three points belong to a shortest curve: consider, for instance, three equally spaced points on the equator of a sphere.

Combining \cref{thm:Riemann-triangle} with \cref{lem:Riemann-geodesic} gives us the following generalization of the Erd\H{o}s--Anning theorem:

\begin{theorem}
\label{thm:Riemann-finite}
If a set $S$ of points of a 2-dimensional complete Riemannian manifold of bounded genus has the property that all distances between points of $S$ are integers, then either $S$ is finite or there is a single geodesic of the manifold containing all points of $S$ and connecting each pair of points by a shortest curve.
\end{theorem}

We also have the following diameter bound, weaker than in the Euclidean or convex distance function case:

\begin{theorem}
\label{thm:Riemann-diameter}
If a set $S$ of points of a 2-dimensional complete Riemannian manifold of bounded genus $g$ has integer distances and diameter $D$, then $|S|=O(gD^2)$.
\end{theorem}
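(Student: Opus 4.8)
The plan is to split into two cases according to whether $S$ contains three points with no common shortest curve, and to observe that most of the work has already been done by \cref{thm:Riemann-triangle} and \cref{lem:Riemann-geodesic}. The quadratic dependence on $D$ and the dependence on genus will come entirely from the first case; the second case is a Riemannian analogue of the ``collinear'' exception and will turn out to contribute only $O(D)$.

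First I would dispose of the case in which some three points $s_1,s_2,s_3\in S$ do not all lie on a single shortest curve. Since every distance within $S$ is an integer, every point of $S$ has integer distance to each of $s_1$, $s_2$, and $s_3$. The number of such points is therefore controlled by \cref{thm:Riemann-triangle}. Because $S$ has diameter $D$, we have $d(s_1,s_2)\le D$ and $d(s_1,s_3)\le D$, so the bound of \cref{thm:Riemann-triangle} becomes $O\bigl((g+1)\cdot D\cdot D\bigr)=O(gD^2)$, which dominates $|S|$. This is the step that produces both the quadratic dependence on $D$ and the dependence on the genus.

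It remains to handle the case in which every triple of points of $S$ lies on a common shortest curve. When $|S|\ge 3$, \cref{lem:Riemann-geodesic} then tells us that $S$ is contained in a single geodesic $\gamma$ and that every shortest curve between two points of $S$ runs along $\gamma$. Parametrize $\gamma$ by arc length with a chosen point $s_1\in S$ at $\gamma(0)$. For any other $s_i$, a shortest curve from $s_1$ to $s_i$ is a sub-arc of $\gamma$ of length $\ell_i=d(s_1,s_i)$ beginning at $\gamma(0)$, so $s_i\in\{\gamma(\ell_i),\gamma(-\ell_i)\}$. Since each $\ell_i$ is a positive integer at most $D$, every point of $S$ other than $s_1$ lies among the at most $2\lfloor D\rfloor$ points $\gamma(\pm 1),\dots,\gamma(\pm\lfloor D\rfloor)$, giving $|S|\le 2\lfloor D\rfloor+1=O(D)$.

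Combining the two cases yields $|S|=O(gD^2)$ in all cases. I expect the only delicate point to be the geodesic case: one must be careful that $\gamma$ may be a closed or self-intersecting geodesic, so that the map from arc length to points of the manifold need not be injective, and that the shortest curve from $s_1$ to $s_i$ may run in either direction along $\gamma$. However, non-injectivity can only identify candidate positions with one another, never create new ones, so the counting bound $O(D)$ is unaffected; and this case is in any event dominated by the quadratic bound from the first case.
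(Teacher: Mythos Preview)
Your proposal is correct and follows essentially the same two-case structure as the paper: the non-collinear case is handled identically via \cref{thm:Riemann-triangle}, and the collinear case is reduced to $O(D)$ points along a single geodesic. The only cosmetic difference is that in the collinear case you parametrize the geodesic from \cref{lem:Riemann-geodesic} by arc length and count the at most $2\lfloor D\rfloor$ integer positions, whereas the paper instead selects a diametral shortest curve $C$ and argues that any point off $C$ forces a second shortest curve of length $D$ between its endpoints, yielding a closed geodesic of length $2D$; both routes give the same $O(D)$ bound.
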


\begin{proof}
If $S$ has three points not all on a single shortest curve, this follows from \cref{thm:Riemann-triangle}. Otherwise, the points all have shortest curves along a geodesic. Let $C$ be a shortest curve of length $D$ between some two points of $S$. If $C$ covers all points of $S$, there are at most $D+1$ points. Otherwise, let $s_1$ and $s_2$ be the two endpoints of $C$, and let $s_3$ be any point not covered by $C$. For $s_1$, $s_2$, and $s_3$ to be part of a shortest curve, the curve must have $s_1$ and $s_2$ as endpoints, for otherwise we would have a shortest curve that includes a segment from $s_1$ to $s_2$, causing it to be longer than $D$. In this case, the only possibility is that $s_1$ and~$s_2$ have two shortest paths, and are at distance $D$ from each other along a geodesic of length $2D$, so there are at most $2D$ points in $S$.
\end{proof}

\begin{corollary}
\label{cor:equilateral-dimension}
The equilateral dimension of a 2-dimensional complete Riemannian manifold of bounded genus $g$ is $O(g)$.
\end{corollary}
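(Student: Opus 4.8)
The plan is to derive this directly from the diameter bound of \cref{thm:Riemann-diameter}. An equilateral set is, by definition, a set all of whose pairwise distances equal one; in particular all of these distances are integers, and if the set has at least two points then its diameter is exactly $D=1$. So I would simply take such a set $S$, observe that it satisfies the hypotheses of \cref{thm:Riemann-diameter}, and substitute $D=1$ into the bound $|S|=O(gD^2)$ to conclude $|S|=O(g)$.

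Before invoking that theorem, I would pause to check the one configuration that could cause trouble, namely the degenerate alternative in which all triples of points lie on a single shortest curve. For an equilateral set with three or more points this cannot occur: if three points $s_1,s_2,s_3$ at pairwise distance one all lay on one shortest curve, then one of them would be an interior point of that curve, and by the additivity of length along a shortest curve the two outer points would be at distance $1+1=2$ rather than $1$, a contradiction. Hence any three points of an equilateral set are non-collinear in the sense required by \cref{thm:Riemann-triangle}. This also gives a second, equally short route to the result: fixing any three points of $S$, every point of $S$ lies at integer distance one from all three, so \cref{thm:Riemann-triangle} bounds their number by $O\bigl((g+1)\cdot d(s_1,s_2)\cdot d(s_1,s_3)\bigr)=O\bigl((g+1)\cdot 1\cdot 1\bigr)=O(g)$.

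There is no real obstacle here; the corollary is an immediate specialization of the diameter bound to the case $D=1$. The only substantive thing to record is the observation above that an equilateral set never falls into the case where all triples share a shortest curve, which is exactly the case that would block the triangle bound. I would therefore present the proof in its shortest form, applying \cref{thm:Riemann-diameter} with $D=1$, since that theorem already handles both cases of its own proof and subsumes this verification.
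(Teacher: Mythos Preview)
Your proposal is correct and matches the paper's approach: the corollary is stated in the paper with no proof, immediately following \cref{thm:Riemann-diameter}, so the intended argument is precisely the specialization $D=1$ that you give. Your additional remark ruling out the shortest-curve case for equilateral triples is a nice sanity check but is already absorbed into the case analysis inside the proof of \cref{thm:Riemann-diameter}.
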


\section{Convex surfaces}
\label{sec:convex}

In this section we extend our results on Riemann manifolds to non-smooth surfaces of non-negative curvature. The restriction to non-negative curvature is motivated by \cref{ex:infinite-cone}, which shows that even a single cone point of negative curvature can be an obstacle to Erd\H{o}s--Anning theorems. For concreteness, in this section we usually define a \emph{convex surface} to mean the boundary of a convex subset of $\mathbb{R}^3$ with nonempty interior. However, we also allow a special case, which can be thought of as the limit of convex cylinders as the cylinder height goes to zero: the double covering of a convex subset $K$ of $\mathbb{R}^2$, obtained by gluing two copies of $K$ at corresponding boundary points. For instance, the Euclidean plane is a convex surface in the first of these two senses: it is the boundary of a halfspace. By results of Aleksei Pogorelov, a more abstract class of non-smooth convex manifolds, topologically equivalent to a sphere and with total Gaussian curvature~$4\pi$, can be characterized as being either boundaries of convex bounded subsets of $\mathbb{R}^3$ or double covers of convex bounded subsets of $\mathbb{R}^2$~\cite{Con-SR-06}. We include double covers in our definition of convex surfaces to be consistent with this result of Pogorelov, but we do not require the convex sets from which they are defined to be bounded. We measure the distance on a convex surface as the infimum of lengths of curves on the surface. By a compactness argument, each pair of points on the surface can be connected by a curve of length equal to this distance, so this is a geodesic metric space~\cite{Mye-TAMS-45}, and we can apply arguments involving the Lipschitz property of distances. Every convex surface, in the sense described here, is topologically a disk (like the Euclidean plane), a sphere (like a geometric sphere, polyhedron, or double-covered topological disk), or an annulus (like the boundary of an infinite cylinder).

\begin{figure}[t]
\centering\includegraphics[width=0.35\textwidth]{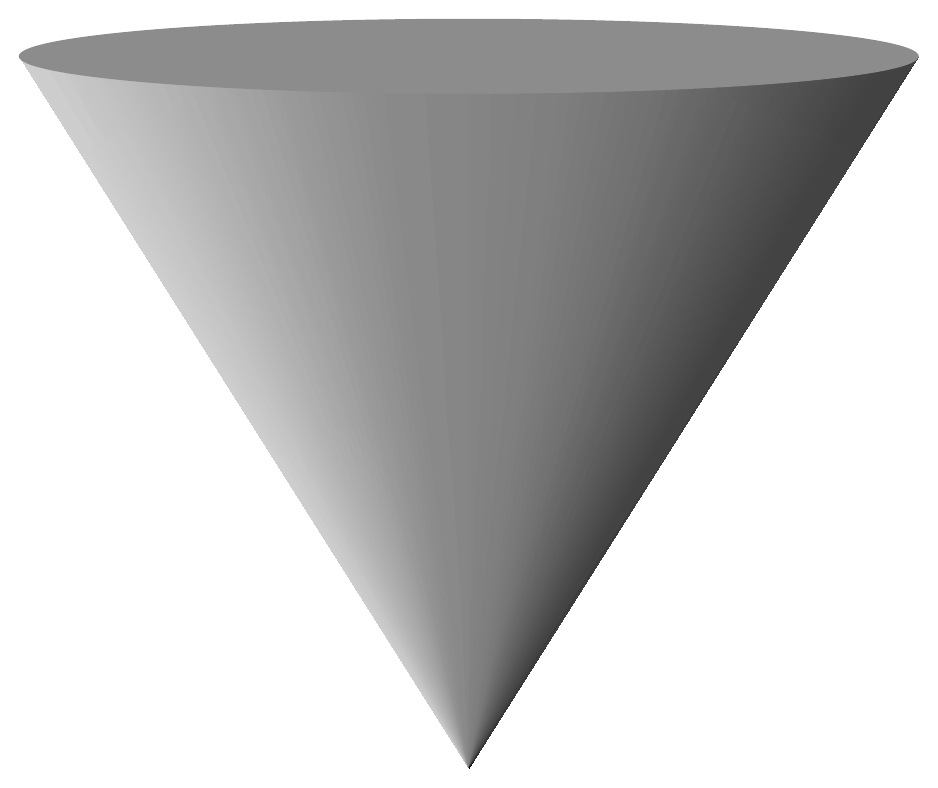}
\caption{A solid cone}
\label{fig:cone}
\end{figure}

The boundary of a (bounded) solid cone (\cref{fig:cone}) exhibits several of the common geometric features of a convex surface. The base of the cone is a flat disk; the sides of the cone have a flat surface metric (locally isometric to a subset of the Euclidean plane) even though they are curved in three-dimensional space. The circle where the base meets the sides is non-smooth; each point has a neighborhood that approaches a Euclidean metric (equivalent to that of a folded piece of paper) in the limit. The apex of the cone, however, is not Euclidean, even in the limit: it has a positive angular deficiency that can be measured by comparing the radius and circumference of small metric disks around it. The total curvature of this surface is $4\pi$, by the Gauss--Bonnet theorem, but in this case this curvature is concentrated at the cone point (where it equals the angular deficit of this point) and along the circle where the base and sides meet.

In general, when a shortest curve on a convex surface passes through a locally Euclidean point, it must pass straight through it, in the local Euclidean geometry of a neighborhood of a point. A curve that does not do this could be shortcut to bypass the point. It is not possible for a shortest curve to pass through a cone point (any point of positive angular deficiency): any curve through such a point can be shortcut. Thus, we need only consider such points as endpoints of shortest curves, not as their interior points.

\begin{lemma}
For every point $p$ on a convex surface, the disks of radius $r$ centered on that point, in the limit as $r\to 0$, approach a cone for which $p$ either has angular deficit zero ($p$ is locally Euclidean) or a well-defined positive angular deficit ($p$ is locally a cone point).
\end{lemma}

\begin{proof}
For a point $p$ on a convex surface, let $C_r(p)$ denote the cone formed by positive combinations of $p$ with the points at distance $r$ from $p$ on the surface. Then $C_r$ is monotone in~$r$: for $r'<r$, $C_r'(p)\supset C_r(p)$, by convexity. By compactness of the space of cones at a point~$p$, in the limit as $r$ goes to zero, these cones approach a limiting cone; call this limit $C_0(p)$. Let $B$ be the curve obtained by intersecting the boundary of $C_0(p)$ with a unit sphere centered at $p$. Then $B$ is convex, and lies in a half-sphere, again by convexity of the convex surface. Thus, its length is at most $2\pi$. If it is exactly $2\pi$, then $p$ is locally Euclidean; otherwise, it is locally equivalent to a cone point with deficit equal to the difference between~$2\pi$ and the length of $B$.
\end{proof}

\iffull
\else
We defer the detailed analysis of Voronoi diagrams for these metrics to the full version.
\fi

\iffull
\subsection{Voronoi diagrams and their properties}
\label{sec:convex-vor}
For convex surfaces, we define star-shaped sets, additively weighted Voronoi diagrams, rays, Voronoi segments, and degenerate and non-degenerate sites in the same way as for Riemannian manifolds. One small difference is that a ray can terminate, by hitting a cone point. Because every point is either locally a cone point (with no geodesics through it) or locally Euclidean (with all geodesics passing straight through it) the behavior of geodesics on convex surfaces is much the same as for Riemannian manifolds: the cone points do not affect them at at all, and the non-smooth geometry of the surface at the remaining points does not affect our use of this geometry to prove properties of these spaces. We have the following results, analogous to those for Riemannian manifolds, with the same proofs:

\begin{lemma}
For additively weighted Voronoi diagrams of convex surfaces, every non-empty Voronoi cell is star-shaped with its site in its kernel.
\end{lemma}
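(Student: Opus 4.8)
The plan is to reuse, essentially verbatim, the proof of \cref{lem:star} (equivalently its Riemannian counterpart \cref{lem:Riemann-star}), since the statement to be proved is exactly the analogue of those lemmas for convex surfaces. The only hypotheses those proofs used were that the ambient space is a geodesic metric space and that it satisfies the Lipschitz property of distances (\cref{obs:lipschitz}). Both of these are available here: the surrounding text establishes that a convex surface, with distance measured as the infimum of curve lengths, is a geodesic metric space, so \cref{obs:lipschitz} applies to it.

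Concretely, I would argue as follows. Let $V_i$ be a non-empty Voronoi cell, with site $s_i$, in an additively weighted Voronoi diagram of a convex surface, and let $p$ be any point of $V_i$. Fix a shortest curve $\gamma$ from $p$ to $s_i$; such a curve exists because the space is geodesic. By the definition of a Voronoi cell, $d(p,s_i)+w_i \le d(p,s_j)+w_j$ for every other site $s_j$. For an arbitrary point $q$ on $\gamma$, the portion of $\gamma$ from $p$ to $q$ has length $d(p,q)$, so by the Lipschitz property (\cref{obs:lipschitz}) we have $d(q,s_i) = d(p,s_i) - d(p,q)$, whereas $d(q,s_j) \ge d(p,s_j) - d(p,q)$ for every other $s_j$. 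Adding the weights and combining these with the Voronoi inequality at $p$ gives $d(q,s_i)+w_i \le d(q,s_j)+w_j$ for all $j$, so $q \in V_i$. Hence every shortest curve from any point of $V_i$ to $s_i$ lies in $V_i$, which by the definitions of kernel and star-shapedness (carried over from the Riemannian setting) shows that $s_i$ lies in the kernel of $V_i$ and therefore that $V_i$ is star-shaped.

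There is no serious obstacle, since the argument is purely metric and makes no use of smoothness or of the local structure of points. The one point worth a sentence of care is that the kernel in this setting is defined (as for Riemannian manifolds) in terms of \emph{shortest curves} rather than straight segments, so the statement ``$s_i$ is in the kernel'' means precisely that every shortest curve joining $s_i$ to a point of $V_i$ stays inside $V_i$ — and that is exactly what the displayed computation verifies. The possible presence of cone points does not interfere: a shortest curve from $p$ to $s_i$ passes only straight through locally Euclidean points and can have a cone point only as an endpoint, but the computation above never inspects the interior geometry of $\gamma$, only the additivity of length along it, which holds on any geodesic metric space. Thus the proof is identical to that of \cref{lem:star} and \cref{lem:Riemann-star}.
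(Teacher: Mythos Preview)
Your proposal is correct and follows exactly the paper's approach: the paper's proof simply reads ``Same as for \cref{lem:star} and \cref{lem:Riemann-star},'' and you have spelled out that argument in detail, correctly noting that it uses only the geodesic property and the Lipschitz observation (\cref{obs:lipschitz}), both of which hold on convex surfaces.
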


\begin{proof}
Same as for \cref{lem:star} and \cref{lem:Riemann-star}.
\end{proof}

\begin{lemma}
Let $s_i$ be a degenerate site of an additively weighted Voronoi diagram of a convex surface, with $s_i$ belonging to the Voronoi cell $V_j$ of another site $s_j$. Let $R_j$ be a ray with apex $s_j$ containing a shortest curve from $s_i$ to $s_j$, and let $R_i$ be a ray with apex $s_i$ continuing in the same direction, away from $s_i$. Then either $s_i$ is empty, or $V_i$ is the intersection of $R_i$ with the Voronoi segment of $R_j$.
\end{lemma}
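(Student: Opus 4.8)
The plan is to reproduce the argument of \cref{lem:Riemann-ray} almost verbatim, since the only genuinely new feature of a convex surface, as opposed to a complete Riemannian manifold, is that the ray $R_i$ may terminate upon reaching a cone point; I will check that this possibility is already absorbed into the definition of the Voronoi segment and so does not disrupt the reasoning. As in the Riemannian case, I would begin by comparing the weighted self-distance of $s_i$, namely $w_i$, with its weighted distance $d(s_i,s_j)+w_j$ to the site $s_j$ whose cell contains it, and split into two cases accordingly.

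First I would dispose of the empty subcase. If $d(s_i,s_j)+w_j < w_i$, then the weighted distance from $s_i$ to $s_j$ is strictly smaller than the weighted distance from $s_i$ to itself, so $s_i$ does not lie in $V_i$. Were $V_i$ nonempty, the convex-surface analogue of \cref{lem:Riemann-star} would make it star-shaped with $s_i$ in its kernel, forcing $s_i\in V_i$, a contradiction; hence $V_i$ is empty. In the remaining case, because $s_i$ is degenerate and lies in $V_j$, the two weighted distances must actually be equal at $s_i$, so $d(s_i,s_j)+w_j = w_i$. Here I would invoke the Lipschitz property of distances (\cref{obs:lipschitz}): travelling outward from $s_i$ along $R_i$, which continues past $s_i$ the shortest curve from $s_j$ carried by $R_j$, the unweighted distance to $s_i$ grows at unit rate, and because $R_i$ and $R_j$ jointly trace a single shortest curve from $s_j$, the unweighted distance to $s_j$ grows at the same unit rate. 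Thus the weighted distances to $s_i$ and $s_j$ remain equal at every point of $R_i$.

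I would then read off the cell from these equal values together with the definition of the Voronoi segment. At any point lying within the Voronoi segment of $R_j$, this common weighted value is by definition the global minimum of the weighted distances over all sites, so such a point belongs to $V_i\cap V_j$; past the endpoint of the Voronoi segment the weighted distance to $s_j$ is no longer minimal, hence neither is the (equal) weighted distance to $s_i$, so such points lie outside $V_i$. Finally, for a point $p$ off the ray $R_i$, the equality $w_i = d(s_i,s_j)+w_j$ turns membership of $p$ in $V_i$ into the condition $d(p,s_j)=d(p,s_i)+d(s_i,s_j)$, i.e.\ that $s_i$ lies on a shortest curve from $p$ to $s_j$; by strictness of the triangle inequality away from such curves, points off $R_i$ are strictly closer to $s_j$ and never enter $V_i$. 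Combining these observations yields $V_i = R_i \cap (\text{Voronoi segment of } R_j)$.

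The one point requiring care, and the main obstacle relative to the Riemannian proof, is exactly the convex-surface behavior of rays: a cone point met along $R_j$ or $R_i$ could in principle truncate the ray. I would argue this causes no trouble because shortest curves cannot pass through cone points, as any curve through such a point can be shortcut; consequently a cone point encountered along $R_j$ already sits at or beyond the endpoint of its Voronoi segment, and the identity to be proved is phrased in terms of that segment rather than of the full ray. The termination of rays at cone points is therefore handled silently, and the proof goes through exactly as for \cref{lem:Riemann-ray}.
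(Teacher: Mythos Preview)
Your proposal is correct and follows the same approach as the paper: the paper's entire proof is the single line ``Same as for \cref{lem:Riemann-ray},'' and you have reproduced that argument together with the extra observation (which the paper makes only in the surrounding text) that a ray on a convex surface may terminate at a cone point, arguing that this is harmless because shortest curves avoid cone points and the conclusion is phrased in terms of the Voronoi segment. Your write-up is in fact more detailed than the paper's, in particular in spelling out why points off $R_i$ cannot lie in $V_i$.
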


\begin{proof}
Same as for \cref{lem:Riemann-ray}.
\end{proof}

\begin{corollary}
In an additively weighted Voronoi diagram on a convex surface, every point belongs to the Voronoi cell of at least one non-degenerate site
\end{corollary}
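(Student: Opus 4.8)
The plan is to follow the same strategy used for \cref{cor:non-degenerate-cover} and its Riemannian analogue, which carries over to convex surfaces once the immediately preceding lemma (the convex-surface analogue of \cref{lem:Riemann-ray}) is in hand. The single fact I need is that the cell of a degenerate site is always contained in the cell of the site witnessing its degeneracy, and then I iterate this containment until I reach a non-degenerate site.

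First I would establish the containment. Suppose $s_i$ is degenerate, so that $s_i \in V_j$ for some other site $s_j$. By the preceding lemma, either $V_i$ is empty or $V_i$ equals the intersection of a ray $R_i$ with the Voronoi segment of $R_j$. Since the Voronoi segment of $R_j$ is by definition a union of shortest curves from points of $V_j$ to $s_j$, and each such curve lies in $V_j$ because $s_j$ belongs to the kernel of $V_j$, the Voronoi segment is a subset of $V_j$; hence in both cases $V_i \subseteq V_j$. Thus every point lying in the cell of a degenerate site also lies in the cell of some other site, and the convex-surface-specific possibility that a ray terminates at a cone point does not affect this, as the lemma already incorporates it.

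Next I would iterate, staying entirely within the original diagram. Take any point $p$ and a site $s_{i_0}$ with $p \in V_{i_0}$. If $s_{i_0}$ is non-degenerate we are done; otherwise the containment above produces a site $s_{i_1}$ with $p \in V_{i_0} \subseteq V_{i_1}$, and I repeat the argument with $s_{i_1}$. To see that this chain cannot run forever, I would track the self-weights: the membership $s_i \in V_j$ gives $d(s_i, s_j) + w_j \le d(s_i, s_i) + w_i = w_i$, and since distinct sites are at positive distance in a geodesic metric, $d(s_i, s_j) > 0$ forces $w_j < w_i$. Hence the weights $w_{i_0} > w_{i_1} > \cdots$ strictly decrease, so with only finitely many sites the chain must terminate at a non-degenerate site whose cell contains $p$.

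The only point requiring genuine care is this termination argument: a priori the chain of degeneracy witnesses might cycle, and ruling this out is exactly where the strict monotonicity of the self-weights—and thus the positivity of distances between distinct sites—is used. Everything else is a direct transcription of the Euclidean and Riemannian proofs, and it remains valid on convex surfaces precisely because the preceding lemma, the only ingredient, has already been established in that setting.
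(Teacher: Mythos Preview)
Your proof is correct and follows the same iteration strategy the paper uses (``repeatedly replace the Voronoi cells of degenerate sites with supersets that are also Voronoi cells''). Your explicit termination argument via strictly decreasing weights $w_{i_0}>w_{i_1}>\cdots$ is a welcome detail that the paper leaves implicit.
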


\begin{lemma}
A site $s_i$ of an additively weighted Voronoi diagram on a convex surface is non-degenerate if and only if it is an interior point of its cell $V_i$.
\end{lemma}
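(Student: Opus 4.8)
The plan is to prove the biconditional in two directions, reusing the structure of the proofs of \cref{lem:non-degenerate-interior} and \cref{lem:Riemann-ndgi} and adapting it to the convex-surface setting.

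First I would establish that an interior point of $V_i$ must be a non-degenerate site. Suppose $s_i$ is interior to $V_i$ but, for contradiction, is degenerate. Then by the convex-surface analogue of \cref{lem:Riemann-ray}, the cell $V_i$ is the intersection of a ray $R_i$ with the Voronoi segment of another site's ray. This set is contained in a single ray, which is a one-dimensional subset of the two-dimensional surface and therefore has empty interior. This contradicts $s_i$ being interior to $V_i$, so $s_i$ is non-degenerate.

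For the converse I would argue the contrapositive. If $s_i$ is not interior to $V_i$, then every neighborhood of $s_i$ contains a point outside $V_i$; each such point lies in some other cell, so at least one other cell meets every neighborhood of $s_i$. Using the finiteness of the site set to exchange the order of quantifiers, I obtain a single cell $V_j$ with $j\ne i$ that intersects every neighborhood of $s_i$. Since Voronoi cells are closed (their defining weighted-distance inequalities are non-strict and the distance function is continuous), the point $s_i$ lies in $V_j$, so $s_i$ is degenerate.

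The only point requiring care---and it is mild---is confirming that the ray lemma still produces a set with empty interior on a non-smooth convex surface, where a ray may be truncated upon reaching a cone point. Even after such truncation the degenerate cell remains a subset of a single ray, hence one-dimensional within a two-dimensional surface, so its interior is still empty and the argument carries through verbatim.
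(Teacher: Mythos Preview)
Your proof is correct and follows essentially the same approach as the paper, which simply states that the proof is the same as for \cref{lem:non-degenerate-interior} and \cref{lem:Riemann-ndgi}. Your additional remark about rays being truncated at cone points is a reasonable bit of extra care, but it does not change the argument.
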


\begin{proof}
Same as for \cref{lem:non-degenerate-interior} and \cref{lem:Riemann-ndgi}.
\end{proof}

\begin{lemma}
Let $s_i$ be a non-degenerate site of an additively weighted Voronoi diagram on a convex surface, let $L$ be a Voronoi segment in $V_i$ having $s_i$ as an endpoint, and let $p$ be a point in the relative interior of $L$. Then $p$ is interior to $V_i$.
\end{lemma}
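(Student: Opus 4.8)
The plan is to mirror the proof of \cref{lem:Riemann-interior} as closely as possible, since away from its cone points a convex surface behaves exactly like a Riemannian manifold as far as shortest curves are concerned. First I would suppose for contradiction that $p$ is not interior to $V_i$. Then, by finiteness of the number of sites and the fact that Voronoi cells are closed, there is some other site $s_j$ with $p\in V_j$ such that every neighborhood of $p$ meets $V_j\setminus V_i$; in particular $p$ has equal weighted distance to $s_i$ and to $s_j$.

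The one genuinely new point, relative to the Riemannian case, is that a convex surface may contain cone points, at which the shortcutting step of the Riemannian argument could a priori fail. The key observation that disposes of this concern is that $p$ cannot be a cone point. Because $p$ lies in the relative interior of the Voronoi segment $L$, there are points of $L$ strictly on the far side of $p$ from $s_i$, and the shortest curve from $s_i$ to any such point passes through $p$. Since no shortest curve on a convex surface can pass through a cone point, $p$ must be locally Euclidean, and some neighborhood of $p$ is isometric to a Euclidean disk.

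With $p$ locally Euclidean, the remainder of the argument is the Riemannian proof verbatim. The shortest curves from $p$ to $s_i$ and to $s_j$ cannot leave $p$ along the same ray, for otherwise one of $s_i$, $s_j$ would lie on a shortest curve to the other and hence be degenerate --- contradicting either the non-degeneracy of $s_i$ or the assumption that $V_j\setminus V_i$ is nonempty (via \cref{lem:Riemann-ray}, which would force $V_j\subseteq V_i$). Take any point $q$ on $L$ strictly past $p$. The concatenation of $L$ from $q$ to $p$ with a shortest curve from $p$ to $s_j$ is a curve from $q$ to $s_j$ whose length equals the distance from $q$ to $s_i$ measured along $L$; but it turns at the Euclidean point $p$, so it can be strictly shortened by a local shortcut. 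Hence $q$ is strictly closer in weighted distance to $s_j$ than to $s_i$, contradicting the definition of $L$ as a Voronoi segment of $V_i$.

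The main obstacle is precisely this cone-point issue, since it is the only place where the convex-surface setting departs from \cref{lem:Riemann-interior}. I expect the crux of writing the proof cleanly to be the observation that interior points of a Voronoi segment automatically avoid cone points; once that is established, none of the metric estimates in the Riemannian argument require any modification.
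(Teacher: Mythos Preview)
Your proposal is correct and follows essentially the same approach as the paper. The paper's proof simply says ``Same as for \cref{lem:Riemann-interior}\ldots\ Here we are using the fact that no interior point of a shortest curve can be a cone point,'' and you have correctly identified and spelled out exactly this observation: since $p$ lies in the relative interior of the Voronoi segment $L$, a shortest curve passes through $p$, so $p$ is locally Euclidean and the shortcutting argument from the Riemannian case goes through unchanged.
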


\begin{proof}
Same as for \cref{lem:Riemann-interior}, which depends only on the local Euclidean geometry of the surface at an interior point of a shortest curve to argue that no curve that bends at such a point can be shortest. Here we are using the fact that no interior point of a shortest curve can be a cone point.
\end{proof}

\begin{lemma}
Let $s_i$ and $s_j$ be non-degenerate sites of an additively weighted Voronoi diagram on a convex surface. Then the interior of $V_i$ is disjoint from $V_j$, and vice versa.
\end{lemma}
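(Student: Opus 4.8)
The plan is to follow the proof of \cref{lem:Riemann-non-overlapping} essentially verbatim, since that argument used only the Lipschitz property of distances (\cref{obs:lipschitz}) together with the fact that the space is geodesic, and both of these hold for convex surfaces. First I would argue by contradiction: suppose some point $p$ lies in the interior of $V_i$ and also belongs to $V_j$. Because $p\in V_i\cap V_j$, the weighted distances from $p$ to $s_i$ and to $s_j$ are equal, and I would fix shortest curves $ps_i$ and $ps_j$ realizing these distances (such curves exist by the compactness argument for convex surfaces noted at the start of this section). These two curves must meet only at their shared endpoint $p$: if one were contained in the other, then one site would lie on a shortest curve to the other, hence in that site's Voronoi cell, contradicting the assumption that both sites are non-degenerate.

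Next, using that $p$ is interior to $V_i$, I would select a point $q\ne p$ on the shortest curve $ps_j$, close enough to $p$ that $q$ still lies in $V_i$; such a $q$ exists because a whole neighborhood of $p$ is contained in $V_i$. Along the sub-curve from $p$ to $q$, which is part of a shortest curve to $s_j$, the weighted distance to $s_j$ decreases at the full unit rate (the equality case of \cref{obs:lipschitz}), whereas the weighted distance to $s_i$ can decrease at most as fast, and in fact strictly more slowly, since equality on that side would force $pq$ to lie along a shortest curve to $s_i$ as well, which the disjointness of the two curves excludes. Hence $q$ is strictly closer in weighted distance to $s_j$ than to $s_i$, so $q\notin V_i$, contradicting the choice of $q$. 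By symmetry, the same argument rules out a point interior to $V_j$ that belongs to $V_i$.

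The one point requiring attention beyond the smooth Riemannian case is that the equality behavior of \cref{obs:lipschitz} along $ps_j$ is unaffected by the non-smoothness of a convex surface, and this is where I expect the only (minor) obstacle to lie. I would note that the interior of the shortest curve $ps_j$ contains no cone points: any curve through a point of positive angular deficit can be shortcut, so cone points occur only as endpoints of shortest curves, not as interior points. At every locally Euclidean interior point the distance function behaves exactly as in the smooth setting, so the rate computation underlying \cref{obs:lipschitz} carries over unchanged. With this observation in place, the proof of \cref{lem:Riemann-non-overlapping} applies without modification.
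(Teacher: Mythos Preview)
Your proposal is correct and takes essentially the same approach as the paper: the paper's proof is simply ``Same as for \cref{lem:Riemann-non-overlapping},'' and you reproduce that argument faithfully, adding only the observation about cone points (which the paper makes in the section preamble rather than in this proof).
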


\begin{proof}
Same as for \cref{lem:Riemann-non-overlapping}.
\end{proof}

\begin{lemma}
Let $s_i$ and $s_j$ be distinct non-degenerate sites of an additively weighted Voronoi diagram on a convex surface, and let $p$ and $q$ be distinct points of $V_i$ and $V_j$ respectively. Then line segments (or arcs)  $ps_i$ and $qs_j$ are disjoint.
\end{lemma}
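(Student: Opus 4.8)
The plan is to reproduce, essentially verbatim, the argument used for the two earlier analogues of this statement, \cref{lem:cross} for strictly convex distance functions and \cref{lem:Riemann-disjoint-segments} for Riemannian manifolds, since the convex-surface machinery they rely on has already been put in place in this section. I would argue by contradiction: suppose the shortest curves (Voronoi segments) $ps_i$ and $qs_j$ meet at some point $x$. The first thing to check is that $x$ cannot be a shared endpoint of both curves, because the four endpoints $p$, $s_i$, $q$, $s_j$ are pairwise distinct. Indeed $p\neq q$ and $s_i\neq s_j$ are given, while $p=s_j$ would place the non-degenerate site $s_j$ inside $V_i$ (and symmetrically $q=s_i$ would place $s_i$ inside $V_j$), each contradicting non-degeneracy. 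Hence $x$ lies in the relative interior of at least one of the two curves.

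Without loss of generality, say $x$ is in the relative interior of $ps_i$, which is a subset of a Voronoi segment of $s_i$ (a shortest curve from the point $p\in V_i$ to the site $s_i$). By the preceding interior lemma for convex surfaces, the exact analogue of \cref{lem:Riemann-interior}, every relative-interior point of such a segment is interior to $V_i$, so $x$ is interior to $V_i$. On the other hand, $x$ lies on $qs_j$, which is contained in $V_j$: since $q\in V_j$ and $s_j$ lies in the kernel of $V_j$ by the star-shaped property of convex-surface cells, the whole shortest curve $qs_j$ belongs to $V_j$. Thus $x$ is simultaneously interior to $V_i$ and a member of $V_j$, contradicting the preceding non-overlapping lemma, the analogue of \cref{lem:Riemann-non-overlapping}. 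This contradiction establishes that the two curves are disjoint.

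I do not expect any genuine obstacle here: the substantive work has already been done in establishing the convex-surface versions of the interior and non-overlapping lemmas, whose proofs in turn rely only on the local Euclidean geometry at interior points of a shortest curve and on the fact that a cone point never occurs in the interior of a shortest curve. The only point requiring a little care is the endpoint bookkeeping in the first paragraph, namely confirming that an intersection forces a relative-interior incidence rather than a coincidence of endpoints, together with the observation that a shortest curve from a cell point to its site stays inside the (star-shaped) cell. Both are immediate from the lemmas already in hand, so the proof is, as the section's pattern suggests, essentially the same as for \cref{lem:cross} and \cref{lem:Riemann-disjoint-segments}.
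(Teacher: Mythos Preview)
Your proposal is correct and follows exactly the paper's approach: the paper's proof reads simply ``Same as for \cref{lem:Riemann-disjoint-segments},'' which is precisely the contradiction via the interior lemma and the non-overlapping lemma that you spell out. One tiny bookkeeping point: your claim that the four endpoints are \emph{pairwise} distinct overlooks the harmless possibilities $p=s_i$ or $q=s_j$ (where a segment degenerates to a point), but those cases are disposed of by the same non-degeneracy argument, so the conclusion that any intersection lies in the relative interior of at least one segment stands.
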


\begin{proof}
Same as for \cref{lem:Riemann-disjoint-segments}.
\end{proof}

\begin{lemma}
Let $s_i$, $s_j$, and $s_k$ be three sites of an additively weighted Voronoi diagram on a convex surface, with none of the three sites belonging to any shortest curve between the other two sites. Then $V_i\cap V_j\cap V_k$ consists of at most two points.
\end{lemma}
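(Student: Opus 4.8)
The plan is to follow the case analysis of \cref{lem:Riemann-triples} essentially verbatim, now specialized to the Euler genus zero of a convex surface. First I would note, exactly as there, that we may assume $s_i$, $s_j$, and $s_k$ are the only three sites, since adding further sites can only shrink $V_i\cap V_j\cap V_k$, and that by the convex-surface versions of the interior and non-overlapping lemmas proved just above, the surface is covered by the cells of the non-degenerate sites with disjoint interiors. I would then split into cases according to how many of the three sites are non-degenerate.

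The one- and two-non-degenerate-site cases transfer without change, since they rely only on the convex-surface ray, interior, and non-overlapping lemmas, together with the fact that a shortest curve cannot turn at an interior point: such a point is locally Euclidean rather than a cone point, so any turn could be shortcut near it. In the one-non-degenerate case, the hypothesis that no site lies on a shortest curve between the other two forces the Voronoi segments carrying $s_j$ and $s_k$ to be distinct, so they can share only an apex endpoint, giving at most one intersection point. In the two-non-degenerate case, $V_k$ is a ray intersected with a Voronoi segment of the cell containing $s_k$, and only that segment's endpoint can lie in the third cell, again giving at most one.

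The heart of the matter is the case where all three sites are non-degenerate, so none lies in $V_i\cap V_j\cap V_k$. Here I would join each site to each point of $V_i\cap V_j\cap V_k$ by a Voronoi segment; by the convex-surface disjoint-segments lemma these arcs meet only at shared endpoints, so together they form a crossing-free drawing of $K_{3,a}$ on the surface, where $a=|V_i\cap V_j\cap V_k|$. Since a convex surface has Euler genus zero, \cref{lem:genus-bipartite} with $g=0$ yields $a\le 2$, the desired bound.

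The only genuine subtlety I anticipate is confirming that this genus-zero bound really applies to all three topological types of convex surface---topological disk, sphere, and annulus. Each embeds in the sphere (the plane and open disk are a sphere minus a point, the open cylinder a sphere minus two points), so any crossing-free drawing of a finite graph on a convex surface is planar; the non-planarity of $K_{3,3}$, that is, the $g=0$ instance of \cref{lem:genus-bipartite}, then excludes $a\ge 3$. No geometric input beyond the convex-surface lemmas already in place is required.
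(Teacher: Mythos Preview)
Your proposal is correct and follows essentially the same approach as the paper, which simply invokes the proof of \cref{lem:Riemann-triples} verbatim together with the observation that a convex surface has Euler genus zero. Your extra remark about the three topological types embedding in the sphere is a harmless elaboration of that observation.
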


\begin{proof}
Same as \cref{lem:Riemann-triples}, noting that convex surfaces have Euler genus zero.
\end{proof}
\fi

\subsection{Erd\H{o}s--Anning theorems for convex surfaces}

\begin{theorem}
\label{thm:convex-triangle}
Let $s_1$, $s_2$, and $s_3$ be three points of a convex surface, with distance function $d$. Suppose also that there is no shortest curve containing all three of these points. Then the number of points that can have integer distances to all three of $s_1$, $s_2$, and $s_3$ is $O\bigl(d(s_1,s_2)\cdot d(s_1,s_3)\bigr)$.
\end{theorem}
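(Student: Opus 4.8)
The plan is to transcribe the proof of \cref{thm:triangle} almost verbatim, replacing its $K_{3,3}$-nonplanarity argument with the convex-surface triple-intersection lemma proved immediately above (the genus-zero analogue of \cref{lem:Riemann-triples}), which caps $|V_1\cap V_2\cap V_3|$ at two. First I would take an arbitrary point $p$ having integer distances to all three of $s_1$, $s_2$, and $s_3$, and form the additively weighted Voronoi diagram of these three sites with weights $w_i = d(s_1,p)-d(s_i,p)$. At $p$ all three weighted distances then equal $d(s_1,p)$, so all three sites are equal nearest neighbors and $p$ lies in the triple intersection $V_1\cap V_2\cap V_3$.

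Next I would show that each admissible $p$ is determined by a bounded amount of combinatorial data. Since every distance from $p$ is an integer, the weights $w_2$ and $w_3$ are integers, while $w_1=0$ identically; and because a convex surface is a geodesic metric space obeying the triangle inequality, the absolute values $|d(s_1,p)-d(s_2,p)|$ and $|d(s_1,p)-d(s_3,p)|$ are at most $d(s_1,s_2)$ and $d(s_1,s_3)$ respectively. Hence at most $O(d(s_1,s_2)\cdot d(s_1,s_3))$ distinct integer weight vectors can arise, giving at most this many distinct weighted Voronoi diagrams in which some integer-distance point could appear as a triple-intersection point.

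Finally I would invoke the convex-surface triple-intersection lemma to bound the contribution of each such diagram by two points, and multiply to obtain the claimed $O(d(s_1,s_2)\cdot d(s_1,s_3))$. The one step requiring care is confirming that the hypothesis of that lemma is met: it requires that none of the three sites lie on a shortest curve between the other two. This follows directly from the theorem's assumption that no shortest curve contains all three of $s_1$, $s_2$, and $s_3$, since any site lying on a shortest curve joining the other two would itself produce a shortest curve through all three points, contradicting that assumption. As every ingredient is a direct restatement of the corresponding step in \cref{thm:triangle}, with the genus-zero case of \cref{lem:Riemann-triples} supplying the crucial constant-sized bound on triple intersections, I do not expect any genuine obstacle beyond verifying this transfer of hypotheses.
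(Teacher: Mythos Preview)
Your proposal is correct and follows exactly the approach the paper takes: the paper's own proof simply reads ``Same as for \cref{thm:triangle} and \cref{thm:Riemann-triangle},'' and you have faithfully unpacked that reference, including the check that the theorem's hypothesis (no shortest curve through all three points) implies the lemma's hypothesis (no site lies on a shortest curve between the other two).
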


\begin{proof}
Same as for \cref{thm:triangle} and \cref{thm:Riemann-triangle}, substituting the properties of additive Voronoi diagrams for convex surfaces in place of the corresponding properties of the diagrams for convex distance functions and Riemannian manifolds.
\end{proof}

\begin{lemma}
Let $S$ be a set of three or more points of a convex surface such that each three points of~$S$ belong to a shortest curve.
Then $S$ is a subset of a geodesic, and every shortest curve between pairs of points in $S$ lies along this geodesic.
\end{lemma}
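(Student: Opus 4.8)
The plan is to follow the proof of \cref{lem:Riemann-geodesic} essentially verbatim, since that argument used nothing about the ambient space beyond two facts: that it is a geodesic metric space, and that a shortest curve cannot bend at an interior point (a curve that turns at an interior point can be strictly shortened in a neighborhood of the turn). Convex surfaces satisfy the first property by the compactness argument cited at the start of this section, and the bulk of the work is to confirm that they satisfy the second.

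First I would fix one point $s_1\in S$ and consider the collection of rays emanating from $s_1$ that contain shortest curves from $s_1$ to the other points of $S$. Suppose for a contradiction that two of these rays, $R_2$ and $R_3$ (containing shortest curves to $s_2$ and $s_3$), are distinct and non-opposite. By hypothesis $\{s_1,s_2,s_3\}$ lies on a single shortest curve, so one of the three points is an interior point of that curve. Each of the three possibilities forces a shortest curve to turn at that interior point: at $s_1$ it would have to join $R_2$ to $R_3$, while at $s_2$ or $s_3$ it would have to join the reverse of one ray to the shortest curve reaching the third point. In every case the turning occurs at an interior point of a purported shortest curve, which is impossible, so all rays from $s_1$ coincide or are opposite and $S$ lies on a single geodesic. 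Applying the same argument with a second base point then shows that every shortest curve between points of $S$ lies along this geodesic.

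The one place where the non-smoothness of a convex surface could interfere is this shortcut step, so the key observation to record is that the shortcut is only ever invoked at an \emph{interior} point of a shortest curve, and that such a point is necessarily locally Euclidean. Indeed, we established earlier that a cone point cannot be an interior point of any shortest curve; hence at each turning point we are working inside a locally Euclidean neighborhood, where the ordinary planar shortcut across a bend strictly shortens the curve. I expect this observation to be the main obstacle to verify carefully, but once it is in place the argument of \cref{lem:Riemann-geodesic} transfers without further change, and the Euler genus zero of convex surfaces plays no further role beyond what was already used.
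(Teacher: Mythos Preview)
Your proposal is correct and matches the paper's own proof, which simply says ``Same as for \cref{lem:Riemann-geodesic}.'' Your added care about the shortcut step---noting that the turning point is always an interior point of a shortest curve and hence not a cone point, so the local Euclidean shortcut applies---is exactly the observation the paper relies on implicitly (and spells out in the convex-surface analogue of \cref{lem:Riemann-interior}).
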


\begin{proof}
Same as for \cref{lem:Riemann-geodesic}, again substituting the properties of Voronoi diagrams on these surfaces. 
\end{proof}

\begin{theorem}
If a set $S$ of points of a convex surface has the property that all distances between points of $S$ are integers, then either $S$ is finite or all triples of points in $S$ are subsets of shortest curves along a single geodesic.
\end{theorem}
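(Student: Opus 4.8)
The plan is to mirror the two-step structure of \cref{thm:Riemann-finite}, combining the triple-point bound \cref{thm:convex-triangle} with the preceding geodesic lemma (the convex-surface analogue of \cref{lem:Riemann-geodesic}). Since the conclusion is a disjunction, I would assume $S$ is infinite and prove that, in that case, every triple of points of $S$ lies on a common shortest curve and that these curves all lie along a single geodesic.

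First I would establish, by contradiction, that every triple of points of $S$ lies on a common shortest curve. Suppose instead that some three points $s_1$, $s_2$, $s_3$ of $S$ have no shortest curve containing all three. These points then satisfy the hypothesis of \cref{thm:convex-triangle}, so the number of points having integer distances to all three of them is $O\bigl(d(s_1,s_2)\cdot d(s_1,s_3)\bigr)$, a finite quantity. But by assumption every point of $S$ has integer distances to each of $s_1$, $s_2$, $s_3$, so $S$ is contained in this finite set, contradicting the infinitude of $S$. Hence, when $S$ is infinite, each triple of its points lies on a common shortest curve.

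Having reached this point, I would invoke the preceding lemma, whose hypothesis --- that each three points of $S$ belong to a shortest curve --- is exactly what the first step supplies. Its conclusion gives that $S$ is a subset of a single geodesic and that every shortest curve between pairs of points of $S$ lies along this geodesic, which is precisely the statement that all triples of $S$ are subsets of shortest curves along a single geodesic. Combining the two cases yields the theorem.

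The work here is almost entirely inherited: the substantive content lives in the Voronoi-diagram machinery feeding \cref{thm:convex-triangle} (star-shapedness of cells, the ray and Voronoi-segment description of degenerate sites, the disjointness of non-degenerate cell interiors, and the Euler-genus-zero bound of two points on triple intersections via non-planarity of $K_{3,3}$) together with the geodesic lemma. The only points requiring care are bookkeeping ones: verifying that the hypothesis of \cref{thm:convex-triangle} is exactly the negation assumed in the contradiction step, and that the geodesic lemma's hypothesis coincides with the conclusion of the first step. I do not anticipate a genuine obstacle beyond confirming that these hypotheses match.
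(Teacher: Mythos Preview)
Your proposal is correct and follows essentially the same approach as the paper, which simply states that the proof is the same as for \cref{thm:Riemann-finite}: combine the triple-point finiteness bound (\cref{thm:convex-triangle}) with the convex-surface analogue of the geodesic lemma (\cref{lem:Riemann-geodesic}). Your contrapositive framing and the verification that the hypotheses line up are exactly what the paper's terse cross-reference is pointing to.
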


\begin{proof}
Same as for \cref{thm:Riemann-finite}, again substituting the properties of Voronoi diagrams on these surfaces.
\end{proof}

Our diameter bound seems unlikely to be tight. It uses the following preliminary lemmas.

\begin{figure}
\centering\includegraphics[width=0.5\textwidth]{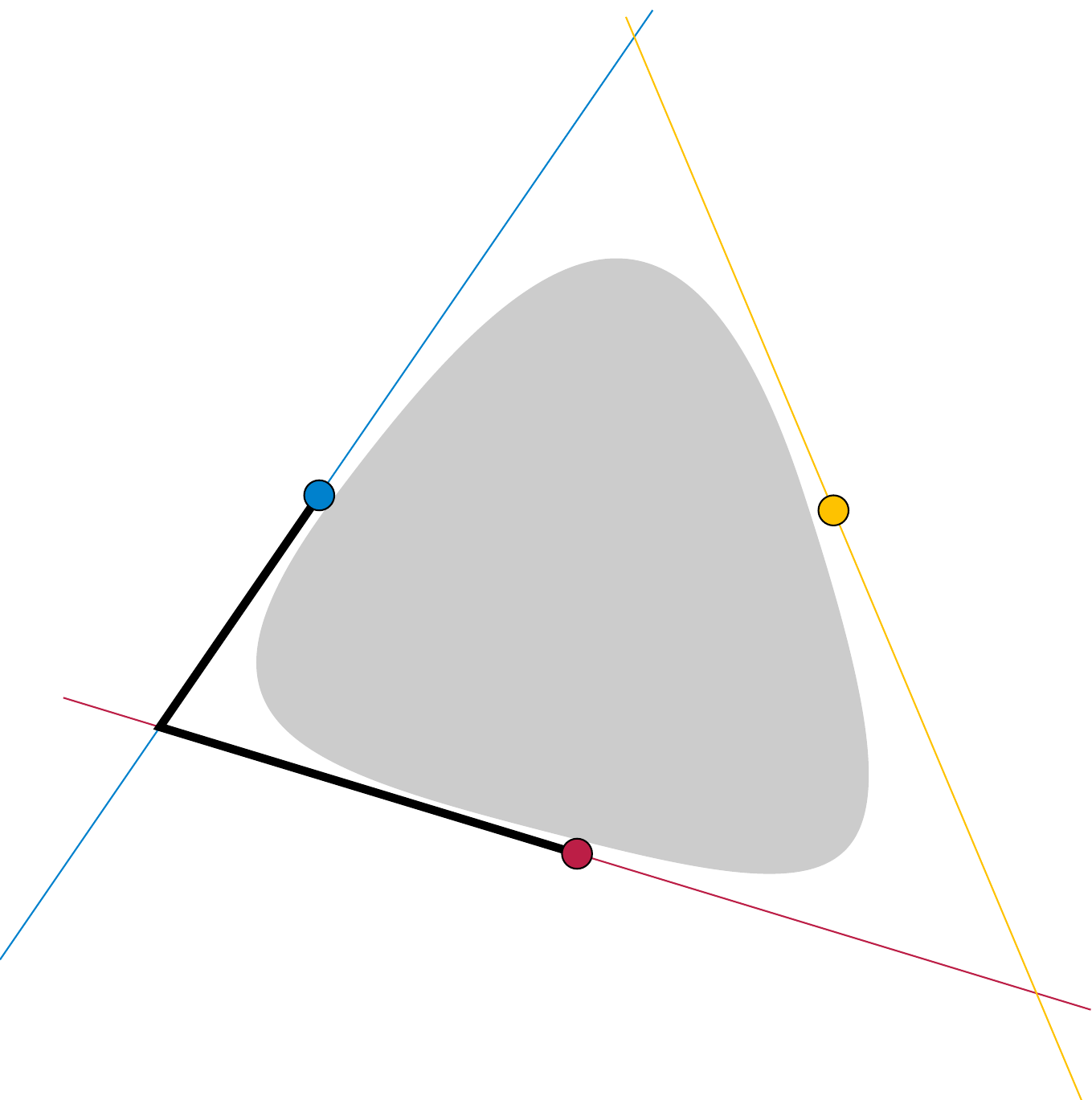}
\caption{\cref{lem:close-2-of-3}: a short path connecting two of three points separated by a convex set}
\label{fig:close-2-of-3}
\end{figure}

\begin{lemma}
\label{lem:close-2-of-3}
Suppose that three given points in $\mathbb{R}^d$ are disjoint from an open convex set $U$. Then two of the points can be connected by a curve disjoint from  $U$ of length at most twice their Euclidean distance.
\end{lemma}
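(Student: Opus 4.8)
The plan is to reduce to the plane and then bound an explicit detour around $U$. First I would restrict attention to the affine hull $P$ of the three points $A$, $B$, $C$; replacing $U$ by the convex set $U\cap P$ only shrinks the obstacle, and any curve drawn inside $P$ that avoids $U\cap P$ is automatically disjoint from $U$, while the Euclidean distance between a chosen pair is unchanged. So it suffices to treat the case $d=2$. If one of the three segments $AB$, $BC$, $CA$ is already disjoint from $U$, that segment is a curve whose length equals the Euclidean distance of its endpoints, well within the claimed bound, and we are done. The substantive case is when all three segments cross the open convex set $U$.

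In that case each edge meets $U$ in a relatively open subsegment, hence contributes a chord of $U$; in particular $U$ meets the interior of the triangle and contains a point $z$ there. I would measure obstacles by perpendicular clearance: for an edge $XY$, let $h^{+}$ and $h^{-}$ be the distances that $U$ protrudes beyond the line through $XY$ on its two sides. The key geometric estimate is that one can route around $U$ on whichever side is thinner by a curve of length at most $|XY| + 2\min(h^{+},h^{-})$. Concretely, the taut string from $X$ to $Y$ passing over that side --- the corresponding arc of the boundary of $\operatorname{conv}(U\cup\{X,Y\})$ --- automatically avoids the open set $U$, and for an $x$-monotone concave chain over a base its length is at most the base length plus the total variation of its height, namely $2$ times the clearance on that side.

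It then remains to produce an edge for which this detour beats $2|XY|$, i.e.\ for which $\min(h^{+},h^{-})\le \tfrac12|XY|$. Here I would use the third point together with convexity: because all three vertices lie outside $U$, the set $U$ cannot protrude more than half the edge length on \emph{both} sides of \emph{every} edge at once. Quantitatively, if it did, then combining the interior point $z\in U$ with points of $U$ protruding far beyond the two edge lines meeting at a common vertex, convexity of $U$ would force that vertex into $U$, contradicting the hypothesis. Choosing an edge that violates this ``thick on both sides'' condition yields the pair and the side for which the taut-string detour has length at most twice the Euclidean distance.

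The main obstacle is this last step together with the precise clearance estimate underlying it. The taut-string bound is clean only when the relevant boundary arc is monotone over the base; $U$ may protrude past the endpoints along the edge direction, and then the arc can wander, so the argument must either show the chosen edge avoids this overflow or absorb it into the bound. Equivalently, the real content is the extremal fact that a convex body crossed by all three sides of a triangle, with all three vertices outside it, must be ``fat'' relative to the triangle on at least one side --- this is what rules out the thin, far-reaching obstacles that would otherwise force an arbitrarily long detour, and it is exactly where the constant $2$ (rather than some larger factor) has to be extracted with care.
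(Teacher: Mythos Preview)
Your reduction to the plane is fine and matches the paper. After that, the paper takes a quite different and slicker route: rather than routing along the boundary of $\operatorname{conv}(U\cup\{X,Y\})$ and estimating perimeter, it separates each of the three points from $U$ by a \emph{line through that point} (possible because $U$ is open and convex). The three lines bound a triangular or unbounded three-sided region containing $U$, with each of $A$, $B$, $C$ lying on one of its sides. Some interior angle of this region is at least $60^\circ$; the two-segment path from one point along its line to the vertex at that angle, then along the other line to the second point, avoids $U$ by construction, and an elementary law-of-cosines computation shows that when the included angle is at least $60^\circ$ this broken path has length at most twice the straight-line distance (equality in the equilateral case the paper cites). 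No convex-hull perimeters or clearance estimates enter.

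Your approach has a real gap that you correctly flag but do not close. The taut-string bound $|XY| + 2\min(h^{+},h^{-})$ genuinely fails when $U$ overhangs the endpoints $X$ or $Y$ in the edge direction, and in the present situation nothing prevents this: a convex body meeting all three open edges of triangle $ABC$ can still protrude past a vertex. More seriously, the extremal claim that ``thick on both sides of every edge would force a vertex into $U$'' is not established; the sketch combining an interior point $z$ with far-protruding points does not yield the constant $\tfrac12$ needed to land at the factor $2$, and it is unclear that it can without essentially reinventing the angle argument. The separating-line proof sidesteps both difficulties at once, because the three lines hand you, for free, two straight segments known to avoid $U$ and meeting at a controlled angle---so the only estimate needed is the triangle inequality sharpened by the angle bound.
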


\begin{proof}
Consider the plane containing the three points; if $U$ does not intersect this plane then the points can be connected directly. Within this plane draw a line through each point, disjoint from $U$. The three lines form a triangle or unbounded three-sided region containing~$U$; if all three points lie its sides, one of its internal angles is $\ge 60^\circ$, and the curve connecting two points through this angle has length at most twice the distance between the points (\cref{fig:close-2-of-3}; worst case: $U$ is an equilateral triangle and the three points are its edge midpoints). If one of the three points is not on a side of the region containing $U$, it can see the entire line through another of the points, and be connected directly to that point.
\end{proof}

\begin{lemma}
\label{lem:convex-close}
If $n$ points on a convex surface have diameter $D$ then some two of the points have geodesic distance $O(D/\sqrt{n})$.
\end{lemma}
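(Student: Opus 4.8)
We must show that among $n$ points of diameter $D$ on a convex surface, some two are at distance $O(D/\sqrt{n})$.

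The plan is to use an area–packing argument driven by a single monotonicity fact. Write $\delta=\min_{i\ne j}d(s_i,s_j)$ for the minimum pairwise distance among the $n$ points $s_1,\dots,s_n$; it suffices to prove $\delta=O(D/\sqrt{n})$. Since any two centers are at distance at least $\delta$, the metric balls $B(s_i,\delta/2)$ are pairwise disjoint, and because every point lies within distance $D$ of $s_1$ (the diameter is $D$) while $\delta\le D$, every such ball is contained in $B(s_1,2D)$. If I can show that each small ball carries area at least a fixed fraction $\Omega\bigl((\delta/D)^2\bigr)$ of the area of the containing ball, then disjointness forces $n\cdot\Omega\bigl((\delta/D)^2\bigr)\le O(1)$, which is exactly the desired bound.

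The tool that supplies this area lower bound is Bishop--Gromov relative volume comparison. A convex surface, including its non-smooth and degenerate cases, is an Alexandrov space of curvature bounded below by $0$: this is precisely the content, used throughout this section, that every point is locally Euclidean or a cone point of \emph{positive} angular deficit, never of negative deficit. For such a space the ratio $r\mapsto\operatorname{area}\bigl(B(p,r)\bigr)/r^2$ is non-increasing in $r$ at every basepoint $p$. Applying this monotonicity at each center $s_i$ for the radii $\delta/2\le 2D$ gives
\[
\operatorname{area}\bigl(B(s_i,\delta/2)\bigr)\ \ge\ \frac{(\delta/2)^2}{(2D)^2}\,\operatorname{area}\bigl(B(s_i,2D)\bigr),
\]
and since $B(s_i,2D)\supseteq B(s_1,D)$ (any point within $D$ of $s_1$ is within $2D$ of $s_i$), the right-hand area is at least $a:=\operatorname{area}\bigl(B(s_1,D)\bigr)$, which is positive and finite for a bounded metric ball. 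Applying the same monotonicity at $s_1$ for the radii $D\le 2D$ bounds the containing ball from above, $\operatorname{area}\bigl(B(s_1,2D)\bigr)\le 4a$.

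Combining these, the disjoint balls satisfy
\[
n\cdot\frac{(\delta/2)^2}{(2D)^2}\,a\ \le\ \sum_{i}\operatorname{area}\bigl(B(s_i,\delta/2)\bigr)\ \le\ \operatorname{area}\bigl(B(s_1,2D)\bigr)\ \le\ 4a,
\]
so the factor $a$ cancels and $n\delta^2=O(D^2)$, giving $\delta=O(D/\sqrt{n})$. The step I expect to be the main obstacle is exactly the area lower bound for the small balls: near a sharp cone point a ball of radius $r$ can have area as small as $\tfrac12\theta r^2$ with cone angle $\theta$ arbitrarily small, so no naive uniform estimate $\operatorname{area}(B(p,r))\ge cr^2$ is available. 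Relative volume comparison sidesteps this because it measures each small ball against a large ball at the \emph{same} center, so the unknown local cone angle cancels; the only features of convex surfaces it uses are nonnegative Alexandrov curvature (for the monotonicity) and the positivity and finiteness of $a$. The localized form above is what is required for unbounded convex surfaces such as the plane or the cylinder, whose total area is infinite; for a bounded (sphere-topology) convex surface one may instead take the basepoint ball to be the whole surface and obtain the cleaner inequality $n(\delta/2)^2\le D^2$ directly.
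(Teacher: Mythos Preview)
Your argument is correct and takes a genuinely different route from the paper's. The paper argues extrinsically: it encloses the points in a ball in $\mathbb{R}^3$, overlays a cubical grid of side $O(D/\sqrt n)$, observes that any cube meeting the surface lies within $O(1)$ layers of the boundary of the discretized convex body, counts $O(n)$ such boundary cubes, and then uses the pigeonhole principle to put three points in one cube; finally it invokes the preceding lemma (\cref{lem:close-2-of-3}) to connect two of those three points by a short curve avoiding the interior of the convex body. Your argument is intrinsic: it treats the surface as a two-dimensional Alexandrov space of nonnegative curvature and applies Bishop--Gromov relative volume comparison to run a standard ball-packing count, with the nice feature that comparing each small ball to a large ball at the same center makes the unknown cone angle cancel.

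What each buys: the paper's approach is more self-contained (it needs only elementary convex geometry and the auxiliary \cref{lem:close-2-of-3}, not the Burago--Gromov--Perelman machinery), and it sits naturally with the paper's concrete definition of a convex surface as a boundary in $\mathbb{R}^3$. Your approach is cleaner, avoids the auxiliary lemma entirely, and immediately generalizes to any two-dimensional Alexandrov space with curvature $\ge 0$, not just boundaries of Euclidean convex bodies. The one point you might tighten is the justification that such a surface has Alexandrov curvature $\ge 0$: the local ``Euclidean or positive-deficit cone'' description quoted from the paper is suggestive but not by itself a proof of the global triangle-comparison condition; this is, however, a classical theorem of Alexandrov for boundaries of convex bodies, so citing it would suffice.
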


\begin{proof}
Let the set of points be $S$, and let the convex surface be the boundary of a set $K$ (or the double cover of a set $K$), embedded in $\mathbb{R}^3$. By assumption, $S$ lies within a ball $B$ of radius~$D$ in $\mathbb{R}^3$. Subdivide the ball by a grid of cubes, of side length $O(D/\sqrt{n})$, positioned so that none of the given points lies on a cube boundary, and let $\Box$ denote the set of cubes in this grid whose intersection with $B\cap K$ is non-empty and includes at least one non-vertex point of the cube. Each point in $S$ is contained in at least one cube $C\in\Box$. This cube $C$ must touch (at least at a corner) a grid cube that is outside $\Box$, because if all eight cubes corner-adjacent to $C$ belonged to $\Box$, the intersection points of these eight cubes with $B\cap K$ would have a convex hull that entirely encloses~$C$, preventing $C$ from containing any boundary points of~$K$. Therefore, $C$ is within $O(1)$ grid positions of a boundary square of the union of cubes $\cup\Box$. But $\cup\Box$ is orthogonally convex -- any axis-parallel line can cross only two boundary squares (one in each direction) -- so it has $O(n)$ boundary squares. Therefore there are also $O(n)$ cubes near these squares that contain points of $S$. By choosing the grid size appropriately, we can adjust the constant factors in this $O(n)$ bound and ensure that fewer than $n/2$ cubes contain points of $S$. With this adjustment, some cube $C\in\Box$, of side length $O(D/\sqrt{n})$, contains at least three points of $S$. We consider two cases:
\begin{itemize}
\item If the convex surface is non-degenerate (it bounds a convex set with non-empty interior),  then by \cref{lem:close-2-of-3}, two of these three points are connected by a path in space, disjoint from the interior of the given convex set, of length $O(D/\sqrt{n})$, and hence are at geodesic distance $O(D/\sqrt{n})$ on the convex surface.
\item Otherwise, the convex surface is a double-covered two-dimensional convex set, two of the three points are on the same side of the double cover, and these two points can be connected directly within $C$ by a geodesic segment of length $O(D/\sqrt{n})$.
\end{itemize}
In either case, we have two points at geodesic distance $O(D/\sqrt{n})$.
\end{proof}

The example of a square grid of $\sqrt{n}\times\sqrt{n}$ points on a flat plane, scaled to have diameter~$D$, shows that \cref{lem:convex-close} is tight.

\begin{theorem}
\label{thm:convex-diameter}
If a set $S$ of points of a convex surface has integer distances and diameter $D$, then $|S|=O(D^{4/3})$.
\end{theorem}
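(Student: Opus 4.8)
The plan is to combine the triangle bound of \cref{thm:convex-triangle} with the close-pair estimate of \cref{lem:convex-close}, after first disposing of the degenerate case in which $S$ lies on a single geodesic. Write $n=|S|$ and assume $n$ exceeds any fixed constant. If every triple of points of $S$ lies on a common shortest curve, then by the convex-surface analogue of \cref{lem:Riemann-geodesic} all of $S$ lies along a single geodesic, and the geodesic case of the argument in \cref{thm:Riemann-diameter} bounds $|S|$ by $O(D)=O(D^{4/3})$ (using $D\ge 1$). So I may assume henceforth that $S$ is not contained in a single geodesic.

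In the remaining case, the main step is to exhibit three points, one of whose sides is short, to which \cref{thm:convex-triangle} applies. By \cref{lem:convex-close} there are two points $s_1,s_2\in S$ with $d(s_1,s_2)=O(D/\sqrt n)$; since all distances are positive integers, this already forces $D/\sqrt n=\Omega(1)$, giving the crude bound $n=O(D^2)$ that I will now sharpen. Suppose (see the next paragraph) that there is a third point $s_3\in S$ for which no single shortest curve contains all of $s_1$, $s_2$, $s_3$. The remaining side is then controlled trivially by the diameter, $d(s_1,s_3)\le D$, and \cref{thm:convex-triangle} gives
\[
n=O\bigl(d(s_1,s_2)\cdot d(s_1,s_3)\bigr)=O\!\left(\frac{D}{\sqrt n}\cdot D\right)=O\!\left(\frac{D^2}{\sqrt n}\right),
\]
so $n^{3/2}=O(D^2)$ and hence $n=O(D^{4/3})$, as desired. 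This is weaker than the $O(D)$ bound for convex distance functions precisely because \cref{lem:convex-close} supplies only one short side rather than a small triangle with two short sides.

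The step I expect to require the most care is securing the third point $s_3$. Since $S$ is not contained in a single geodesic, fix any non-collinear triple $a,b,c$ (no common shortest curve) and consider the three triples $\{s_1,s_2,a\}$, $\{s_1,s_2,b\}$, $\{s_1,s_2,c\}$. If all three were collinear, then each of $a,b,c$ would lie on a geodesic extending a shortest $s_1$--$s_2$ curve; if that shortest curve is \emph{unique}, then $a$, $b$, $c$, $s_1$, $s_2$ would all lie on the single maximal geodesic through $s_1$ and $s_2$, contradicting the non-collinearity of $a,b,c$. Hence at least one of $a,b,c$ serves as $s_3$. The delicate point is exactly the uniqueness of the shortest $s_1$--$s_2$ curve: two distinct such curves bound a geodesic bigon that, by Gauss--Bonnet, encloses positive total curvature equal to the sum of its two vertex angles, so on a (non-negatively curved) convex surface this can occur only when a concentration of curvature — a cone point — lies within distance $O(d(s_1,s_2))$ of the pair. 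I would rule this out or absorb it by treating short pairs near the finitely many cone points separately, observing that even then the collinear partners are confined to a bounded family of geodesics through $s_1,s_2$, after which the same counting applies. Once $s_3$ is obtained, the displayed computation completes the proof.
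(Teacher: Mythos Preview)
Your overall strategy---find a close pair via \cref{lem:convex-close} and feed it to \cref{thm:convex-triangle}---matches the paper exactly. The divergence is in how you secure the third point $s_3$, and there the proposal has a genuine gap.

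First, a minor logical slip: after disposing of the case ``every triple lies on a common shortest curve'', the remaining case is ``some triple is not on any common shortest curve'', not ``$S$ is not contained in a single geodesic''. Three equally spaced points on an equator lie on one geodesic yet on no common shortest curve. This is easily patched (you only need a triple off a shortest curve, which you already have), but the real problem is downstream.

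The substantive gap is your contradiction. You argue that if each of $\{s_1,s_2,a\}$, $\{s_1,s_2,b\}$, $\{s_1,s_2,c\}$ lies on a shortest curve and the $s_1s_2$ shortest curve is unique, then $a,b,c$ all lie on the maximal geodesic through $s_1$ and $s_2$, ``contradicting the non-collinearity of $a,b,c$''. But your ``non-collinearity'' only means that $a,b,c$ share no \emph{shortest curve}; sharing a geodesic is strictly weaker and gives no contradiction. So the argument does not force the existence of $s_3$.

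The paper sidesteps this entirely by reversing the case split: either some $r\in S$ fails to lie on a shortest curve with $s_1,s_2$ (take $s_3=r$ and run your displayed computation), or \emph{every} $r\in S$ does. In the latter case, one argues directly that all of $S$ sits on the two rays extending a unique shortest $s_1s_2$ curve, giving $|S|=O(D)$. Uniqueness here needs no Gauss--Bonnet or cone-point bookkeeping: if there were two shortest curves from $s_1$ to $s_2$, they arrive at $s_2$ along distinct tangent directions; splicing the ``wrong'' one onto the continuation $s_2\to r$ produces a curve of the same length that bends at $s_2$, hence can be shortcut---contradiction. This local argument is both simpler and complete, and it is exactly what is missing from your proposal.
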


\begin{proof}
Let $n$ be the size of $S$, and let its two closest such points be $p$ and $q$, with distance $O(D/\sqrt{n})$ by \cref{lem:convex-close}.  We distinguish two cases:
\begin{itemize}
\item Suppose that all triples of points in $S$ that include both $p$ and $q$ belong to shortest curves. Each point in $S$ must belong either to a ray from $p$ through $q$ or a ray from $q$ through $p$, with the ray containing a shortest curve from $p$ to $q$.
Further, this shortest curve (and hence these two rays) must be unique, for if there were two shortest curves between $p$ and $q$, one of them could be combined with part of a shortest curve from $p$ or $q$ to a third point $r$ to form a shortest curve that passes through $q$ or $p$ at a non-straight angle, an impossibility. Thus, two rays contain all remaining points in $S$, and these points lie at integer spacing along these two rays, giving $O(D)$ points in $S$ in total.
\item Otherwise, some point $r$ does not lie on a shortest curve with $p$ and $q$. By \cref{thm:convex-triangle}, there can be $O(D^2/\sqrt{n})$ points in $S$. That is, $n=O(D^2/\sqrt{n})$. Multiplying both sides by $\sqrt{n}$ and taking the $2/3$ power gives the result.\qedhere
\end{itemize}
\end{proof}

\section{Discussion}

We have shown that integer-distance point sets are finite or collinear in three broad classes of metric spaces each generalizing Euclidean distance in the plane. The next natural direction to look for a further extension of these results is in higher dimensions. Erd\H{o}s and Anning state without proof that their results extend to $d$-dimensional Euclidean space~\cite{AnnErd-BAMS-45}. However, the obvious attempt to generalize our proof to higher dimensions, by using intersection patterns of $(d+1)$-tuples of Voronoi cells, does not work for convex distance functions: there exist three-dimensional convex distance functions for which some 4-tuple of points is equidistant from arbitrarily many other points~\cite{IckKleLe-FI-95}. For Riemannian manifolds, the theorem does not generalize, as \cref{ex:high-genus} shows. Nevertheless, it would be interesting to study similar questions for higher-dimensional uniform metrics, such as hyperbolic space. It would also be of interest to strengthen the bound on diameter in \cref{thm:Riemann-diameter} and \cref{thm:convex-diameter}.

\bibliographystyle{plainurl}
\bibliography{diophantine}

\end{document}